\newtheorem{theorem}{Theorem}
\newtheorem{lemma}[theorem]{Lemma}
\newtheorem{corollary}[theorem]{Corollary}
\newtheorem{proposition}[theorem]{Proposition}
\font\sc=rsfs10
\newcommand{\cC}{\sc\mbox{C}\hspace{1.0pt}}
\newcommand{\cM}{\sc\mbox{M}\hspace{1.0pt}}
\newcommand{\cS}{\sc\mbox{S}\hspace{1.0pt}}
\newcommand{\cP}{\sc\mbox{P}\hspace{1.0pt}}
\newcommand{\cA}{\sc\mbox{A}\hspace{1.0pt}}
\font\scc=rsfs7
\newcommand{\ccC}{\scc\mbox{C}\hspace{1.0pt}}
\newcommand{\ccA}{\scc\mbox{A}\hspace{1.0pt}}
\begin{document}

\title[Morita theory for finitary $2$-categories]{Morita theory for finitary $2$-categories}
\author{Volodymyr Mazorchuk and Vanessa Miemietz}
%\date{\today}

\begin{abstract}
We develop Morita theory for finitary additive $2$-representations of finitary $2$-categories.
As an application we describe Morita equivalence classes for $2$-categories of projective
functors associated to finite dimensional algebras and for $2$-categories of Soergel bimodules.
\end{abstract}

\maketitle

\section{Introduction}\label{s0}

Classical Morita theory (see \cite{Mo}) describes when two rings have equivalent categories of representations. 
By  now there are many generalizations of this theory by varying what is represented and where it is represented
(see e.g. \cite{Ri,Ke,Kn,BV,BD,To}). 

In \cite{MM,MM2,MM3}, motivated by recent success of higher categorical methods in both topology 
(see e.g. \cite{Kh,St}) and representation theory (see e.g. \cite{Ar,Gr,LLT,CR}), we started a systematic study 
of the $2$-representation theory of finitary $2$-categories. The latter should be thought of as $2$-analogues
of finite dimensional algebras. Assuming the existence of adjunction morphisms (which should be thought of as $2$-analogues of an involution on a finite dimensional algebra) we
constructed certain natural classes of $2$-representations and established, under some natural conditions, 
a $2$-analogue of Schur's lemma as well as analogues of other criteria for simplicity of a representation.
In this article we drop the assumption on existence of adjunction morphisms and study Morita theory for arbitrary
finitary $2$-categories.

In view of the different known versions of  Morita theory (as in \cite{Ri,Ke,Kn,BV,BD,To}), our main results are
as expected. Each finitary $2$-category is biequivalent to the (opposite of the) endomorphism $2$-category of 
its representable (principal) $2$-representations. Our main result asserts that Morita equivalent 
finitary $2$-categories can be obtained one from the other by taking the (opposite of the)  endomorphism category
of a suitable ``projective generator'' or, in other words, by adding and/or removing retracts of 
principal $2$-representations. Along the way we obtain a full classification of all retracts of
principal $2$-representations and show that indecomposable retracts are given by indecomposable $1$-morphisms
which square to an idempotent (the latter might decompose). In line with the common phenomenon that 
``useful'' categorifications of semisimple algebras and categories are usually not semisimple (see e.g. \cite{CR,St}), 
retracts of projective $2$-representations do not necessarily split off as direct summands.

As an application we describe Morita equivalence classes for finitary $2$-categories of projective 
functors associated to finite dimensional algebras. This is an important class of finitary $2$-categories which
originally appeared in \cite{MM}, and which later, in \cite{MM3}, played the role of an important prototype of 
a certain class of ``simple''
finitary $2$-categories appearing in an analogue of the Artin-Wedderburn Theorem. As it turns out, the 
classification is not obvious and Morita equivalence classes are described in terms of adding/removing 
semi-simple direct summands (under some additional restrictions). We also show that for $2$-categories of 
Soergel bimodules the Morita equivalence classes correspond to isomorphism classes of 
Coxeter systems.

The paper is organized as follows: In Section~\ref{s1} we recall the classical Morita theory for
finitary $\Bbbk$-linear categories. Section~\ref{s2} is a brief introduction to the $2$-representation theory
of finitary $2$-categories. In Section~\ref{s4} we define and classify projective $2$-representations.
Although our main techniques come from category theory (see \cite{BD}), we also use the classical combinatorial
description of idempotent matrices with non-negative integer coefficients from \cite{Fl}.
In Section~\ref{s3} we prove our main result. We complete the paper with some examples and applications
in Section~\ref{s7}.

\vspace{5mm}

\noindent
{\bf Acknowledgment.} A substantial part of the paper was written during a visit of the second author to 
Uppsala University, whose  hospitality is gratefully acknowledged. The visit was supported by the Swedish 
Research Council. The first author is partially supported by the 
Swedish Research Council and the Royal Swedish Academy of Sciences. The second author is partially 
supported by ERC grant PERG07-GA-2010-268109 and by EPSRC grant EP/K011782/1. We are really grateful to the
referees for very careful reading of the paper, for pointing out several subtle inaccuracies in the
original version and for many very helpful suggestions.

\section{Morita theory for finitary $\Bbbk$-linear categories}\label{s1}

In this section we present some well-known results from classical Morita theory for finite dimensional
associative algebras in a form that resembles our subsequent treatment of Morita theory for finitary $2$-categories.
We denote by $\mathbb{N}$ the set of positive integers.

\subsection{Finitary $\Bbbk$-linear categories}\label{s1.1}

Throughout the article, let $\Bbbk$ be an algebraically closed field. A $\Bbbk$-linear  category 
is a category enriched over the category $\Bbbk\text{-}\mathrm{Mod}$ of $\Bbbk$-vector spaces, meaning that morphisms form $\Bbbk$-vector spaces and
composition of morphisms is $\Bbbk$-bilinear. A $\Bbbk$-linear category $\mathcal{C}$ is called {\em finitary} provided
that
\begin{itemize}
\item $\mathcal{C}$ is skeletally finite, that is it has finitely many isomorphism classes of objects;
\item all morphism spaces in $\mathcal{C}$ are finite dimensional (over $\Bbbk$).
\end{itemize}
A finitary $\Bbbk$-linear category $\mathcal{C}$ is called {\em reduced} if we additionally have
\begin{itemize}
\item for any $\mathtt{i}\in \mathcal{C}$ the (finite dimensional unital) $\Bbbk$-algebra $\mathcal{C}(\mathtt{i},\mathtt{i})$
is local.
\end{itemize}

\subsection{Representations of finitary $\Bbbk$-linear categories}\label{s1.2}

Let $\mathcal{C}$ be a finitary $\Bbbk$-li\-ne\-ar category. 
By a representation of $\mathcal{C}$ we mean a  $\Bbbk$-linear
functor $\mathsf{M}:\mathcal{C}\to \Bbbk\text{-}\mathrm{Mod}$. A {\em finitary} representation of $\mathcal{C}$ is a 
$\Bbbk$-linear functor $\mathsf{M}:\mathcal{C}\to \Bbbk\text{-}\mathrm{mod}$, where $\Bbbk\text{-}\mathrm{mod}$ denotes the category of finite-dimensional $\Bbbk$-vector spaces. Given  two representations 
$\mathsf{M}$ and $\mathsf{N}$ of $\mathcal{C}$, a morphism $\alpha:\mathsf{M}\to \mathsf{N}$ is just a 
natural transformation from  $\mathsf{M}$ to $\mathsf{N}$. All (finitary) representations of $\mathcal{C}$ together
with morphisms between them form an abelian category denoted $\mathcal{C}\text{-}\mathrm{Mod}$ (resp. $\mathcal{C}\text{-}\mathrm{mod}$).

One important example of a representation of $\mathcal{C}$ is given by 
the representable functor $\mathcal{C}(\mathtt{i},{}_-)$ 
for $\mathtt{i}\in \mathcal{C}$. For $\mathsf{M}\in \mathcal{C}\text{-}\mathrm{Mod}$ we have the Yoneda isomorphism
\begin{displaymath}
\mathrm{Hom}_{\mathcal{C}}\big(\mathcal{C}(\mathtt{i},{}_-),\mathsf{M}\big)\cong \mathsf{M}(\mathtt{i})
\end{displaymath}
given by $\varphi\mapsto \varphi(\mathbbm{1}_{\mathtt{i}})$. In particular, as 
$\mathsf{M}\mapsto \mathsf{M}(\mathtt{i})$ is exact, it follows that the representation
$\mathcal{C}(\mathtt{i},{}_-)$ is projective. We denote by $\mathcal{C}\text{-}\mathrm{proj}$
the full additive subcategory of $\mathcal{C}\text{-}\mathrm{mod}$ consisting of all projective 
$\mathcal{C}$-modules and note that $\mathcal{C}\text{-}\mathrm{proj}$ coincides with the full additive
closure (by which we mean closure under direct sums, direct summands, and isomorphisms) 
of the $\mathcal{C}(\mathtt{i},{}_-)$ for $\mathtt{i}\in \mathcal{C}$. Further, any indecomposable
projective $\mathcal{C}$-module is isomorphic to $\mathcal{C}(\mathtt{i},{}_-)\cdot e$, where 
$e$ is an indecomposable idempotent in $\mathcal{C}(\mathtt{i},\mathtt{i})$ (the latter being canonically isomorphic to 
$\mathrm{End}_{\mathcal{C}}(\mathcal{C}(\mathtt{i},{}_-))^{\mathrm{op}}$).

A progenerator for $\mathcal{C}\text{-}\mathrm{mod}$ is a full subcategory $\mathcal{P}$ in
$\mathcal{C}\text{-}\mathrm{proj}$ such that the additive closure of $\mathcal{P}$ coincides with
$\mathcal{C}\text{-}\mathrm{proj}$. 

\subsection{Morita theorem}\label{s1.3}

For reduced $\Bbbk$-linear categories the Morita theorem looks as follows:

\begin{proposition}\label{prop102}
Let $\mathcal{A}$ and $\mathcal{C}$ be two reduced finitary $\Bbbk$-linear categories. Then the categories 
$\mathcal{A}\text{-}\mathrm{mod}$ and $\mathcal{C}\text{-}\mathrm{mod}$ are equivalent if and only if 
the categories $\mathcal{A}$ and $\mathcal{C}$ are equivalent.
\end{proposition}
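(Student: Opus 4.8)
The plan is to prove the two implications separately, with essentially all the content in the ``only if'' direction. The ``if'' direction is routine: given an equivalence $F\colon\mathcal{A}\to\mathcal{C}$ with quasi-inverse $G$, precomposition sends a $\Bbbk$-linear functor $\mathsf{M}\colon\mathcal{C}\to\Bbbk\text{-}\mathrm{mod}$ to $\mathsf{M}\circ G\colon\mathcal{A}\to\Bbbk\text{-}\mathrm{mod}$, and one checks that this yields an equivalence $\mathcal{C}\text{-}\mathrm{mod}\xrightarrow{\sim}\mathcal{A}\text{-}\mathrm{mod}$ (a quasi-inverse being precomposition with $F$, with the unit and counit induced from those of $F,G$). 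I would simply record this.

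For the ``only if'' direction the strategy is to reconstruct each category from its module category through the projectives and the Yoneda isomorphism. First I would note that the Yoneda embedding $\mathtt{i}\mapsto\mathcal{A}(\mathtt{i},{}_-)$ identifies $\mathcal{A}^{\mathrm{op}}$ with the full subcategory $\mathcal{A}^{\mathrm{rep}}$ of $\mathcal{A}\text{-}\mathrm{mod}$ on the representable functors: the displayed Yoneda isomorphism, applied to $\mathsf{M}=\mathcal{A}(\mathtt{j},{}_-)$, gives $\mathrm{Hom}_{\mathcal{A}}(\mathcal{A}(\mathtt{i},{}_-),\mathcal{A}(\mathtt{j},{}_-))\cong\mathcal{A}(\mathtt{j},\mathtt{i})$ compatibly with composition, so this embedding is full, faithful, and injective on isomorphism classes of objects; the same holds for $\mathcal{C}$. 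It therefore suffices to extract an equivalence $\mathcal{A}^{\mathrm{rep}}\simeq\mathcal{C}^{\mathrm{rep}}$ from a given equivalence $\Phi\colon\mathcal{A}\text{-}\mathrm{mod}\to\mathcal{C}\text{-}\mathrm{mod}$, since taking opposites then produces the chain $\mathcal{A}\simeq(\mathcal{A}^{\mathrm{rep}})^{\mathrm{op}}\simeq(\mathcal{C}^{\mathrm{rep}})^{\mathrm{op}}\simeq\mathcal{C}$.

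To obtain this I would argue that $\Phi$ restricts to an equivalence on projectives. An equivalence of abelian categories preserves exact sequences and Hom-spaces, so $\mathrm{Hom}(\Phi P,{}_-)$ is exact precisely when $\mathrm{Hom}(P,{}_-)$ is; hence $\Phi$ carries $\mathcal{A}\text{-}\mathrm{proj}$ into $\mathcal{C}\text{-}\mathrm{proj}$ and restricts to an equivalence between them. Both of these additive categories are idempotent complete with finite-dimensional morphism spaces, hence Krull--Schmidt, so $\Phi$ sends indecomposables to indecomposables and induces a bijection on isomorphism classes of indecomposable projectives. Here reducedness enters decisively: when $\mathcal{A}(\mathtt{i},\mathtt{i})$ is local, its only idempotents are $0$ and $\mathbbm{1}_{\mathtt{i}}$, so the description of indecomposable projectives recalled before the proposition collapses to $\mathcal{A}(\mathtt{i},{}_-)\cdot\mathbbm{1}_{\mathtt{i}}=\mathcal{A}(\mathtt{i},{}_-)$; that is, the representables are up to isomorphism exactly the indecomposable projectives, and likewise for $\mathcal{C}$. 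Consequently the bijection on indecomposables matches $\mathcal{A}^{\mathrm{rep}}$ with $\mathcal{C}^{\mathrm{rep}}$, and $\Phi$ restricts to an equivalence $\mathcal{A}^{\mathrm{rep}}\simeq\mathcal{C}^{\mathrm{rep}}$, closing the chain above.

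The main obstacle is exactly the step invoking reducedness. Without the locality hypothesis, an equivalence of module categories only matches the additive closures $\mathcal{A}\text{-}\mathrm{proj}\simeq\mathcal{C}\text{-}\mathrm{proj}$, i.e. it produces a Morita equivalence rather than an equivalence of the categories themselves: the representables need no longer be indecomposable, and a progenerator may be a proper full subcategory of the projectives. Reducedness is precisely the condition forcing the canonical progenerator given by the representables to exhaust the indecomposable projectives, which is what upgrades Morita equivalence to an honest equivalence $\mathcal{A}\simeq\mathcal{C}$. I would also note in passing that $\Phi$ need only be assumed to be an equivalence of additive (not necessarily abelian) categories if one prefers, since all that is used is its restriction to projectives together with the Yoneda identification.
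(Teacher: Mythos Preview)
Your proof is correct and follows essentially the same approach as the paper: both directions match, with the ``if'' direction handled by precomposition and the ``only if'' direction by restricting the equivalence to projectives, invoking Yoneda to identify $\mathcal{A}^{\mathrm{op}}$ with the representables, and using reducedness to pin down the representables as precisely the indecomposable projectives. The only cosmetic difference is that the paper phrases the last step via ``multiplicity free additive generators'' of $\mathcal{A}\text{-}\mathrm{proj}$ and $\mathcal{C}\text{-}\mathrm{proj}$ (working with skeleta), whereas you invoke Krull--Schmidt and the bijection on indecomposables directly; these are the same argument in slightly different clothing.
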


\begin{proof}
Let $\Phi:\mathcal{A}\to \mathcal{C}$ be an equivalence with inverse $\Psi:\mathcal{C}\to \mathcal{A}$. These functors give rise to 
functors
\begin{displaymath}
{}_-\circ\Phi: \mathcal{C}\text{-}\mathrm{mod}\to \mathcal{A}\text{-}\mathrm{mod}\quad\text{ and }\quad 
{}_-\circ\Psi: \mathcal{A}\text{-}\mathrm{mod}\to \mathcal{C}\text{-}\mathrm{mod} .
\end{displaymath}
These latter functors define inverse equivalences as required.

Conversely, let $\Phi: \mathcal{C}\text{-}\mathrm{mod}\to \mathcal{A}\text{-}\mathrm{mod}$
and $\Psi: \mathcal{A}\text{-}\mathrm{mod}\to \mathcal{C}\text{-}\mathrm{mod}$ be mutually inverse equivalences.
They restrict to equivalences between $\mathcal{C}\text{-}\mathrm{proj}$ and $\mathcal{A}\text{-}\mathrm{proj}$.

Let $\mathtt{i}_1,\mathtt{i}_2,\dots,\mathtt{i}_k$ be a fixed set of representatives of the isomorphism classes
of objects in $\mathcal{C}$ and let $\mathcal{C}'$ be the full subcategory of $\mathcal{C}$ consisting of
these objects. Then $\mathcal{C}'$ is a skeleton of $\mathcal{C}$ and hence is equivalent to $\mathcal{C}$.
Let $\mathcal{X}_{\mathcal{C}}$ be the full subcategory of $\mathcal{C}\text{-}\mathrm{proj}$ with objects
$\mathcal{C}(\mathtt{i}_t,{}_-)$ for $t=1,2,\dots,k$. The Yoneda isomorphism gives an isomorphism of categories
between $\mathcal{X}_{\mathcal{C}}$ and $(\mathcal{C}')^{\mathrm{op}}$. Similarly we define
$\mathcal{A}'$ and $\mathcal{X}_{\mathcal{A}}$ and get an isomorphism between 
$\mathcal{X}_{\mathcal{A}}$ and $(\mathcal{A}')^{\mathrm{op}}$. Note that $\mathcal{X}_{\mathcal{C}}$ is a 
multiplicity free additive generator of $\mathcal{C}\text{-}\mathrm{proj}$. The equivalence $\Phi$ thus maps it
to a multiplicity free  additive generator of $\mathcal{A}\text{-}\mathrm{proj}$. Hence
$\Phi(\mathcal{X}_{\mathcal{C}})$ is isomorphic to $\mathcal{X}_{\mathcal{A}}$ which implies that the categories 
$\mathcal{C}'$ and $\mathcal{A}'$ are isomorphic. Therefore $\mathcal{C}$ and $\mathcal{A}$ are equivalent.
\end{proof}

In the general case we have the following.

\begin{theorem}[Morita Theorem for $\Bbbk$-linear categories]\label{mthm}
Let $\mathcal{A}$ and $\mathcal{C}$ be two finitary $\Bbbk$-linear categories. Then the following assertions
are equivalent.
\begin{enumerate}[$($a$)$]
\item\label{mthm.1} There exists a progenerator $\mathcal{P}\in \mathcal{C}\text{-}\mathrm{mod}$
such that $\mathcal{P}^{\mathrm{op}}$ is equivalent to $\mathcal{A}$.
\item\label{mthm.2} The categories $\mathcal{A}\text{-}\mathrm{proj}$ and $\mathcal{C}\text{-}\mathrm{proj}$ 
are equivalent.
\item\label{mthm.3} The categories $\mathcal{A}\text{-}\mathrm{mod}$ and $\mathcal{C}\text{-}\mathrm{mod}$ 
are equivalent.
\end{enumerate}
\end{theorem}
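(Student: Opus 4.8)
The plan is to prove the cycle of implications \ref{mthm.1}$\,\Rightarrow\,$\ref{mthm.2}$\,\Rightarrow\,$\ref{mthm.3}$\,\Rightarrow\,$\ref{mthm.1}. Throughout I would use the Yoneda dictionary from Subsection~\ref{s1.2}: the representable functors $\mathcal{C}(\mathtt{i}_t,{}_-)$ span a full subcategory $\mathcal{X}_{\mathcal{C}}$ of $\mathcal{C}\text{-}\mathrm{proj}$ isomorphic to $(\mathcal{C}')^{\mathrm{op}}$, and $\mathcal{C}\text{-}\mathrm{proj}$ is the additive closure $\overline{\mathcal{X}_{\mathcal{C}}}$; likewise for $\mathcal{A}$. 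For \ref{mthm.1}$\,\Rightarrow\,$\ref{mthm.2}, recall that a progenerator $\mathcal{P}$ is by definition a full subcategory of $\mathcal{C}\text{-}\mathrm{proj}$ with $\overline{\mathcal{P}}=\mathcal{C}\text{-}\mathrm{proj}$. Since an equivalence of $\Bbbk$-linear categories extends, uniquely up to isomorphism, to an equivalence of additive closures, the hypothesis $\mathcal{P}^{\mathrm{op}}\simeq\mathcal{A}$ gives $\mathcal{P}\simeq\mathcal{A}^{\mathrm{op}}\simeq\mathcal{X}_{\mathcal{A}}$, whence $\mathcal{C}\text{-}\mathrm{proj}=\overline{\mathcal{P}}\simeq\overline{\mathcal{X}_{\mathcal{A}}}=\mathcal{A}\text{-}\mathrm{proj}$.

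For \ref{mthm.3}$\,\Rightarrow\,$\ref{mthm.1}, let $F\colon\mathcal{C}\text{-}\mathrm{mod}\to\mathcal{A}\text{-}\mathrm{mod}$ be an equivalence with quasi-inverse $G$. As projectivity is a categorical property (exactness of the corepresentable functors), $F$ preserves projectives and, being essentially surjective, restricts to an equivalence $\mathcal{C}\text{-}\mathrm{proj}\xrightarrow{\sim}\mathcal{A}\text{-}\mathrm{proj}$; this already yields \ref{mthm.2}. Transporting $\mathcal{X}_{\mathcal{A}}$ back, I would set $\mathcal{P}:=G(\mathcal{X}_{\mathcal{A}})$, a full subcategory of $\mathcal{C}\text{-}\mathrm{proj}$. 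Since $\mathcal{X}_{\mathcal{A}}$ additively generates $\mathcal{A}\text{-}\mathrm{proj}$ and $G$ is an equivalence on projectives, $\overline{\mathcal{P}}=\mathcal{C}\text{-}\mathrm{proj}$, so $\mathcal{P}$ is a progenerator; and $\mathcal{P}^{\mathrm{op}}\simeq\mathcal{X}_{\mathcal{A}}^{\mathrm{op}}\simeq\mathcal{A}$, as required.

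The heart of the matter, and the step I expect to be the main obstacle, is \ref{mthm.2}$\,\Rightarrow\,$\ref{mthm.3}: promoting an additive equivalence $\Phi\colon\mathcal{C}\text{-}\mathrm{proj}\xrightarrow{\sim}\mathcal{A}\text{-}\mathrm{proj}$ to the module categories. The key point is that, because $\mathcal{C}$ is finitary, every $M\in\mathcal{C}\text{-}\mathrm{mod}$ admits a projective presentation $P_1\xrightarrow{f}P_0\to M\to 0$ with $P_0,P_1\in\mathcal{C}\text{-}\mathrm{proj}$, so $M\cong\mathrm{coker}(f)$ is recovered from data living entirely in $\mathcal{C}\text{-}\mathrm{proj}$. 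I would therefore define $\widetilde{\Phi}\colon\mathcal{C}\text{-}\mathrm{mod}\to\mathcal{A}\text{-}\mathrm{mod}$ by $\widetilde{\Phi}(M)=\mathrm{coker}(\Phi(f))$, and on a morphism $\alpha\colon M\to N$ by lifting $\alpha$ to a map of chosen presentations (possible by projectivity of $P_0,P_1$), applying $\Phi$, and passing to cokernels. The genuinely technical work is checking that $\widetilde{\Phi}$ is well defined: independence of the chosen presentation and of the chosen lift both reduce to the fact that $\Phi$, being additive and fully faithful, carries comparison morphisms and null-homotopies between presentations to morphisms and null-homotopies of the same kind, hence induces canonical isomorphisms on the cokernels.

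Once well-definedness and functoriality are established, I would build $\widetilde{\Psi}$ from $\Psi=\Phi^{-1}$ in the same manner; the composites $\widetilde{\Psi}\,\widetilde{\Phi}$ and $\widetilde{\Phi}\,\widetilde{\Psi}$ are right exact functors restricting to the identity on projectives up to isomorphism, and since every module is a cokernel of a map of projectives this forces them to be isomorphic to the respective identity functors, so $\widetilde{\Phi}$ is an equivalence. Conceptually, this argument is the assertion that $\mathcal{C}\text{-}\mathrm{mod}$ is the category of finitely presented functors on $\mathcal{C}\text{-}\mathrm{proj}$ and hence depends only on the additive category $\mathcal{C}\text{-}\mathrm{proj}$. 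As a shortcut one could instead pass to the skeleta as in Proposition~\ref{prop102}, form the finite-dimensional algebras $R_{\mathcal{C}}=\bigoplus_{s,t}\mathcal{C}(\mathtt{i}_s,\mathtt{i}_t)$ and $R_{\mathcal{A}}$, and invoke the classical Morita theorem; but I prefer the intrinsic cokernel argument, as it is the one that transfers to the $2$-categorical setting of the later sections.
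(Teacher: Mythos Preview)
Your argument is correct, but it follows a different cycle and, more importantly, a different mechanism than the paper. The paper runs \eqref{mthm.3}$\Rightarrow$\eqref{mthm.2}$\Rightarrow$\eqref{mthm.1}$\Rightarrow$\eqref{mthm.3}, and the substantive step \eqref{mthm.1}$\Rightarrow$\eqref{mthm.3} is done in the classical Morita style: given a progenerator $\mathcal{P}$ with $\mathcal{P}^{\mathrm{op}}\simeq\mathcal{A}$, one writes down the adjoint pair $\mathcal{P}\otimes_{\mathcal{A}}{-}$ and $\mathrm{Hom}_{\mathcal{C}}(\mathcal{P},{-})$ and checks directly that they are inverse equivalences. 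Your route instead places the weight on \eqref{mthm.2}$\Rightarrow$\eqref{mthm.3}, extending an equivalence of projectives to modules by choosing projective presentations and passing to cokernels---equivalently, identifying $\mathcal{C}\text{-}\mathrm{mod}$ with finitely presented functors on $\mathcal{C}\text{-}\mathrm{proj}$. Both approaches are standard and sound; yours avoids introducing the tensor product over a category, at the cost of the well-definedness verifications you flag.

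One small caveat on motivation: your closing remark that the cokernel argument ``transfers to the $2$-categorical setting of the later sections'' does not match what the paper actually does. In the proof of Theorem~\ref{thmmain}, the implication \eqref{thmmain.2}$\Rightarrow$\eqref{thmmain.3} is \emph{not} obtained by a presentation argument; instead the paper passes to the essential $2$-subcategory ${}^{\dagger}\cC$ (Lemma~\ref{lem531} and Proposition~\ref{prop555}) and uses that biequivalent $2$-categories have biequivalent $2$-representation categories. So while your approach here is perfectly valid for the $1$-categorical statement, it is not the template the paper follows at the $2$-level.
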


\begin{proof}
Claim \eqref{mthm.3} implies claim \eqref{mthm.2} as any categorical equivalence sends projective objects to
projective objects. If $\Phi:\mathcal{A}\text{-}\mathrm{proj}\to \mathcal{C}\text{-}\mathrm{proj}$ is an
equivalence, then the image of the full subcategory in $\mathcal{A}\text{-}\mathrm{mod}$ with objects 
$\mathcal{A}(\mathtt{i},{}_-)$, $\mathtt{i}\in \mathcal{A}$, under $\Phi$ is a progenerator 
for $\mathcal{C}\text{-}\mathrm{proj}$. Hence claim \eqref{mthm.2} implies claim \eqref{mthm.1}. 
Finally, if there exists a progenerator $\mathcal{P}\in \mathcal{C}\text{-}\mathrm{mod}$
such that $\mathcal{P}^{\mathrm{op}}$ is equivalent to $\mathcal{A}$, then the functors
\begin{displaymath}
\mathcal{P}\otimes_{\mathcal{A}}{}_-: \mathcal{A}\text{-}\mathrm{mod}\to \mathcal{C}\text{-}\mathrm{mod}
\quad\text{ and }\quad
\mathrm{Hom}_{\mathcal{C}}(\mathcal{P},{}_-):\mathcal{C}\text{-}\mathrm{mod}\to \mathcal{A}\text{-}\mathrm{mod} 
\end{displaymath}
are easily seen to be mutually inverse equivalences of categories
(for the definition of $\mathcal{P}\otimes_{\mathcal{A}}{}_-$ we refer to e.g. \cite[Page~1139]{MOS}).
\end{proof}

\subsection{Image of a functor}\label{s1.6}

Let $\mathcal{A}$ and $\mathcal{C}$ be two categories and $\mathsf{F}:\mathcal{A}\to\mathcal{C}$ be a functor.
In general, one cannot speak about the ``image'' of $\mathsf{F}$. However, if both $\mathcal{A}$ and 
$\mathcal{C}$ are small and $\mathsf{F}$ induces an injective map from objects of $\mathcal{A}$ to objects
of $\mathcal{C}$, then we can define the {\em image} $\mathsf{F}(\mathcal{A})$ of $\mathsf{F}$ to be the 
subcategory of $\mathcal{C}$ which consists of all objects $\mathsf{F}(\mathtt{i})$ for $\mathtt{i}\in\mathcal{A}$ 
and all morphisms $\mathsf{F}(\alpha)$ where $\alpha$ is a morphism in $\mathcal{A}$.

\section{Finitary $2$-categories and their $2$-representations}\label{s2}

\subsection{Various $2$-categories}\label{s2.1}

In this paper by a $2$-category we mean a strict locally small $2$-category (see \cite{Le} for a concise 
introduction to $2$-categories and bicategories). Let $\cC$ be a $2$-category. 
We will use $\mathtt{i},\mathtt{j},\dots$  to denote objects in $\cC$;
$1$-morphisms in $\cC$ will be denoted by $\mathrm{F},\mathrm{G},\dots$; $2$-morphisms in $\cC$ will be denoted 
by $\alpha,\beta,\dots$. For $\mathtt{i}\in\cC$ we denote by $\mathbbm{1}_{\mathtt{i}}$ the corresponding
identity $1$-morphism. For a $1$-morphism $\mathrm{F}$ we denote by $\mathrm{id}_{\mathrm{F}}$ the 
corresponding identity $2$-morphism. We will write $\circ_0$ for horizontal composition of $2$-morphisms and $\circ_1$ for vertical composition of $2$-morphisms. The {\em opposite} $2$-category $\cC^{\mathrm{op}}$ is obtained
by reversing all $1$-morphisms and keeping the direction of all  $2$-morphisms.

Denote by $\mathbf{Cat}$ the $2$-category of all small categories. Let $\Bbbk$ be an algebraically closed field. 
Denote by $\mathfrak{A}_{\Bbbk}$ the $2$-category whose objects are small $\Bbbk$-linear fully additive categories, by which we mean additive categories which are idempotent-closed (or Karoubian);
$1$-morphisms are additive $\Bbbk$-linear functors and $2$-morphisms are natural transformations. Denote by 
$\mathfrak{A}_{\Bbbk}^f$ the full $2$-subcategory of $\mathfrak{A}_{\Bbbk}$ whose objects are 
fully additive categories $\mathcal{A}$ such that $\mathcal{A}$ has only finitely many 
isomorphism classes of indecomposable objects and all morphism spaces in $\mathcal{A}$ are finite dimensional.
We also denote by $\mathfrak{R}_{\Bbbk}$ the full subcategory of $\mathfrak{A}_{\Bbbk}$ containing all
objects which are equivalent to $A\text{-}\mathrm{mod}$ for some finite dimensional associative 
$\Bbbk$-algebra $A$.

\subsection{Finitary $2$-categories}\label{s2.2}

A $2$-category $\cC$ is called {\em finitary (over $\Bbbk$)}, see  \cite{MM}, if the following conditions are satisfied:
\begin{itemize}
\item $\cC$ has finitely many objects up to equivalence;
\item for any $\mathtt{i},\mathtt{j}\in\cC$ we have $\cC(\mathtt{i},\mathtt{j})\in \mathfrak{A}_{\Bbbk}^f$
and horizontal composition is both additive and $\Bbbk$-linear;
\item for any $\mathtt{i}\in\cC$ the $1$-morphism $\mathbbm{1}_{\mathtt{i}}$ is indecomposable.
\end{itemize}
%We will call  $\cC$  {\em weakly fiat} provided that it has a weak object preserving anti-autoequivalence $*$
%and for any $1$-morphism $\mathrm{F}\in\cC(\mathtt{i},\mathtt{j})$ there exist 
%$2$-morphisms $\alpha:\mathrm{F}\circ\mathrm{F}^*\to
%\mathbbm{1}_{\mathtt{j}}$ and $\beta:\mathbbm{1}_{\mathtt{i}}\to \mathrm{F}^*\circ\mathrm{F}$ such that 
%$\alpha_{\mathrm{F}}\circ_1\mathrm{F}(\beta)=\mathrm{id}_{\mathrm{F}}$ and
%$\mathrm{F}^*(\alpha)\circ_1\beta_{\mathrm{F}^*}=\mathrm{id}_{\mathrm{F}^*}$.
%If $*$ is involutive, then $\cC$ is called  {\em fiat}, see \cite{MM}.

\subsection{Homomorphisms, strong transformations and modifications}\label{s2.3}

Here we closely follow \cite{Le}.
Let $\cA$ and $\cC$ be $2$-categories. A {\em homomorphism} $\mathbf{H}:\cA\to \cC$ consists of the following data:
\begin{itemize}
\item a map $\mathbf{H}$ from objects of $\cA$ to objects of $\cC$;
\item functors $\mathbf{H}_{\mathtt{i},\mathtt{j}}:\cA(\mathtt{i},\mathtt{j})\to 
\cC(\mathbf{H}(\mathtt{i}),\mathbf{H}(\mathtt{j}))$;
\item natural isomorphisms
\begin{displaymath}
\mathbf{h}_{\mathrm{G},\mathrm{F}}:\mathbf{H}_{\mathtt{i},\mathtt{j}}(\mathrm{G})\circ 
\mathbf{H}_{\mathtt{i},\mathtt{j}}(\mathrm{F})\to \mathbf{H}_{\mathtt{i},\mathtt{j}}(\mathrm{G}\circ \mathrm{F})\quad
\text{ and }\quad
\mathbf{h}_{\mathtt{i}}:\mathbbm{1}_{\mathbf{H}(\mathtt{i})}\to \mathbf{H}_{\mathtt{i},\mathtt{i}}(\mathbbm{1}_{\mathtt{i}});
\end{displaymath}
\end{itemize}
such that the following conditions are satisfied:
\begin{displaymath}
\begin{array}{rcl}
\mathbf{h}_{\mathrm{H}\circ\mathrm{G},\mathrm{F}}\circ_1 
(\mathbf{h}_{\mathrm{H},\mathrm{G}}\circ_0 \mathrm{id}_{\mathbf{H}_{\mathtt{i},\mathtt{j}}(\mathrm{F})})&=&
\mathbf{h}_{\mathrm{H},\mathrm{G}\circ\mathrm{F}}\circ_1
(\mathrm{id}_{\mathbf{H}_{\mathtt{k},\mathtt{l}}(\mathrm{H})}\circ_0 \mathbf{h}_{\mathrm{G},\mathrm{F}}),\\
\mathrm{id}_{\mathbf{H}_{\mathtt{i},\mathtt{j}}(\mathrm{F})}&=&
\mathbf{h}_{\mathrm{F},\mathbbm{1}_{\mathtt{i}}}\circ_1 
(\mathrm{id}_{\mathbf{H}_{\mathtt{i},\mathtt{j}}(\mathrm{F})}\circ_0 \mathbf{h}_{\mathtt{i}}),\\
\mathrm{id}_{\mathbf{H}_{\mathtt{i},\mathtt{j}}(\mathrm{F})}&=&
\mathbf{h}_{\mathbbm{1}_{\mathtt{j}},\mathrm{F}}\circ_1 
(\mathbf{h}_{\mathtt{j}}\circ_0 \mathrm{id}_{\mathbf{H}_{\mathtt{i},\mathtt{j}}(\mathrm{F})}).
\end{array}
\end{displaymath}

Given homomorphisms $\mathbf{H}$ and $\mathbf{G}$ from $\cA$ to $\cC$ a {\em strong transformation}
$\Phi:\mathbf{H}\to \mathbf{G}$ is given by the following data:
\begin{itemize}
\item functors $\Phi_{\mathtt{i}}:\mathbf{H}(\mathtt{i})\to \mathbf{G}(\mathtt{i})$;
\item natural isomorphisms $\varphi_{\mathrm{F}}: \mathbf{G}(\mathrm{F})\circ \Phi_{\mathtt{i}}\to
\Phi_{\mathtt{j}}\circ \mathbf{H}(\mathrm{F})$;
\end{itemize}
such that the following conditions are satisfied:
\begin{displaymath}
\begin{array}{rcl}
\varphi_{\mathrm{G}\circ \mathrm{F}}\circ_1 (\mathbf{g}_{\mathrm{G},\mathrm{F}}\circ_0 
\mathrm{id}_{\Phi_{\mathtt{i}}})&=&(\mathrm{id}_{\Phi_{\mathtt{k}}}\circ_0\mathbf{h}_{\mathrm{G},\mathrm{F}})
\circ_1 (\varphi_{\mathrm{G}}\circ_0 \mathrm{id}_{\mathbf{H}(\mathrm{F})})\circ_1
(\mathrm{id}_{\mathbf{G}(\mathrm{G})}\circ_0 \varphi_{\mathrm{F}}); \\
\mathrm{id}_{\Phi_{\mathtt{i}}}\circ_0 \mathbf{h}_{\mathtt{i}}&=&\varphi_{\mathbbm{1}_{\mathtt{i}}}\circ_1
(\mathbf{g}_{\mathtt{i}}\circ_0\mathrm{id}_{\Phi_{\mathtt{i}}}).
\end{array}
\end{displaymath}
If the natural isomorphisms $\varphi_{\mathrm{F}}$ are identities, the strong transformation is called a
{\em strict transformation}.

Given two strong transformations $\Phi,\Psi:\mathbf{H}\to \mathbf{G}$ a {\em modification}
$\theta:\Phi\to\Psi$ is a collection of $2$-morphisms $\theta_{\mathtt{i}}:\Phi_{\mathtt{i}}\to \Psi_{\mathtt{i}}$
such that 
\begin{displaymath}
\psi_{\mathrm{F}}\circ_1 (\mathrm{id}_{\mathbf{G}(\mathrm{F})}\circ_0 \theta_{\mathtt{i}})=
(\theta_{\mathtt{j}}\circ_0 \mathrm{id}_{\mathbf{H}(\mathrm{F})})\circ_1\varphi_{\mathrm{F}}.
\end{displaymath}

\subsection{$2$-representations}\label{s2.4}

Let $\cC$ be a finitary $2$-category. We define the following $2$-categories of $2$-representations of $\cC$:
\begin{itemize}
\item the $2$-category $\cC\text{-}\mathrm{MOD}$ has as objects all homomorphisms from $\cC$ to
$\mathbf{Cat}$, as $1$-morphisms all strong transformations and as $2$-morphisms all modifications;
\item the $2$-category $\cC\text{-}\mathrm{amod}$ has as objects all homomorphisms from $\cC$ to
$\mathfrak{A}_{\Bbbk}$, as $1$-morphisms all strong transformations and as $2$-morphisms all modifications;
\item the $2$-category $\cC\text{-}\mathrm{afmod}$ has as objects all homomorphisms from $\cC$ to
$\mathfrak{A}_{\Bbbk}^f$, as $1$-morphisms all strong transformations and as $2$-morphisms all modifications;
\item the $2$-category $\cC\text{-}\mathrm{mod}$ has as objects all homomorphisms from $\cC$ to
$\mathfrak{R}_{\Bbbk}$, as $1$-mor\-phisms all strong transformations and as $2$-morphisms all modifications.
\end{itemize}
These are indeed $2$-categories since $\mathfrak{A}_{\Bbbk}$, $\mathfrak{A}_{\Bbbk}^f$ and $\mathfrak{R}_{\Bbbk}$ are.
We will write $\mathrm{Hom}_{\ccC}$ for $\mathrm{Hom}_{\ccC\text{-}\mathrm{afmod}}$.

\subsection{Biequivalence}\label{s2.5}

Let $\cA$ and $\cC$ be two $2$-categories. A {\em biequivalence} $\mathbf{H}:\cA\to\cC$ is a homomorphism
which is essentially surjective on objects and which is a {\em local equivalence}, that is
$\mathbf{H}_{\mathtt{i},\mathtt{j}}$ is an equivalence for all $\mathtt{i}$ and $\mathtt{j}$.
The $2$-categories $\cA$ and $\cC$ are called {\em biequivalent} if there is a biequivalence from
$\cA$ to $\cC$. Biequivalence is an equivalence relation, see \cite[Subsection~2.2]{Le}.

Alternatively, two homomorphisms $\mathbf{H}:\cA\to\cC$ and $\mathbf{G}:\cC\to\cA$ are mutually inverse biequivalences
if there exist strong transformations
\begin{equation}\label{eq1}
\mathbf{H}\circ \mathbf{G}\overset{\Phi_1}{\longrightarrow} \mathrm{Id}_{\ccC},\,\,\,\,
\mathrm{Id}_{\ccC}\overset{\Phi_2}{\longrightarrow} \mathbf{H}\circ \mathbf{G},\,\,\,\, 
\mathbf{G}\circ\mathbf{H}\overset{\Psi_1}{\longrightarrow} \mathrm{Id}_{\ccA}\quad\text{ and }\quad
\mathrm{Id}_{\ccA}\overset{\Psi_2}{\longrightarrow}  \mathbf{G}\circ\mathbf{H}
\end{equation}
and modifications
\begin{equation}\label{eq2}
\begin{array}{ccc}
\theta_{1}:\Phi_1\circ\Phi_2\to \mathbbm{1}_{\mathrm{Id}_{\ccC}},&\hspace{1cm}&
\theta_{2}:\mathbbm{1}_{\mathbf{H}\circ \mathbf{G}}\to \Phi_2\circ\Phi_1,\\
\theta_{3}:\Psi_1\circ\Psi_2\to \mathbbm{1}_{\mathrm{Id}_{\ccA}},&\hspace{1cm}&
\theta_{4}:\mathbbm{1}_{\mathbf{G}\circ \mathbf{H}}\to \Psi_2\circ\Psi_1.
\end{array}
\end{equation}
such that the latter are isomorphisms.

\subsection{Cancellative envelope of a $2$-category}\label{s2.6}

Let $\cC$ be a finitary $2$-category. Define a new $2$-category $\widehat{\cC}$ as follows:
\begin{itemize}
\item $\widehat{\cC}$ has the same objects as $\cC$;
\item $1$-morphisms in $\widehat{\cC}$ are all possible expressions of the form 
$\mathrm{F}_{\mathrm{X}_1,\dots,\mathrm{X}_k}$, where $k\in\mathbb{N}$ and $\mathrm{F}$, $\mathrm{X}_1$,\dots,
$\mathrm{X}_k$ are $1$-morphisms in $\cC$ such that $\mathrm{F}=\mathrm{X}_1\circ\dots\circ\mathrm{X}_k$, and also all
possible expressions of the form  $(\mathbbm{1}_{\mathtt{i}})_{\varnothing}$, where $\mathtt{i}\in\cC$;
\item $\widehat{\cC}(\mathrm{F}_{\mathrm{X}_1,\dots,\mathrm{X}_k},\mathrm{G}_{\mathrm{Y}_1,\dots,\mathrm{Y}_m}):=
{\cC}(\mathrm{F},\mathrm{G})$; 
\item the identity $1$-morphisms are $(\mathbbm{1}_{\mathtt{i}})_{\varnothing}$, where $\mathtt{i}\in\cC$;
\item horizontal composition of $1$-morphisms is defined via
\begin{displaymath}
\mathrm{F}_{\mathrm{X}_1,\dots,\mathrm{X}_k}\circ\mathrm{G}_{\mathrm{Y}_1,\dots,\mathrm{Y}_m}:=
(\mathrm{F}\circ\mathrm{G})_{\mathrm{X}_1,\dots,\mathrm{X}_k,\mathrm{Y}_1,\dots,\mathrm{Y}_m};
\end{displaymath}
\item both horizontal and vertical composition of $2$-morphisms are induced from the corresponding compositions in $\cC$.
\end{itemize}

Forgetting the subscripts of $1$-morphisms defines a $2$-functor from $\widehat{\cC}$ to $\cC$ which is a biequivalence
by construction. The category $\widehat{\cC}$ is {\em cancellative} in the sense that for any $1$-morphisms
$\mathrm{F}$, $\mathrm{G}$ and $\mathrm{H}$ in $\widehat{\cC}$ the equality 
$\mathrm{F}\circ\mathrm{H}=\mathrm{G}\circ\mathrm{H}$
implies $\mathrm{F}=\mathrm{G}$ and, moreover, the equality $\mathrm{H}\circ\mathrm{F}=\mathrm{H}\circ\mathrm{G}$
implies $\mathrm{F}=\mathrm{G}$.

\subsection{$2$-representations of biequivalent $2$-categories}\label{s2.7}

We will need the following observation.

\begin{proposition}\label{prop123}
Let $\cA$ and $\cC$ be two biequivalent finitary $2$-categories. Then 
the $2$-categories $\cA\text{-}\mathrm{afmod}$ and $\cC\text{-}\mathrm{afmod}$ are biequivalent.
\end{proposition}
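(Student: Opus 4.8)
The plan is to transport $2$-representations along a fixed biequivalence by precomposition, exactly as in the $1$-categorical Morita theorem (Proposition~\ref{prop102}), and to check that the extra coherence data required in the $2$-setting can be assembled from the biequivalence data in \eqref{eq1} and \eqref{eq2}. Concretely, let $\mathbf{H}:\cA\to\cC$ and $\mathbf{G}:\cC\to\cA$ be mutually inverse biequivalences with the strong transformations $\Phi_1,\Phi_2,\Psi_1,\Psi_2$ and the invertible modifications $\theta_1,\dots,\theta_4$ as in \eqref{eq1}--\eqref{eq2}. First I would define precomposition $2$-functors
\begin{displaymath}
{}_-\circ\mathbf{H}:\cC\text{-}\mathrm{afmod}\to\cA\text{-}\mathrm{afmod}\quad\text{ and }\quad
{}_-\circ\mathbf{G}:\cA\text{-}\mathrm{afmod}\to\cC\text{-}\mathrm{afmod}.
\end{displaymath}

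\textbf{The precomposition functors.} The bulk of the argument is checking that these precomposition assignments are well defined and $2$-functorial. On objects, if $\mathbf{M}:\cC\to\mathfrak{A}_{\Bbbk}^f$ is a homomorphism, then $\mathbf{M}\circ\mathbf{H}$ is again a homomorphism $\cA\to\mathfrak{A}_{\Bbbk}^f$, with composition isomorphisms obtained by pasting the $\mathbf{m}$ of $\mathbf{M}$ with the $\mathbf{h}$ of $\mathbf{H}$; here one verifies the homomorphism coherence conditions from Subsection~\ref{s2.3} by a diagram chase. On $1$-morphisms (strong transformations) and $2$-morphisms (modifications), one precomposes the component functors $\Phi_{\mathtt{i}}$ with the $\mathbf{H}(\mathtt{i})$, and checks that the resulting families again satisfy the strong-transformation and modification axioms. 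None of these verifications is deep; they are the routine ``whiskering by $\mathbf{H}$'' computations, and I would state them as such rather than grinding through every pasting diagram.

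\textbf{Assembling the biequivalence.} The heart of the matter is producing mutually inverse data between the composite $2$-functors $({}_-\circ\mathbf{H})\circ({}_-\circ\mathbf{G})={}_-\circ(\mathbf{H}\circ\mathbf{G})$ and the identity on $\cC\text{-}\mathrm{afmod}$ (and symmetrically for $\cA$). For a homomorphism $\mathbf{M}\in\cC\text{-}\mathrm{afmod}$, precomposition with the strong transformation $\Phi_1:\mathbf{H}\circ\mathbf{G}\to\mathrm{Id}_{\ccC}$ yields a strong transformation $\mathbf{M}\circ\Phi_1:\mathbf{M}\circ(\mathbf{H}\circ\mathbf{G})\to\mathbf{M}$, and these are natural in $\mathbf{M}$, so they constitute a $1$-morphism in the $2$-category $\cC\text{-}\mathrm{afmod}\text{-}\mathrm{afmod}$ of endo-$2$-functors; similarly $\Phi_2$ gives a $1$-morphism in the other direction. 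I would then whisker the invertible modifications $\theta_1,\theta_2$ by $\mathbf{M}$ to obtain invertible modifications witnessing that $\mathbf{M}\circ\Phi_1$ and $\mathbf{M}\circ\Phi_2$ are mutually inverse up to isomorphism, and dually use $\Psi_1,\Psi_2,\theta_3,\theta_4$ on the $\cA$ side. Feeding this collected data into the alternative characterization of mutually inverse biequivalences given by \eqref{eq1}--\eqref{eq2} shows that ${}_-\circ\mathbf{H}$ and ${}_-\circ\mathbf{G}$ are mutually inverse biequivalences, which is the desired conclusion.

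\textbf{Main obstacle.} The expected difficulty is purely bookkeeping: the coherence axioms for homomorphisms, strong transformations, and modifications are unwieldy, and one must be careful that whiskering a strong transformation or modification by a homomorphism genuinely preserves every axiom, and that the invertibility of the $\theta_i$ survives whiskering. A conceptually cleaner route, which I would mention as an alternative, is to invoke the general principle that for any target $2$-category the assignment $\cD\mapsto\mathrm{Hom}(\cD,\mathfrak{A}_{\Bbbk}^f)$ (with strong transformations and modifications) is a contravariant $2$-functor on the category of finitary $2$-categories and homomorphisms, and such a $2$-functor automatically sends biequivalences to biequivalences; the explicit construction above is simply an unravelling of this principle in the case at hand.
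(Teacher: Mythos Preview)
Your proposal is correct and follows exactly the paper's approach: define the precomposition homomorphisms ${}_-\circ\mathbf{H}$ and ${}_-\circ\mathbf{G}$, then transport the biequivalence data of \eqref{eq1}--\eqref{eq2} by whiskering to obtain the required biequivalence data between $\cA\text{-}\mathrm{afmod}$ and $\cC\text{-}\mathrm{afmod}$. The paper states this in two sentences and leaves all the coherence checks implicit; your write-up simply makes explicit what the paper suppresses.
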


\begin{proof}
Every homomorphism $\mathbf{H}:\cA\to\cC$ induces a homomorphism 
\begin{displaymath}
{}_-\circ\mathbf{H}: \cC\text{-}\mathrm{afmod}\to \cA\text{-}\mathrm{afmod}. 
\end{displaymath}
If $\mathbf{H}:\cA\to\cC$ and $\mathbf{G}:\cC\to\cA$ are inverse biequivalences, then 
we claim that ${}_-\circ\mathbf{H}$ and ${}_-\circ\mathbf{G}$ are also inverse biequivalences.
Given the data of \eqref{eq1} and \eqref{eq2}, composition with 
$\mathrm{Id}_{\ccC\text{-}\mathrm{afmod}}$ and $\mathrm{Id}_{\ccA\text{-}\mathrm{afmod}}$
defines the data establishing biequivalences between 
$\cA\text{-}\mathrm{afmod}$ and $\cC\text{-}\mathrm{afmod}$.
\end{proof}

Combining Proposition~\ref{prop123} with construction of the cancellative envelope in Subsection~\ref{s2.7},
 we may, without loss of generality, always assume that $\cC$ is cancellative.

\section{Projective $2$-representations}\label{s4}

\subsection{Principal $2$-representations}\label{s4.001}

Let $\cC$ be a finitary $2$-category. For any $\mathtt{i}\in \cC$ we have the {\em principal} 
(finitary) $2$-representation $\cC(\mathtt{i},{}_-)$ of $\cC$ which we denote by $\mathbb{P}_{\mathtt{i}}$.
For any $\mathbf{M}\in \cC\text{-}\mathrm{afmod}$ we have the Yoneda equivalence of categories 
\begin{equation}\label{eq3}
\mathrm{Hom}_{\ccC\text{-}\mathrm{afmod}}(\mathbb{P}_{\mathtt{i}},\mathbf{M})\cong \mathbf{M}(\mathtt{i})
\end{equation}
given by evaluation at $\mathbbm{1}_{\mathtt{i}}$, which is, moreover, surjective on objects
(the proof is analogous to \cite[Lemma~3]{MM3}). The direct sum of principal $2$-representations with every $\mathtt{i}\in \cC$ occurring exactly once is the $2$-analog of a ``free module of rank one'' in 
classical representation theory.

For $\mathbf{M}\in\cC\text{-}\mathrm{afmod}$ consider the $2$-category $\cM$ defined as follows:
\begin{itemize}
\item objects of $\cM$ are $(\eta,\Phi,\Phi',\mathbb{P})$ where $\mathbb{P}$ is a direct sum of principal 
$2$-representations, $\Phi,\Phi':\mathbb{P}\to \mathbf{M}$ are strong transformations and
$\eta:\Phi\to\Phi'$ is a modification;
\item a $1$-morphism from $(\eta,\Phi,\Phi',\mathbb{P})$ to $(\zeta,\Psi,\Psi',\mathbb{P}')$ is a strong 
transformation $\Lambda:\mathbb{P}\to\mathbb{P}'$ such that $\Psi\circ\Lambda=\Phi$,
$\Psi'\circ\Lambda=\Phi'$ and $\zeta\circ_0\mathrm{id}_{\Lambda}=\eta$;
\item a $2$-morphism of $\cM$ is just a modification $\lambda:\Lambda\to\Lambda'$ such that
$\zeta\circ_0\lambda=\eta$.
\end{itemize}
The $2$-category $\cM$ is the $2$-category of principal covers of $\mathbf{M}$, from which, as we will see below in  Proposition \ref{prop91}, we can recover $\mathbf{M}$ as a colimit. This will be important in obtaining a description of projective $2$-representations in Proposition \ref{prop92} and is to be thought of as a substitute for 
``having enough projective modules'' in classical representation theory.

Define a $2$-functor $\Upsilon$ from $\cM$ to $\cC\text{-}\mathrm{afmod}$ by sending 
$(\eta,\Phi,\Phi',\mathbb{P})$ to $\mathbb{P}$ and defining $\Upsilon$ as the identity on both
$1$-morphisms and $2$-morphisms. Recall that a (strict) cocone $(\mathbf{N},\mathfrak{W})$ of $\Upsilon$ 
is an object $\mathbf{N} \in \cC\text{-}\mathrm{afmod}$ together with, for any 
$(\eta,\Phi,\Phi',\mathbb{P})\in \cM$,  an assignment of two strong 
transformations $\Theta_1, \Theta_2:  \Upsilon(\eta,\Phi,\Phi',\mathbb{P}) \to \mathbf{N}$ and
a modification $\theta:\Theta_1\to\Theta_2$ i.e.
\begin{displaymath}
\xymatrix{
\mathfrak{W}_{(\eta,\Phi,\Phi',\mathbb{P})}=& \mathbb{P} \rrtwocell^{\Theta_1}_{\Theta_{2}}{\theta_\eta}&& \mathbf{N}
}
\end{displaymath} 
such that  for all $2$-morphisms 
$\lambda:\Lambda \to \Lambda'$ in $\cM$ as above, we have $\theta_\zeta\circ_0\lambda = \theta_\eta$.
A strict colimit of $\Upsilon$ is an initial object in the $2$-category of cocones of $\Upsilon$.
Clearly, the pair $(\mathbf{M},\mathfrak{V})$ defined  via 
$\mathfrak{V}_{(\eta,\Phi,\Phi',\mathbb{P})}:=(\Phi\overset{\eta}{\Longrightarrow}\Phi')$ is a strict cocone of  $\Upsilon$.

\begin{proposition}\label{prop91}
The cocone $(\mathbf{M},\mathfrak{V})$ is a strict colimit.
\end{proposition}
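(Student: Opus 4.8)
The plan is to verify directly that $(\mathbf{M},\mathfrak{V})$ satisfies the universal property of an initial object in the $2$-category of cocones of $\Upsilon$: for every cocone $(\mathbf{N},\mathfrak{W})$ I must produce a morphism of cocones $(\mathbf{M},\mathfrak{V})\to(\mathbf{N},\mathfrak{W})$ and show that the category of such morphisms is terminal, i.e. that there is a unique morphism of cocones whose only $2$-endomorphism is the identity. Unravelling the definitions, a morphism of cocones is a strong transformation $\Xi\colon\mathbf{M}\to\mathbf{N}$ intertwining the two families of cocone data; writing $\Theta_1,\Theta_2,\theta_\eta$ for the $\mathfrak{W}$-data attached to an object $(\eta,\Phi,\Phi',\mathbb{P})$ of $\cM$, this means $\Xi\circ\Phi=\Theta_1$, $\Xi\circ\Phi'=\Theta_2$ and $\Xi\circ_0\eta=\theta_\eta$. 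The crucial structural observation is that the legs of the colimit cocone $\mathfrak{V}$ are the transformations $\Phi,\Phi'$ themselves, so the universal property is really a statement about reconstructing a strong transformation out of $\mathbf{M}$ from its values on the principal covers.

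First I would construct $\Xi$ using the Yoneda equivalence \eqref{eq3}, which is surjective on objects. For $\mathtt{i}\in\cC$ and an object $X\in\mathbf{M}(\mathtt{i})$, choose a strong transformation $\Phi^X\colon\mathbb{P}_{\mathtt{i}}\to\mathbf{M}$ with $\Phi^X(\mathbbm{1}_{\mathtt{i}})=X$ and set $\Xi_{\mathtt{i}}(X):=\Theta_1(\mathbbm{1}_{\mathtt{i}})$, where $\Theta_1$ is the first $\mathfrak{W}$-leg attached to $(\mathrm{id}_{\Phi^X},\Phi^X,\Phi^X,\mathbb{P}_{\mathtt{i}})$. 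For a morphism $f\colon X\to Y$ I would use fullness of \eqref{eq3} to pick a modification $\eta^f\colon\Phi^X\to\Phi^Y$ with $\eta^f(\mathbbm{1}_{\mathtt{i}})=f$ and set $\Xi_{\mathtt{i}}(f):=\theta_{\eta^f}(\mathbbm{1}_{\mathtt{i}})$. The coherence isomorphisms $\xi_{\mathrm{F}}$ making $\Xi$ into a strong transformation would be read off from the action of the $1$-morphisms $\mathrm{F}$ on the principal covers, i.e. from the strong-transformation structure already carried by the legs $\Theta_1$.

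Next I would check the ingredients of existence: well-definedness (independence of the chosen covers $\Phi^X$ and factorizations $\eta^f$) together with functoriality, then the strong-transformation axioms, and finally the cocone-morphism equations. Well-definedness and functoriality are exactly where $\cM$ is designed to help: two covers of the same object, or two factorizations of the same morphism, are compared through the $1$-morphisms $\Lambda$ and $2$-morphisms $\lambda$ of $\cM$ supplied by Yoneda, and the compatibility condition $\theta_\zeta\circ_0\lambda=\theta_\eta$ then forces the two recipes to agree after evaluation at $\mathbbm{1}_{\mathtt{i}}$. It is precisely here that one uses that $\cM$ contains all finite direct sums of principal $2$-representations and all strong transformations between them, so that there are enough morphisms to compare arbitrary covers. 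The cocone-morphism equations $\Xi\circ\Phi=\Theta_1$, and so on, then hold by construction upon evaluating at $\mathbbm{1}_{\mathtt{i}}$ and invoking \eqref{eq3}. Uniqueness is immediate and forces the whole statement: any morphism of cocones $\Xi'$ satisfies the same equations, whence $\Xi'_{\mathtt{i}}(X)=\Theta_1(\mathbbm{1}_{\mathtt{i}})=\Xi_{\mathtt{i}}(X)$ and likewise on morphisms, while the faithfulness part of \eqref{eq3} collapses the $2$-morphisms of cocones to identities.

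I expect the main obstacle to be the coherence bookkeeping rather than the reconstruction itself: one must produce the natural isomorphisms $\xi_{\mathrm{F}}$ and verify the two strong-transformation axioms of Subsection~\ref{s2.3}, all the while keeping track of the pseudonaturality constraints $\varphi_{\mathrm{F}}$, the composition constraints $\mathbf{h}_{\mathrm{G},\mathrm{F}}$ and the unit constraints, and one must confirm that the well-definedness argument survives in this non-strict setting. The conceptually delicate point is to exhibit, for each comparison of covers, the correct morphism of $\cM$ through which the compatibility condition applies; this is the technical heart of the argument and the reason the diagram $\cM$ is set up with so much redundancy.
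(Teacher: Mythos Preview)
Your proposal is correct and follows essentially the same approach as the paper: use the Yoneda equivalence \eqref{eq3} to lift objects and morphisms of $\mathbf{M}(\mathtt{i})$ to the diagram $\cM$, then read off the induced transformation from the cocone data of $(\mathbf{N},\mathfrak{W})$. The paper's proof is much terser; it chooses \emph{strict} transformations $\Phi,\Phi'$ (rather than arbitrary strong ones) and packages $X$, $Y$ and $f$ into a single object $(\eta,\Phi,\Phi',\mathbb{P}_{\mathtt{i}})$ of $\cM$, which lets it argue that strictness of the cocone \emph{forces} the formula $\Theta(X\overset{f}{\to}Y)=\mathfrak{W}_{(\eta,\Phi,\Phi',\mathbb{P}_{\mathtt{i}})}(\mathbbm{1}_{\mathtt{i}})$ and thereby collapse the well-definedness and uniqueness discussions you spell out into a single line.
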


\begin{proof}
Let $(\mathbf{N},\mathfrak{W})$ be another strict cocone of $\Upsilon$ in $\cC\text{-}\mathrm{afmod}$.
To define a strong transformation $\Theta:\mathbf{M}\to\mathbf{N}$, let $\mathtt{i} \in \cC$, 
$X,Y\in \mathbf{M}(\mathtt{i})$ and $f:X\to Y$. Consider $(\eta,\Phi,\Phi',\mathbb{P}_{\mathtt{i}})$ such that 
$\Phi$ is a strict transformation sending $\mathbbm{1}_{\mathtt{i}}$ to $X$,
$\Phi'$ is a strict transformation sending $\mathbbm{1}_{\mathtt{i}}$ to $Y$
and $\eta$ is a modification which evaluates to $f$ (these exist by the Yoneda lemma). Due to strictness of $(\mathbf{N},\mathfrak{W})$ as a cocone, we
necessarily have $\Theta(X\overset{f}{\longrightarrow}Y)=
\mathfrak{W}_{(\eta,\Phi,\Phi',\mathbb{P}_\mathtt{i})}(\mathbbm{1}_{\mathtt{i}})$. On the other hand,
the latter defines a morphism from $(\mathbf{M},\mathfrak{V})$ to $(\mathbf{N},\mathfrak{W})$. The claim follows.
\end{proof}

\subsection{Projective $2$-representations}\label{s4.002}

Let $\cC$ be a finitary $2$-category.  A finitary $2$-representation $\mathbf{P}$ of $\cC$ is called
{\em projective} if $\mathrm{Hom}_{\ccC}(\mathbf{P},{}_-)$ preserves all small colimits.

\begin{proposition}\label{prop92} 
A finitary $2$-representation $\mathbf{P}$ of $\cC$ is projective if and only if it is a retract
of a direct sum of principal $2$-representations.
\end{proposition}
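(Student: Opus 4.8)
The plan is to prove both implications by exploiting Proposition~\ref{prop91}, which exhibits every $\mathbf{M}$ as the strict colimit of the diagram $\Upsilon$ of its principal covers. The ``if'' direction should be the easier half, so I would dispatch it first. If $\mathbf{P}$ is a direct sum of principal $2$-representations, then $\mathrm{Hom}_{\ccC}(\mathbb{P}_{\mathtt{i}},{}_-)$ is, by the Yoneda equivalence \eqref{eq3}, naturally equivalent to the evaluation functor ${}_-(\mathtt{i})$, and evaluation at an object commutes with all colimits because colimits in $\cC\text{-}\mathrm{afmod}$ are (by the construction feeding into Proposition~\ref{prop91}) computed objectwise. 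Since $\mathrm{Hom}$ out of a finite direct sum is the product of the $\mathrm{Hom}$'s and finite products of colimit-preserving functors preserve colimits, principal $2$-representations and their finite direct sums are projective. Finally, if $\mathbf{P}$ is a retract of such a direct sum $\mathbb{P}$, I would write the splitting idempotent and argue that $\mathrm{Hom}_{\ccC}(\mathbf{P},{}_-)$ is a retract (a direct summand, in the appropriate additive sense) of $\mathrm{Hom}_{\ccC}(\mathbb{P},{}_-)$, and that a retract of a colimit-preserving functor preserves colimits; this uses that the idempotent splits, i.e. that the relevant categories are idempotent-complete, which holds because the targets lie in $\mathfrak{A}_{\Bbbk}$.

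For the ``only if'' direction, suppose $\mathbf{P}$ is projective, i.e. $\mathrm{Hom}_{\ccC}(\mathbf{P},{}_-)$ preserves all small colimits. The key is to apply this hypothesis to the colimit presentation of $\mathbf{P}$ itself: by Proposition~\ref{prop91}, $\mathbf{P}$ is the strict colimit of $\Upsilon$ over its own category $\cM$ of principal covers, with structure cocone $\mathfrak{V}$. Applying the colimit-preserving functor $\mathrm{Hom}_{\ccC}(\mathbf{P},{}_-)$ to this diagram, I expect to obtain that $\mathrm{Hom}_{\ccC}(\mathbf{P},\mathbf{P})$ is the corresponding colimit of the $\mathrm{Hom}_{\ccC}(\mathbf{P},\mathbb{P})$ over $\cM$. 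The point is then to locate the image of the identity transformation $\mathrm{id}_{\mathbf{P}}$ under the colimiting comparison: because $\mathrm{id}_{\mathbf{P}}$ is a specific element of the colimit $\mathrm{Hom}_{\ccC}(\mathbf{P},\mathbf{P})$, it must factor through one of the cocone components, i.e. there is some object $(\eta,\Phi,\Phi',\mathbb{P})\in\cM$ and a strong transformation $\Lambda:\mathbf{P}\to\mathbb{P}$ together with the structure map $\mathfrak{V}$-component $\mathbb{P}\to\mathbf{P}$ whose composite is (isomorphic to) $\mathrm{id}_{\mathbf{P}}$.

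Such a factorization of the identity through a direct sum $\mathbb{P}$ of principal $2$-representations is exactly a presentation of $\mathbf{P}$ as a retract of $\mathbb{P}$, which is what we want. I would make this precise by verifying that the composite $\mathbf{P}\overset{\Lambda}{\to}\mathbb{P}\to\mathbf{P}$ is isomorphic to $\mathrm{id}_{\mathbf{P}}$ as $1$-morphisms in $\cC\text{-}\mathrm{afmod}$ (up to an invertible modification), so that $\mathbf{P}$ is genuinely a retract in the $2$-categorical sense.

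The main obstacle I anticipate is the careful bookkeeping in the ``only if'' direction: extracting the factorization of $\mathrm{id}_{\mathbf{P}}$ from the statement that $\mathrm{Hom}_{\ccC}(\mathbf{P},{}_-)$ preserves the strict colimit requires understanding precisely what ``the identity lies in the colimit'' means at the level of the $2$-category $\cC\text{-}\mathrm{afmod}$, where the relevant $\mathrm{Hom}$ carries its own categorical structure (objects being strong transformations, morphisms being modifications) rather than being a mere set. I would need to argue that a colimiting object in this enriched setting still admits a factorization of any given element through a finite stage of the diagram $\cM$, and that the indexing object $(\eta,\Phi,\Phi',\mathbb{P})$ can be taken with $\mathbb{P}$ a finite direct sum of principal $2$-representations (finiteness coming from the finitary hypotheses on $\cC$ and on objects of $\mathfrak{A}_{\Bbbk}^f$). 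Keeping track of the strictness conditions in the definition of $\cM$ and of the cocone, while ensuring the resulting splitting is a genuine retract up to invertible modification, is where the real work lies.
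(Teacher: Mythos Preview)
Your proposal follows essentially the same route as the paper's proof (which in turn follows the classical argument of Borceux--Dejean): Yoneda for the ``if'' direction, and for the ``only if'' direction apply Proposition~\ref{prop91} to $\mathbf{P}$ itself, push the colimit through $\mathrm{Hom}_{\ccC}(\mathbf{P},{}_-)$, and read off a factorization of $\mathrm{id}_{\mathbf{P}}$ through some $\mathbb{P}$. The only cosmetic difference is that for retracts the paper writes $\mathrm{Hom}_{\ccC}(\mathbf{P},{}_-)\cong \mathrm{Hom}_{\ccC}(\mathbb{P},{}_-)\circ\Psi$ directly rather than arguing via idempotent splitting, and in the converse it observes that the identity is represented at an object of the specific form $(\mathrm{id}_{\Phi},\Phi,\Phi,\mathbb{P})$; your anticipated bookkeeping concerns are exactly the points the paper glosses over in the same way.
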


\begin{proof}
We follow the classical argument from \cite[Proposition~2]{BD}.  From the Yoneda lemma it follows that all 
principal representations (and their direct sums) are projective. Let $\mathbb{P}$ be a direct sum
of principal $2$-representations and $\mathbf{P}$ a retract of $\mathbb{P}$, that is there exist 
$\Phi:\mathbb{P}\to \mathbf{P}$ and $\Psi:\mathbf{P}\to \mathbb{P}$ such that $\Phi\Psi$ is isomorphic to the
identity on $\mathbf{P}$. Then $\mathrm{Hom}_{\ccC}(\mathbf{P},{}_-)$ is isomorphic to 
$\mathrm{Hom}_{\ccC}(\mathbb{P},{}_-)\circ \Psi$ and hence commutes with all small colimits (as
$\mathrm{Hom}_{\ccC}(\mathbb{P},{}_-)$ does).

Let now $\mathbf{M}$ be a projective finitary $2$-representation of $\cC$. By Proposition~\ref{prop91},
$\mathbf{M}$ is equivalent to the colimit of $\Upsilon:\cM\to\cC\text{-}\mathrm{afmod}$. 
Since $\mathrm{Hom}_{\ccC}(\mathbf{M},{}_-)$ 
preserves all small colimits, we have
\begin{displaymath}
\mathrm{Hom}_{\ccC}(\mathbf{M},\mathbf{M})\cong
\mathrm{Hom}_{\ccC}(\mathbf{M},\lim_{\to}\Upsilon)\cong
\lim_{\to}\mathrm{Hom}_{\ccC}(\mathbf{M},\Upsilon).
\end{displaymath}
Via this equivalence, the identity on $\mathbf{M}$ thus must have a representative in the right hand side. 
This means that there exists a direct sum $\mathbb{P}$ of principal $2$-representations and
$\Phi:\mathbb{P}\to \mathbf{M}$ such that the identity on $\mathbf{M}$ is represented in 
the term indexed by $(\mathrm{id}_{\Phi},\Phi,\Phi,\mathbb{P})$, say by some $\Psi$. This means that
$\Phi\Psi$ is the identity on $\mathbf{M}$. The claim follows.
\end{proof}

\subsection{Idempotent matrices with non-negative integer coefficients}\label{s4.2}

For an arbitrary $n\in\mathbb{N}$ we denote by $\mathtt{0}_n$ the zero $n\times n$ matrix and by
$\mathtt{1}_n$ the identity $n\times n$ matrix. We will need the following result from \cite[Theorem~2]{Fl}:

\begin{proposition}\label{prop401}
Let $M$ be an idempotent matrix with non-negative integer coefficients. Then there is a permutation
matrix $S$ such that $S^{-1}MS$ has the form
\begin{equation}\label{eq77}
\left(\begin{array}{ccc}\mathtt{0}_a&A&AB\\\mathtt{0}&\mathtt{1}_b&B\\
\mathtt{0}&\mathtt{0}&\mathtt{0}_c\end{array}\right) 
\end{equation}
for some matrices $A$ and $B$.
\end{proposition}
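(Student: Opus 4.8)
The plan is to exploit the defining equation $M^2 = M$ together with the non-negativity of the entries to separate the indices into three blocks according to their behaviour under $M$. Write $M = (m_{ij})$ with $m_{ij} \in \mathbb{Z}_{\geq 0}$. The idempotency relation reads $\sum_k m_{ik} m_{kj} = m_{ij}$ for all $i,j$; taking $i=j$ gives the crucial diagonal identity $\sum_k m_{ik} m_{ki} = m_{ii}$. Since all summands are non-negative integers and $m_{ii} m_{ii}$ already appears on the left, this forces $m_{ii} \in \{0,1\}$ for every $i$ (if $m_{ii} \geq 2$ then $m_{ii}^2 > m_{ii}$ and the remaining terms only make this worse). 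This splits the index set into $D_0 = \{i : m_{ii} = 0\}$ and $D_1 = \{i : m_{ii} = 1\}$, and I expect the block of size $b$ in \eqref{eq77} to correspond to $D_1$ while $D_0$ refines into the blocks of sizes $a$ and $c$.

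Next I would analyse the off-diagonal interaction within each diagonal class. For $i \in D_1$ the diagonal identity $\sum_k m_{ik}m_{ki} = 1$ already consumes the full budget via the $k=i$ term $m_{ii}^2 = 1$, so every other term $m_{ik}m_{ki}$ vanishes; hence for each $i \in D_1$ and each $j \neq i$ at least one of $m_{ij}, m_{ji}$ is zero. Applied across $D_1$, this should show that, after an appropriate permutation, the $D_1$-by-$D_1$ submatrix is triangular with $1$'s on the diagonal, and in fact idempotency of a triangular matrix with unit diagonal forces it to be exactly $\mathtt{1}_b$. The remaining task is to organize the $D_0$ indices so that the strictly-upper-triangular shape of \eqref{eq77} emerges: I would use the relation $M^2 = M$ restricted to the $D_0$ block to argue that $M|_{D_0 \times D_0}$ is nilpotent (its diagonal is zero, and a non-negative idempotent with zero diagonal on its own block must square to zero), then choose the permutation $S$ so that the part of $D_0$ feeding \emph{into} $D_1$ becomes the top block of size $a$ and the part receiving \emph{from} $D_1$ becomes the bottom block of size $c$.

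With the blocks $(a,b,c)$ in place and $S$ chosen as above, the normal form follows by reading off the remaining relations. Writing $S^{-1}MS$ in block form with the $(1,1)$, $(2,2)$, $(3,3)$ diagonal blocks equal to $\mathtt{0}_a$, $\mathtt{1}_b$, $\mathtt{0}_c$ respectively, I would set $A$ to be the $(1,2)$ block and $B$ the $(2,3)$ block, and then verify that idempotency forces the $(1,3)$ block to equal $AB$ and all six subdiagonal blocks to vanish. Concretely, expanding $M^2 = M$ block-by-block yields equations such as $A\cdot\mathtt{1}_b = A$ (automatic), $\mathtt{1}_b \cdot B = B$ (automatic), and the off-diagonal constraint producing $A B$ in the upper-right corner; the vanishing of the lower-triangular blocks is exactly the triangularization achieved by the choice of $S$.

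The main obstacle I anticipate is the combinatorial bookkeeping in the middle step: turning the pointwise statements ``at least one of $m_{ij},m_{ji}$ is zero'' into a single coherent permutation that \emph{simultaneously} triangularizes the $D_1$ block, identifies it with $\mathtt{1}_b$, and splits $D_0$ into the ``source'' ($a$) and ``sink'' ($c$) parts in a compatible way. This is essentially a reachability/partial-order argument on the indices, and the delicate point is ruling out cycles (which is where non-negativity together with idempotency, rather than mere nilpotency, is genuinely needed). The cleanest route, and the one the cited reference \cite{Fl} presumably takes, is to define a preorder $i \preceq j$ by $m_{ij} \neq 0$ and show it is antisymmetric off the diagonal, so that a linear extension gives the desired permutation directly.
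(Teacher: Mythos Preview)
The paper does not prove this proposition; it is quoted verbatim from \cite[Theorem~2]{Fl} and used as a black box. So there is no ``paper's proof'' to compare against, and your proposal should be assessed on its own merits.

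Your overall strategy is sound, and the opening step (forcing $m_{ii}\in\{0,1\}$ from $\sum_k m_{ik}m_{ki}=m_{ii}$) is correct. However, two of the subsequent steps have genuine gaps, and the fix in each case is simpler than the route you sketch.

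\textbf{The $D_1$ block.} You argue that the $D_1\times D_1$ submatrix can be permuted to upper triangular with unit diagonal, and then invoke ``idempotency of a triangular matrix with unit diagonal'' to conclude it is $\mathtt{1}_b$. But the $D_1\times D_1$ block of $M$ is \emph{not} itself idempotent: the relation $M^2=M$ does not restrict to a diagonal block when there are off-block contributions. The conclusion is nonetheless true, via a one-line argument: for $i\neq j$ in $D_1$,
\[
m_{ij}=\sum_k m_{ik}m_{kj}\ \geq\ m_{ii}m_{ij}+m_{ij}m_{jj}=2m_{ij},
\]
so $m_{ij}=0$. No triangularization or cycle-exclusion is needed.

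\textbf{The split of $D_0$.} You propose to divide $D_0$ into a ``feeding into $D_1$'' part (block $a$) and a ``receiving from $D_1$'' part (block $c$), but you never verify that this is a partition, i.e.\ that no $i\in D_0$ has both a nonzero row and a nonzero column. This is the crux of the argument, and your suggested preorder approach does not obviously deliver it. A clean direct proof: since $\mathrm{rank}(M)=\mathrm{tr}(M)=b$ and the $D_1\times D_1$ block is $\mathtt{1}_b$, the rows $\{v_j:j\in D_1\}$ form a basis of the row space; writing $v_i=\sum_{j\in D_1} c_j v_j$ and evaluating at coordinates in $D_1$ gives $c_j=m_{ij}$, so $v_i=\sum_{j\in D_1} m_{ij}v_j$. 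Hence row $i\neq 0$ forces $m_{ij}>0$ for some $j\in D_1$, and symmetrically column $i\neq 0$ forces $m_{ki}>0$ for some $k\in D_1$. If both held, then either $k\neq j$ and $0=m_{kj}\geq m_{ki}m_{ij}>0$, or $k=j$ and $1=\sum_\ell m_{j\ell}m_{\ell j}\geq m_{jj}^2+m_{ji}m_{ij}>1$; contradiction either way. With this dichotomy in hand, the block shape \eqref{eq77} and the identification of the $(a,c)$ block with $AB$ follow immediately from $v_i=\sum_{j\in D_1} m_{ij}v_j$.
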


\subsection{Idempotent endomorphisms of principal representations}\label{s4.3}

By \eqref{eq3}, for every $\mathtt{i},\mathtt{j}\in\cC$ we have 
$\mathrm{Hom}_{\ccC\text{-}\mathrm{afmod}}(\mathbb{P}_{\mathtt{i}},\mathbb{P}_{\mathtt{j}})
\cong \cC(\mathtt{j},\mathtt{i})$, which means that every homomorphism from  
$\mathbb{P}_{\mathtt{i}}$ to $\mathbb{P}_{\mathtt{j}}$ is isomorphic to right multiplication by
some $1$-morphism in $\cC(\mathtt{j},\mathtt{i})$. Similarly, given a finite direct sum of 
principal $2$-representations (with, say, $k$ summands), every endomorphism $\Phi$ of this direct sum is
isomorphic to right multiplication by a $k\times k$ matrix whose coefficients are appropriate
$1$-morphisms in $\cC$. We will call a summand of any entry in this matrix a {\em summand} of $\Phi$.

Let $\displaystyle\mathbf{P}=\bigoplus_{s=1}^k \mathbb{P}_{\mathtt{i}_s}$ be a finite direct sum of 
principal $2$-representations of $\cC$ and $\Phi\in\mathrm{End}_{\ccC\text{-}\mathrm{afmod}}(\mathbf{P})$. 
We associate with $\Phi$ a matrix $M:=M_{\Phi}$ defined as follows: the matrix $M$ is a block 
$k\times k$ matrix with blocks indexed by the summands of $\mathbf{P}$. The rows of the $(r,s)$-block 
(i.e. block-row index $r$ and  block-column index $s$) are indexed by indecomposable $1$-morphisms
in $\cC(\mathtt{i}_r,\mathtt{j})$ for any $\mathtt{j}$ in $\cC$. The columns of the 
$(r,s)$-block are indexed by indecomposable 
$1$-morphisms in $\cC(\mathtt{i}_s,\mathtt{j})$ for any $\mathtt{j}$ in $\cC$. Let $\mathtt{F}$ be an indecomposable 
$1$-morphism in $\cC(\mathtt{i}_r,\mathtt{j})$ and $\mathtt{G}$ be an indecomposable 
$1$-morphism in $\cC(\mathtt{i}_s,\mathtt{j}')$. Then the $(\mathtt{F},\mathtt{G})$-entry in the 
$(r,s)$-block is given by the multiplicity of $\mathtt{G}$ as a direct summand of  $\Phi(\mathtt{F})$
(in particular, if this entry is nonzero, then $\mathtt{j}=\mathtt{j}'$).
Note that $M$ is a square matrix with non-negative integer coefficients. Furthermore, if
$\Psi\in\mathrm{End}_{\ccC\text{-}\mathrm{afmod}}(\mathbf{P})$, then 
$M_{\Phi}M_{\Psi}=M_{\Psi\circ \Phi}$.  In particular, $\Psi$  is idempotent,
by which we mean $\Psi^2\cong\Psi$,  if and only if we have $M_{\Psi}^2=M_{\Psi}$.

Assume that $\Phi\cong\Phi^2\neq 0$, then $M\neq 0$. By Proposition~\ref{prop401}, in this case all diagonal entries of
$M$ are equal to either $0$ or $1$ and there is at least one non-zero diagonal entry. 
Let $\mathrm{F}_1,\mathrm{F}_2,\dots,\mathrm{F}_m$ be a complete list of indecomposable $1$-morphisms indexing the
non-zero diagonal entries of $M$. 

\begin{lemma}\label{lem403}
\begin{enumerate}[$($a$)$]
\item\label{lem403.1} For every $i\in\{1,2,\dots,m\}$, let $\mathtt{j}_i, \mathtt{k}_i$ be such that $\mathrm{F}_i \in \cC(\mathtt{j}_i,\mathtt{k}_i)$. Then there is a unique indecomposable summand
$\Gamma_i$ of $\Phi$, given by right multiplication by an indecomposable 
$1$-morphism $\mathrm{G}_i \in \cC(\mathtt{j}_i,\mathtt{j}_i)$, and such that 
$\Gamma_i(\mathrm{F}_i)\cong \mathrm{F}_i\circ \mathrm{G}_i\cong\mathrm{F}_i\oplus \mathrm{X}_i$
for some $1$-morphism $\mathrm{X}_i \in \cC(\mathtt{j}_i,\mathtt{k}_i)$. 
\item\label{lem403.2} We have $\Phi(\mathrm{X})\cong \mathrm{X}\circ \mathrm{G}_i=0$.
\end{enumerate}
\end{lemma}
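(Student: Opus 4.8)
The plan is to read everything off the combinatorial matrix $M=M_{\Phi}$ together with Flor's normal form (Proposition~\ref{prop401}), using that $\cC$ is finitary so that Krull--Schmidt holds in each $\cC(\mathtt{i},\mathtt{j})$. Since $\Phi\cong\Phi^2$ we have $M^2=M$, so by Proposition~\ref{prop401} a permutation brings $M$ to the shape \eqref{eq77}, and the middle block $\mathtt{1}_b$ is exactly the set of $1$'s on the diagonal, i.e. the indices $\mathrm{F}_1,\dots,\mathrm{F}_m$ (so $m=b$). Reading off the three blocks gives three facts I will use: the columns of the first block vanish (such indices never occur in any $\Phi(-)$); the rows of the last block vanish (so $\Phi$ annihilates each indecomposable indexing such a row); and the row indexed by $\mathrm{F}_i$ reads $\Phi(\mathrm{F}_i)\cong\mathrm{F}_i\oplus\mathrm{R}_i$, where $\mathrm{R}_i$ is a direct sum of indecomposables all indexing rows of the last block.

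For part \eqref{lem403.1} I would proceed as follows. Write $\mathtt{j}_i=\mathtt{i}_{r_i}$ for the block carrying the diagonal $1$ of $\mathrm{F}_i$, and let $\mathrm{H}\in\cC(\mathtt{j}_i,\mathtt{j}_i)$ be the corresponding diagonal entry of the $1$-morphism matrix of $\Phi$, so that the diagonal entry of $M$ is $[\mathrm{F}_i\circ\mathrm{H}:\mathrm{F}_i]=1$. Decomposing $\mathrm{H}$ into indecomposables and using Krull--Schmidt, exactly one indecomposable summand $\mathrm{G}_i$ of $\mathrm{H}$ has $[\mathrm{F}_i\circ\mathrm{G}_i:\mathrm{F}_i]=1$, and it occurs in $\mathrm{H}$ with multiplicity one; this gives $\Gamma_i$ (right multiplication by $\mathrm{G}_i$) with $\Gamma_i(\mathrm{F}_i)=\mathrm{F}_i\circ\mathrm{G}_i\cong\mathrm{F}_i\oplus\mathrm{X}_i$. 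For uniqueness, any indecomposable summand $\mathrm{G}$ of $\Phi$ with $\mathrm{F}_i$ a summand of $\mathrm{F}_i\circ\mathrm{G}$ must lie in $\cC(\mathtt{j}_i,\mathtt{j}_i)$ by matching sources and targets, and it inserts a copy of the $1$-morphism $\mathrm{F}_i$ into $\Phi(\mathrm{F}_i)$; by the row description above the only occurrence of $\mathrm{F}_i$ lying in the middle block is the distinguished one, which forces $\mathrm{G}$ into the diagonal entry $\mathrm{H}$ and hence $\mathrm{G}\cong\mathrm{G}_i$.

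Part \eqref{lem403.2} is then immediate. Since $\Gamma_i(\mathrm{F}_i)=\mathrm{F}_i\circ\mathrm{G}_i$ is a direct summand of $\Phi(\mathrm{F}_i)\cong\mathrm{F}_i\oplus\mathrm{R}_i$ and contains $\mathrm{F}_i$ with multiplicity one, cancelling $\mathrm{F}_i$ by Krull--Schmidt shows that $\mathrm{X}_i$ is a direct summand of $\mathrm{R}_i$. Hence every indecomposable summand of $\mathrm{X}_i$ indexes a zero row of $M$, so $\Phi(\mathrm{X}_i)=0$; and as $\mathrm{X}_i\circ\mathrm{G}_i=\Gamma_i(\mathrm{X}_i)$ is a summand of $\Phi(\mathrm{X}_i)$, it vanishes as well, giving $\Phi(\mathrm{X}_i)\cong\mathrm{X}_i\circ\mathrm{G}_i=0$.

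The step I expect to be the real obstacle is the uniqueness clause in \eqref{lem403.1}. Existence and part \eqref{lem403.2} use only the normal form \eqref{eq77} and Krull--Schmidt, and uniqueness \emph{within} the diagonal block is forced by the diagonal entry being exactly $1$. The delicate point arises only when two summands $\mathbb{P}_{\mathtt{i}_s}$ share an object: then a strictly off-diagonal block of $\Phi$ might \emph{a priori} also contribute a copy of $\mathrm{F}_i$ to some $\mathrm{F}_i\circ\mathrm{G}$ with $\mathrm{G}\not\cong\mathrm{G}_i$. Excluding this is exactly where the full relation $M^2=M$ (not merely the shape of the diagonal) must be invoked, showing that any such extra occurrence is pinned to the annihilated last block and thereby traced back to the diagonal occurrence of $\mathrm{G}_i$; this is the part that will require the most care.
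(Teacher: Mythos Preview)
Your argument is correct and follows essentially the same route as the paper: reduce $M$ to Flor's normal form, read off $\Phi(\mathrm{F}_i)\cong\mathrm{F}_i\oplus\mathrm{Y}$ with $\Phi(\mathrm{Y})=0$ from the middle row-block, isolate the unique indecomposable summand contributing the single copy of $\mathrm{F}_i$, and deduce part~\eqref{lem403.2} from $\mathrm{X}_i$ being a summand of $\mathrm{Y}$. The paper's proof is terser but structurally identical.

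Your closing worry about off-diagonal blocks is over-cautious. The rows and columns of $M$ are indexed by \emph{pairs} $(s,\mathrm{F})$, so $\mathrm{F}_i$ carries its block index $r_i$ implicitly. The copy of $\mathrm{F}_i$ we are tracking is the index $(r_i,\mathrm{F}_i)$, and only the diagonal entry $(r_i,r_i)$ of the $1$-morphism matrix can produce output in block $r_i$; hence any $\Gamma$ contributing that copy already lies in $\mathrm{H}_{r_i,r_i}$, and your ``within the diagonal block'' argument finishes the job. An off-diagonal entry with $\mathtt{i}_s=\mathtt{i}_{r_i}$ could at most produce a copy of $\mathrm{F}_i$ sitting in block $s$, i.e.\ the index $(s,\mathrm{F}_i)$, which is a \emph{different} column of $M$ and, by the normal form, lies in the annihilated last block anyway. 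So no further appeal to $M^2=M$ is needed beyond what you already used; the paper does not make any such additional argument either.
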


\begin{proof}We use Proposition~\ref{prop401} to reduce $M$ to the form 
$\tilde{M}$ given by \eqref{eq77}. Then the multiplicities of $\Phi(\mathrm{F}_i)$ are given 
by the row $v$ of $\tilde{M}$ indexed by $\mathrm{F}_i$. Note that $\mathrm{F}_i$ indexes a row in the second row of blocks
of the $3\times 3$ block decomposition of $\tilde{M}$. Therefore $\Phi(\mathrm{F}_i) = \mathrm{F}_i \oplus \mathrm{Y}$ for some  $\mathrm{Y}$ with $\Phi(\mathrm{Y})=0$. Now there is a unique indecomposable summand $\Gamma_i$  of $\Phi$ which contributes the summand $\mathrm{F}_i$ above, and it is given by right multiplication with a unique indecomposable $1$-mor\-phism $\mathrm{G}_i \in \cC(\mathtt{j}_i,\mathtt{j}_i)$. Define $\mathrm{X}_i$ via $\mathrm{F}_i\circ\mathrm{G}_i\cong\mathrm{F}_i\oplus \mathrm{X}_i$. Then, with $\mathrm{X}_i$ being a summand of $\mathrm{Y}$ and $\Gamma_i$ being a summand of $\Phi$, we have that  $\Gamma_i(\mathrm{X}_i)$ is a summand of $\Phi(\mathrm{Y})$ and hence equals zero.
\end{proof}

\begin{lemma}\label{lem404}
For $i\in\{1,2,\dots,m\}$ the $1$-morphism $\mathrm{G}_i$ satisfies
$\mathrm{G}_i\circ \mathrm{G}_i\cong \mathrm{G}_i\oplus \mathrm{Q}_i$ such that 
$\mathrm{G}_i\circ \mathrm{Q}_i=0$ and $\Phi(\mathrm{Q}_i)=0$.
\end{lemma}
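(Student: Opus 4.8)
The plan is to deduce the assertion from Lemma~\ref{lem403} applied to $\mathrm{G}_i$ itself. Since that lemma is formulated for the indecomposables $\mathrm{F}_1,\dots,\mathrm{F}_m$ indexing the non-zero diagonal entries of $M$, I must first show that $\mathrm{G}_i$ is one of these, i.e. that the diagonal entry of $M$ indexed by $\mathrm{G}_i$ equals $1$, and moreover that the $1$-morphism which Lemma~\ref{lem403} associates to $\mathrm{G}_i$ is again $\mathrm{G}_i$. The computation that drives everything is
\[
\mathrm{F}_i\circ\mathrm{G}_i\circ\mathrm{G}_i\cong(\mathrm{F}_i\oplus\mathrm{X}_i)\circ\mathrm{G}_i\cong(\mathrm{F}_i\circ\mathrm{G}_i)\oplus(\mathrm{X}_i\circ\mathrm{G}_i)\cong\mathrm{F}_i\oplus\mathrm{X}_i,
\]
obtained from Lemma~\ref{lem403}\eqref{lem403.1} applied twice together with $\mathrm{X}_i\circ\mathrm{G}_i=0$ from Lemma~\ref{lem403}\eqref{lem403.2}. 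In particular $\mathrm{F}_i$ is a direct summand of $\mathrm{F}_i\circ(\mathrm{G}_i\circ\mathrm{G}_i)$.

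The hard part is to see that $\mathrm{G}_i$ is a summand of $\mathrm{G}_i\circ\mathrm{G}_i$, and this is where idempotency enters. As $\Gamma_i$ is right multiplication by $\mathrm{G}_i$ and is a summand of $\Phi$, the $1$-morphism $\mathrm{G}_i$ is a summand of the diagonal matrix entry of $\Phi$ at $\mathtt{j}_i$. The diagonal entry of the matrix of $\Phi^2$ at $\mathtt{j}_i$ then has $\mathrm{G}_i\circ\mathrm{G}_i$ as a summand (the term running $\mathtt{j}_i\to\mathtt{j}_i\to\mathtt{j}_i$), and since $\Phi\cong\Phi^2$ the same holds for $\Phi$. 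Thus $\mathrm{G}_i\circ\mathrm{G}_i$, and hence each of its indecomposable summands $\mathrm{H}$, is a summand of $\Phi$. Combining this with the previous display, $\mathrm{F}_i$ is a summand of $\bigoplus_{\mathrm{H}}\mathrm{F}_i\circ\mathrm{H}$, so at least one summand $\mathrm{H}$ of $\mathrm{G}_i\circ\mathrm{G}_i$ realises $\mathrm{F}_i$ as a summand of $\Phi(\mathrm{F}_i)$. The uniqueness clause of Lemma~\ref{lem403}\eqref{lem403.1} then forces $\mathrm{H}=\mathrm{G}_i$, so $\mathrm{G}_i$ is a summand of $\mathrm{G}_i\circ\mathrm{G}_i$; by Proposition~\ref{prop401} its diagonal entry is therefore exactly $1$ and $\mathrm{G}_i$ occurs in the list $\mathrm{F}_1,\dots,\mathrm{F}_m$. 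Running the same uniqueness once more, but with $\mathrm{G}_i$ now playing the role of $\mathrm{F}_i$, identifies $\mathrm{G}_i$ as the $1$-morphism associated to $\mathrm{G}_i$. I expect this localisation of $\mathrm{G}_i\circ\mathrm{G}_i$ back inside the matrix of $\Phi$ to be the main obstacle, since without it the uniqueness statement of Lemma~\ref{lem403} cannot be brought to bear.

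With these facts in hand, Lemma~\ref{lem403} applied to $\mathrm{G}_i$ gives immediately $\mathrm{G}_i\circ\mathrm{G}_i\cong\mathrm{G}_i\oplus\mathrm{Q}_i$ (so $\mathrm{Q}_i$ is the complementary summand), together with $\mathrm{Q}_i\circ\mathrm{G}_i=0$ and $\Phi(\mathrm{Q}_i)=0$. Finally, to pass from $\mathrm{Q}_i\circ\mathrm{G}_i=0$ to the asserted $\mathrm{G}_i\circ\mathrm{Q}_i=0$, I would evaluate $\mathrm{G}_i\circ\mathrm{G}_i\circ\mathrm{G}_i$ in two ways: bracketing on the left gives $(\mathrm{G}_i\oplus\mathrm{Q}_i)\circ\mathrm{G}_i\cong\mathrm{G}_i\oplus\mathrm{Q}_i$ because $\mathrm{Q}_i\circ\mathrm{G}_i=0$, whereas bracketing on the right gives $\mathrm{G}_i\circ(\mathrm{G}_i\oplus\mathrm{Q}_i)\cong(\mathrm{G}_i\oplus\mathrm{Q}_i)\oplus(\mathrm{G}_i\circ\mathrm{Q}_i)$. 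Cancelling $\mathrm{G}_i\oplus\mathrm{Q}_i$, which is legitimate by the Krull--Schmidt property of $\cC(\mathtt{j}_i,\mathtt{j}_i)$, yields $\mathrm{G}_i\circ\mathrm{Q}_i=0$ and completes the proof.
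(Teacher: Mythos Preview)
Your proof is correct and follows essentially the same line as the paper's: both argue that $\mathrm{G}_i$ is a summand of $\mathrm{G}_i\circ\mathrm{G}_i$ via the uniqueness clause of Lemma~\ref{lem403}\eqref{lem403.1}, identify $\mathrm{G}_i$ with some $\mathrm{F}_{j_i}$ satisfying $\mathrm{G}_{j_i}=\mathrm{G}_i$, and then deduce $\mathrm{G}_i\circ\mathrm{Q}_i=0$ from $\mathrm{Q}_i\circ\mathrm{G}_i=0$ by computing $\mathrm{G}_i^3$ two ways. Your version simply makes explicit (via $\Phi^2\cong\Phi$) the step that the indecomposable summands of $\mathrm{G}_i\circ\mathrm{G}_i$ are again summands of $\Phi$, which the paper leaves implicit.
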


\begin{proof}
From Lemma~\ref{lem403}\eqref{lem403.1} we have that $\mathrm{G}_i$ is the only indecomposable summand of $\Phi$ sending $\mathrm{F}_i$ 
to $\mathrm{F}_i$ (plus something). This yields that $\mathrm{G}_i$ is a summand of $\mathrm{G}_i\circ \mathrm{G}_i$
occurring with multiplicity one (since $\mathrm{F}_i$ appears with multiplicity one in 
$\mathrm{F}_i\circ \mathrm{G}_i$). In particular, it follows that $\mathrm{G}_i\cong\mathrm{F}_{j_i}$
for some $j_i\in \{1,2,\dots,m\}$. We also have $\mathrm{G}_{j_i} = \mathrm{G}_i$, so that 
$\mathrm{Q}_i = \mathrm{X}_{j_i}$. From Lemma~\ref{lem403}\eqref{lem403.2} we thus get 
$\Phi(\mathrm{Q}_i)=0$, in particular $\mathrm{Q}_i\circ\mathrm{G}_i=0$. 
To prove $\mathrm{G}_i\circ \mathrm{Q}_i=0$ we compute
$\mathrm{G}_i^3$ in two different ways. On the one hand,
\begin{displaymath}
\mathrm{G}_i^3\cong \mathrm{G}_i\circ(\mathrm{G}_i\oplus \mathrm{Q}_i)\cong
\mathrm{G}_i\oplus \mathrm{Q}_i\oplus \mathrm{G}_i\circ\mathrm{Q}_i.
\end{displaymath}
On the other hand,
\begin{displaymath}
\mathrm{G}_i^3\cong (\mathrm{G}_i\oplus \mathrm{Q}_i)\circ\mathrm{G}_i\cong
\mathrm{G}_i\oplus \mathrm{Q}_i\oplus \mathrm{Q}_i\circ\mathrm{G}_i.
\end{displaymath}
Now $\mathrm{G}_i\circ \mathrm{Q}_i=0$ follows from $\mathrm{Q}_i\circ \mathrm{G}_i=0$ by comparing the
two isomorphisms above.
\end{proof}

\begin{lemma}\label{lem405}
Let $i,j\in \{1,2,\dots,m\}$ be such that $\mathrm{G}_i\not\cong\mathrm{G}_j$.
\begin{enumerate}[$($a$)$]
\item\label{lem405.1} We have $\Phi(\mathrm{G}_i\circ \mathrm{G}_j)=0$. 
\item\label{lem405.2} We have $\mathrm{G}_i\circ \mathrm{G}_j=0$.
\end{enumerate}
\end{lemma}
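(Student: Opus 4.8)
The plan is to deduce part \eqref{lem405.2} from part \eqref{lem405.1} by exploiting that each $\Gamma_i$ is an honest direct summand of $\Phi$ acting by right multiplication, so that for any $1$-morphism $\mathrm{Z}$ the object $\Gamma_i(\mathrm{Z})=\mathrm{Z}\circ\mathrm{G}_i$ is a direct summand of $\Phi(\mathrm{Z})$. At the outset I would dispose of the trivial case: since $\mathrm{G}_i\in\cC(\mathtt{j}_i,\mathtt{j}_i)$ and $\mathrm{G}_j\in\cC(\mathtt{j}_j,\mathtt{j}_j)$, the composite $\mathrm{G}_i\circ\mathrm{G}_j$ vanishes automatically unless $\mathtt{j}_i=\mathtt{j}_j$, so I may assume $\mathtt{j}_i=\mathtt{j}_j$ throughout. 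The key structural input recorded in the proof of Lemma~\ref{lem404} is that $\mathrm{G}_i\cong\mathrm{F}_p$ for some $p\in\{1,\dots,m\}$ with $\mathrm{G}_p=\mathrm{G}_i$; this will let me feed $\mathrm{F}_p$ into the uniqueness statement of Lemma~\ref{lem403}\eqref{lem403.1}.

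For \eqref{lem405.1}, I would write $\mathrm{G}_i\circ\mathrm{G}_j\cong\mathrm{F}_p\circ\mathrm{G}_j=\Gamma_j(\mathrm{F}_p)$, which is a direct summand of $\Phi(\mathrm{F}_p)$. By Lemma~\ref{lem403} applied to the index $p$ we have $\Phi(\mathrm{F}_p)\cong\mathrm{F}_p\oplus\mathrm{Y}_p$ with $\Phi(\mathrm{Y}_p)=0$, and $\mathrm{F}_p$ occurs here with multiplicity exactly one because the corresponding diagonal entry of the reduced matrix equals $1$. Since $\mathrm{G}_j\not\cong\mathrm{G}_i=\mathrm{G}_p$, the summand $\Gamma_j$ is distinct from $\Gamma_p$, so the uniqueness clause of Lemma~\ref{lem403}\eqref{lem403.1} forces $\Gamma_j(\mathrm{F}_p)$ to contain no copy of $\mathrm{F}_p$; hence $\mathrm{G}_i\circ\mathrm{G}_j$ is a direct summand of $\mathrm{Y}_p$. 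Applying $\Phi$ and using additivity then gives $\Phi(\mathrm{G}_i\circ\mathrm{G}_j)=0$.

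For \eqref{lem405.2}, I would apply $\Gamma_j$ to $\mathrm{G}_i\circ\mathrm{G}_j$: since $\Gamma_j$ is a summand of $\Phi$, the object $(\mathrm{G}_i\circ\mathrm{G}_j)\circ\mathrm{G}_j=\Gamma_j(\mathrm{G}_i\circ\mathrm{G}_j)$ is a summand of $\Phi(\mathrm{G}_i\circ\mathrm{G}_j)$, which is zero by \eqref{lem405.1}. Now Lemma~\ref{lem404} gives $\mathrm{G}_j\circ\mathrm{G}_j\cong\mathrm{G}_j\oplus\mathrm{Q}_j$, so by associativity $0\cong\mathrm{G}_i\circ(\mathrm{G}_j\circ\mathrm{G}_j)\cong(\mathrm{G}_i\circ\mathrm{G}_j)\oplus(\mathrm{G}_i\circ\mathrm{Q}_j)$, whence each summand vanishes and in particular $\mathrm{G}_i\circ\mathrm{G}_j=0$. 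The only delicate points I expect are bookkeeping ones: keeping the source and target objects matched so that every composite is legitimately defined, and invoking Krull--Schmidt in $\mathfrak{A}_{\Bbbk}^f$ to pass from ``$\Gamma_j(\mathrm{F}_p)$ contains no $\mathrm{F}_p$'' to ``$\Gamma_j(\mathrm{F}_p)$ is a summand of $\mathrm{Y}_p$'', which rests on $\mathrm{F}_p$ appearing in $\Phi(\mathrm{F}_p)$ with multiplicity one.
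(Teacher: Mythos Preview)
Your argument is correct and follows essentially the same route as the paper's proof. The only cosmetic difference is that you make explicit the identification $\mathrm{G}_i\cong\mathrm{F}_p$ (with $\mathrm{G}_p=\mathrm{G}_i$) drawn from the proof of Lemma~\ref{lem404}, whereas the paper simply asserts that $\mathrm{G}_i$ indexes a row in the middle block of $\tilde{M}$; after that, both proofs use the uniqueness clause of Lemma~\ref{lem403}\eqref{lem403.1} to place $\mathrm{G}_i\circ\mathrm{G}_j$ inside the $\Phi$-annihilated part $\mathrm{Y}$, and part~\eqref{lem405.2} is handled identically via $(\mathrm{G}_i\circ\mathrm{G}_j)\circ\mathrm{G}_j=0$ together with $\mathrm{G}_j^2\cong\mathrm{G}_j\oplus\mathrm{Q}_j$.
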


\begin{proof}
We use Proposition~\ref{prop401} to reduce $M$ to the form 
$\tilde{M}$ given by \eqref{eq77}. Note that $\mathrm{G}_i$ indexes a row in the second block
of the $3\times 3$ block decomposition of $\tilde{M}$, so $\Phi(\mathrm{G}_i)=\mathrm{G}_i \oplus \mathrm{Y}$ with $\Phi(\mathrm{Y})=0$. 
The composition $\mathrm{G}_i\circ \mathrm{G}_j$ is a direct summand of $\Phi(\mathrm{G}_i)$. Since
$\mathrm{G}_i\not\cong\mathrm{G}_j$, the composition $\mathrm{G}_i\circ \mathrm{G}_j$ does not contain
$\mathrm{G}_i$ as a direct summand (by Lemma~\ref{lem403}\eqref{lem403.1}) and is hence contained in $\mathrm{Y}$, implying that $\Phi(\mathrm{G}_i\circ \mathrm{G}_j)=0$. This proves claim \eqref{lem405.1}.

As $(\mathrm{G}_i\circ \mathrm{G}_j)\circ \mathrm{G}_j$ is a summand of $\Phi(\mathrm{G}_i\circ \mathrm{G}_j)$,
we obtain $(\mathrm{G}_i\circ \mathrm{G}_j)\circ \mathrm{G}_j=0$ by \eqref{lem405.1}. On the other hand,
$\mathrm{G}_i\circ (\mathrm{G}_j\circ \mathrm{G}_j)$ contains, as a summand, $\mathrm{G}_i\circ \mathrm{G}_j$
by Lemma~\ref{lem404}. This implies claim \eqref{lem405.2} and completes the proof.
\end{proof}

Note that it is possible that $\mathrm{G}_i\cong \mathrm{G}_j$ for $i\neq j$, where $i,j\in\{1,2,\dots,m\}$.
Therefore we define $\Gamma$ to be the multiplicity free direct sum of all $\Gamma_i$ for $i\in\{1,2,\dots,m\}$.
Similarly, define $\Theta$ such that $\Gamma^2\cong \Gamma\oplus \Theta$. From the above discussion it follows
that the endomorphism $\Theta$ is given by putting, for each $i$, a copy of $\mathrm{Q}_i$ 
(as given by Lemma~\ref{lem404}) in the appropriate places.
Define $\Pi$ to be a summand of $\Phi$ such that $\Phi=\Gamma\oplus\Theta\oplus\Pi$. As an immediate
corollary from  Lemma~\ref{lem404} we have.

\begin{corollary}\label{lem406}
We have $\Phi\Theta=0$.
\end{corollary}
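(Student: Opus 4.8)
The plan is to read off both endomorphisms in their right-multiplication form and to reduce the entire statement to the vanishing $\Phi(\mathrm{Q}_i)=0$ already recorded in Lemma~\ref{lem404}; the only thing to watch is the direction of composition and the bookkeeping of where each $\mathrm{Q}_i$ sits.

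First I would recall, from the paragraph preceding the corollary, that $\Theta$ is (isomorphic to) the endomorphism of $\mathbf{P}$ given by right multiplication by $\bigoplus_{i=1}^m \mathrm{Q}_i$, each $\mathrm{Q}_i$ being placed in the component corresponding to $\mathrm{G}_i$, while $\Phi$ is right multiplication by its defining matrix of $1$-morphisms. Since $\Phi\Theta$ denotes $\Phi\circ\Theta$ (apply $\Theta$ first, exactly as in the proof of Proposition~\ref{prop92}), for an indecomposable $1$-morphism $\mathrm{F}$ regarded as an object of $\mathbf{P}$ the composite evaluates to $(\Phi\Theta)(\mathrm{F})\cong\Phi(\mathrm{F}\circ\mathrm{Q}_i)$ in the relevant component. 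Because $\Phi$ acts by right multiplication, horizontal composition is associative and $\Phi$ commutes with left composition by $\mathrm{F}$, so $\Phi(\mathrm{F}\circ\mathrm{Q}_i)\cong\mathrm{F}\circ\Phi(\mathrm{Q}_i)$.

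Now Lemma~\ref{lem404} gives $\Phi(\mathrm{Q}_i)=0$ for every $i\in\{1,2,\dots,m\}$, so $\mathrm{F}\circ\Phi(\mathrm{Q}_i)=0$ in each component; thus $\Phi\Theta$ sends every object of $\mathbf{P}$ to zero and is therefore the zero strong transformation. Equivalently, by the Yoneda equivalence \eqref{eq3} it suffices to check that $\Phi\Theta$ vanishes on each identity $1$-morphism $\mathbbm{1}_{\mathtt{i}}$, which is the case since $\Theta(\mathbbm{1}_{\mathtt{i}})\cong\mathrm{Q}_i$ and $\Phi(\mathrm{Q}_i)=0$.

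I do not expect a genuine obstacle here, which is why the paper calls it an ``immediate corollary''. The one point requiring care is the matching of conventions: one must confirm that the composite $\Phi\Theta$ is right multiplication by $\Phi$ \emph{applied to} the $1$-morphisms defining $\Theta$, and not the reverse order, so that it is precisely the datum $\Phi(\mathrm{Q}_i)=0$ of Lemma~\ref{lem404}---rather than a composition such as $\mathrm{Q}_i\circ\mathrm{G}_i$ or $\mathrm{G}_i\circ\mathrm{Q}_i$---that is being invoked.
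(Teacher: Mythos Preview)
Your argument is correct and is exactly the unpacking of the paper's one-line justification: the paper offers no separate proof beyond ``As an immediate corollary from Lemma~\ref{lem404}'', and your reduction $(\Phi\Theta)(\mathrm{F})\cong\mathrm{F}\circ\Phi(\mathrm{Q}_i)=0$ is precisely how that sentence is to be read. The only minor imprecision is that $\Theta(\mathbbm{1}_{\mathtt{i}_s})$ is in general a direct sum of several $\mathrm{Q}_i$'s (one for each $\mathrm{G}_i$ living in that diagonal block), not a single one, but since $\Phi(\mathrm{Q}_i)=0$ for every $i$ this does not affect the conclusion.
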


\begin{lemma}\label{lem409}
We have the following identities:
\begin{enumerate}[$($a$)$]
\item\label{lem409.1} $\Gamma\Pi\Gamma=\Theta\Pi=\Gamma\Pi^2=\Pi^2\Gamma=0$. 
\item\label{lem409.2} $\Pi\cong\Gamma\Pi\oplus\Pi\Gamma\oplus\Pi^2$. 
\item\label{lem409.3} $\Pi^3=0$. 
\item\label{lem409.4} $\Pi^2\cong\Pi\Gamma\Pi$. 
\end{enumerate}
\end{lemma}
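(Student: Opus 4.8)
The plan is to translate everything into the non-negative integer matrices of Subsection~\ref{s4.3} and to exploit the block form \eqref{eq77} of $M=M_\Phi$. Since $\Gamma$, $\Theta$, $\Pi$ and all of their composites are (sums of summands of) right multiplications, each is recorded faithfully by its multiplicity matrix; in particular such a composite vanishes if and only if its matrix does, two of them are isomorphic if and only if their matrices agree, and $M_{X\circ Y}=M_Y M_X$. From $\Phi\cong\Gamma\oplus\Theta\oplus\Pi$ we get $M_\Phi=M_\Gamma+M_\Theta+M_\Pi$ with all three summands non-negative, and after conjugating by the permutation of Proposition~\ref{prop401} we may assume $M_\Phi$ has the form \eqref{eq77} with diagonal blocks $\mathtt{0}_a,\mathtt{1}_b,\mathtt{0}_c$.

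First I would pin down the block shapes of $M_\Gamma,M_\Theta,M_\Pi$. Wherever $M_\Phi$ vanishes, all three summands vanish by non-negativity; this kills the whole lower block-triangle, the $\mathtt{0}_a$- and $\mathtt{0}_c$-diagonal blocks, and everything off the diagonal inside the $\mathtt{1}_b$-block. On the diagonal of the $\mathtt{1}_b$-block, Lemma~\ref{lem403}\eqref{lem403.1} shows that the summand $\mathrm{F}_i$ of $\Phi(\mathrm{F}_i)$ is contributed by $\Gamma_i$ alone, so these diagonal ones all sit in $M_\Gamma$ while $M_\Theta$ and $M_\Pi$ have zero diagonal. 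Hence $M_\Gamma$ is block-upper-triangular with middle diagonal block $\mathtt{1}_b$ and with vanishing first block-column and last block-row, while $M_\Theta$ and $M_\Pi$ are strictly block-upper-triangular.

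Three of the four identities in \eqref{lem409.1} are then immediate matrix computations: strict block-upper-triangularity makes $M_\Pi^2$ supported only in the top-right corner and gives $M_\Pi^3=0$ (which already yields \eqref{lem409.3}); combining this with the vanishing first block-column and last block-row of $M_\Gamma$ forces $M_\Gamma M_\Pi M_\Gamma=M_\Pi^2M_\Gamma=M_\Gamma M_\Pi^2=0$, i.e. $\Gamma\Pi\Gamma=\Gamma\Pi^2=\Pi^2\Gamma=0$. To set up the last identity I would expand $\Phi\cong\Phi^2$ using $\Gamma^2\cong\Gamma\oplus\Theta$, Corollary~\ref{lem406} (which gives $\Gamma\Theta=\Theta^2=\Pi\Theta=0$), and $\Theta\Gamma=0$; this last relation comes from Lemmas~\ref{lem404} and \ref{lem405}, since $\Theta\Gamma$ is right multiplication by $\bigoplus_{i,j}\mathrm{G}_i\circ\mathrm{Q}_j$, each term of which vanishes by Lemma~\ref{lem404} when $i=j$ and by Lemma~\ref{lem405} applied to $\mathrm{G}_i\circ\mathrm{G}_j\circ\mathrm{G}_j$ otherwise. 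After cancellation this leaves the master relation $\Pi\cong\Gamma\Pi\oplus\Pi\Gamma\oplus\Theta\Pi\oplus\Pi^2$.

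The one genuinely non-formal point is $\Theta\Pi=0$: it is invisible to the triangular structure, since the corresponding matrix product $M_\Pi M_\Theta$ can have a nonzero corner, so it is the step I expect to be the main obstacle. Here I would substitute the master relation into the right-hand factor of $\Gamma\Pi=\Gamma\circ\Pi$, obtaining $\Gamma\Pi\cong\Gamma^2\Pi\oplus\Gamma\Pi\Gamma\oplus\Gamma\Theta\Pi\oplus\Gamma\Pi^2$; the last three summands vanish by the identities already proved together with $\Gamma\Theta=0$, while $\Gamma^2\Pi\cong\Gamma\Pi\oplus\Theta\Pi$, so $\Gamma\Pi\cong\Gamma\Pi\oplus\Theta\Pi$ and hence $\Theta\Pi=0$, completing \eqref{lem409.1}. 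Then \eqref{lem409.2} is immediate from the master relation and $\Theta\Pi=0$. Finally, for \eqref{lem409.4} I would expand $\Pi^2=\Pi\circ\Pi$ via \eqref{lem409.2}, use $\Pi^2\Gamma=0$ to get $\Pi^2\cong\Pi\Gamma\Pi\oplus\Pi^3$, and conclude with $\Pi^3=0$ from \eqref{lem409.3}.
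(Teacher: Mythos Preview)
Your proof is correct; the route differs from the paper's mainly in where you invoke the block matrix form. The paper first derives the ``master relation'' \eqref{e4091} from $\Phi\cong\Phi^2$ and then obtains all four vanishings in \eqref{lem409.1} by substituting \eqref{e4091} back into its own summands $\Gamma\Pi$ and $\Pi\Gamma$ and cancelling; the matrix argument is used only for $\Pi^3=0$. You instead read off the shapes of $M_\Gamma$, $M_\Theta$, $M_\Pi$ directly (first block-column and last block-row of $M_\Gamma$ vanish because those of $M_\Phi$ do; the middle diagonal $\mathtt{1}_b$ lands in $M_\Gamma$ by Lemma~\ref{lem403}\eqref{lem403.1}), and then three of the four identities in \eqref{lem409.1} as well as \eqref{lem409.3} drop out as block computations. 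For the remaining identity $\Theta\Pi=0$ your substitution of the master relation into $\Gamma\Pi$ is exactly the paper's comparison of \eqref{e4091} with \eqref{e4092}, and your treatment of \eqref{lem409.4} matches the paper's. One small bonus of your write-up is that you make $\Theta\Gamma=0$ explicit and justify it via Lemmas~\ref{lem404} and \ref{lem405}; the paper silently uses this when expanding $\Phi^2$ in the first displayed line of its proof. In your justification, note that the case $i\ne j$ with $\mathrm{G}_i\cong\mathrm{G}_j$ is not covered by Lemma~\ref{lem405} as stated, but then $\mathrm{G}_i\circ\mathrm{Q}_j\cong\mathrm{G}_j\circ\mathrm{Q}_j=0$ by Lemma~\ref{lem404}, so the conclusion still holds.
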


\begin{proof}
Using Corollary~\ref{lem406}, we compute
\begin{displaymath}
\Gamma\oplus\Theta\oplus\Pi=\Phi\cong\Phi^2=\Gamma\oplus\Theta\oplus \Gamma\Pi\oplus\Pi\Gamma\oplus\Theta\Pi\oplus\Pi^2
\end{displaymath}
which implies
\begin{equation}\label{e4091}
\Pi\cong\Gamma\Pi\oplus\Pi\Gamma\oplus\Theta\Pi\oplus\Pi^2.
\end{equation}
Inserting the right hand side of \eqref{e4091} into the first summand on the right we obtain
\begin{equation}\label{e4092}
\Pi\cong\Gamma(\Gamma\Pi\oplus\Pi\Gamma\oplus\Theta\Pi\oplus\Pi^2)\oplus\Pi\Gamma\oplus\Theta\Pi\oplus\Pi^2. 
\end{equation}
Using Corollary~\ref{lem406} and comparing the right hand sides of \eqref{e4091} and \eqref{e4092} gives the identities
$\Gamma\Pi\Gamma=\Gamma\Pi\Theta=\Gamma\Pi^2=0$.
Inserting the right hand side of \eqref{e4091} into the second summand on the right we obtain
\begin{equation}\label{e4093}
\Pi\cong\Gamma\Pi\oplus(\Gamma\Pi\oplus\Pi\Gamma\oplus\Theta\Pi\oplus\Pi^2)\Gamma\oplus\Theta\Pi\oplus\Pi^2. 
\end{equation}
Using Corollary~\ref{lem406} and comparing the right hand sides of \eqref{e4091} and \eqref{e4093} gives the identities
$\Theta\Pi=\Pi^2\Gamma=0$. This proves claim \eqref{lem409.1}. Claim \eqref{lem409.2} follows from \eqref{e4091}
and claim \eqref{lem409.1}.

Let $N$ be the matrix associated to $\Pi$ (similarly to how $M$ is associated to $\Phi$). Then both
$N$ and $M-N$ have non-negative integer coefficients. Let $S$ be a permutation matrix such that 
$S^{-1}MS=\tilde{M}$ and set $\tilde{N}=S^{-1}NS$. Then both $\tilde{N}$ and $\tilde{M}-\tilde{N}$ have 
non-negative integer coefficients and from the definition of $\Pi$ it follows that $\tilde{N}$ has the form
\begin{displaymath}
\left(\begin{array}{ccc}\mathtt{0}_a&A'&B'\\\mathtt{0}&\mathtt{0}_b&C'\\
\mathtt{0}&\mathtt{0}&\mathtt{0}_c\end{array}\right). 
\end{displaymath}
Clearly, $\tilde{N}^3=0$ and thus $\Pi^3=0$ proving claim \eqref{lem409.3}.

Inserting \eqref{lem409.2} into one of the factors in $\Pi^2=\Pi^2$,  we obtain
\begin{equation}\label{e4094}
\Pi^2\cong(\Gamma\Pi\oplus\Pi\Gamma\oplus\Pi^2)\Pi. 
\end{equation}
Now claim \eqref{lem409.4} follows from claims \eqref{lem409.1} and \eqref{lem409.3}. This completes the proof.
\end{proof}

\begin{corollary}\label{cor410}
We have the following isomorphisms:
\begin{enumerate}[$($a$)$]
\item\label{lem410.1} $(\Gamma\oplus\Theta)^2\cong\Gamma\oplus\Theta$. 
\item\label{lem410.2} $(\Gamma\oplus\Theta\oplus\Pi\Gamma)^2\cong\Gamma\oplus\Theta\oplus\Pi\Gamma$. 
\item\label{lem410.3} $(\Gamma\oplus\Theta\oplus\Pi\Gamma)(\Gamma\oplus\Theta)\cong\Gamma\oplus\Theta\oplus\Pi\Gamma$. 
\item\label{lem410.4} $(\Gamma\oplus\Theta)(\Gamma\oplus\Theta\oplus\Pi\Gamma)\cong\Gamma\oplus\Theta$. 
\item\label{lem410.5} $\Phi(\Gamma\oplus\Theta\oplus\Pi\Gamma)\cong\Gamma\oplus\Theta\oplus\Pi\Gamma$. 
\item\label{lem410.7} $(\Gamma\oplus\Theta\oplus\Pi\Gamma)\Phi\cong\Phi$. 
\end{enumerate} 
\end{corollary}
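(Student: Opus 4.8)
The plan is to verify all six isomorphisms by direct formal computation, expanding each product via bilinearity of horizontal composition over $\oplus$ and reducing with a small table of already-known products; since $\cC$ is finitary, every $1$-morphism decomposes essentially uniquely into indecomposables (Krull--Schmidt), so throughout I may cancel common direct summands from the two sides of an isomorphism. First I would assemble that table. From the definition of $\Theta$ we have $\Gamma^2\cong\Gamma\oplus\Theta$; Corollary~\ref{lem406} unpacks, via $\Phi\Theta=(\Gamma\oplus\Theta\oplus\Pi)\Theta$, into $\Gamma\Theta=\Theta^2=\Pi\Theta=0$; and Lemma~\ref{lem409}\eqref{lem409.1} supplies $\Theta\Pi=\Gamma\Pi\Gamma=\Gamma\Pi^2=\Pi^2\Gamma=0$. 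I would also keep at hand $\Pi\cong\Gamma\Pi\oplus\Pi\Gamma\oplus\Pi^2$ and $\Pi^2\cong\Pi\Gamma\Pi$ from Lemma~\ref{lem409}\eqref{lem409.2} and \eqref{lem409.4}.

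The one product missing from this list is $\Theta\Gamma$, and pinning it down is the only step that needs an idea rather than bookkeeping. I would obtain $\Theta\Gamma=0$ exactly as $\mathrm{G}_i\circ\mathrm{Q}_i=0$ was obtained in Lemma~\ref{lem404}, by computing $\Gamma^3$ two ways: on one hand $\Gamma^3\cong\Gamma(\Gamma\oplus\Theta)\cong\Gamma^2\oplus\Gamma\Theta\cong\Gamma\oplus\Theta$, on the other $\Gamma^3\cong(\Gamma\oplus\Theta)\Gamma\cong\Gamma^2\oplus\Theta\Gamma\cong\Gamma\oplus\Theta\oplus\Theta\Gamma$, and cancelling gives $\Theta\Gamma=0$. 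Abbreviating $E:=\Gamma\oplus\Theta$ and $F:=\Gamma\oplus\Theta\oplus\Pi\Gamma$, three auxiliary products then drop out at once: $E\Pi\Gamma\cong\Gamma\Pi\Gamma\oplus\Theta\Pi\Gamma=0$; $\Pi\Gamma E\cong\Pi\Gamma^2\oplus\Pi\Gamma\Theta\cong\Pi\Gamma$, using $\Pi\Theta=0$ and $\Gamma\Theta=0$; and $\Pi\Gamma\Pi\Gamma\cong\Pi^2\Gamma=0$, using $\Pi\Gamma\Pi\cong\Pi^2$.

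With the table complete, each assertion becomes a one-line expansion. Assertion~\eqref{lem410.1} is $E^2\cong\Gamma^2\oplus\Gamma\Theta\oplus\Theta\Gamma\oplus\Theta^2\cong\Gamma\oplus\Theta\cong E$. For~\eqref{lem410.2} I expand $F^2\cong E^2\oplus E\Pi\Gamma\oplus\Pi\Gamma E\oplus\Pi\Gamma\Pi\Gamma\cong E\oplus\Pi\Gamma\cong F$, and then~\eqref{lem410.3} and~\eqref{lem410.4} are the shorter $FE\cong E^2\oplus\Pi\Gamma E\cong E\oplus\Pi\Gamma\cong F$ and $EF\cong E^2\oplus E\Pi\Gamma\cong E$. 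For~\eqref{lem410.5} I note $\Phi E\cong\Phi\Gamma\oplus\Phi\Theta\cong\Phi\Gamma$ by Corollary~\ref{lem406}, while $\Phi\Gamma\cong\Gamma^2\oplus\Theta\Gamma\oplus\Pi\Gamma\cong F$ and $\Phi\Pi\Gamma\cong\Gamma\Pi\Gamma\oplus\Theta\Pi\Gamma\oplus\Pi^2\Gamma=0$, so $\Phi F\cong F$. Finally, for~\eqref{lem410.7} I compute $\Theta\Phi\cong\Theta\Gamma\oplus\Theta^2\oplus\Theta\Pi=0$, $\Gamma\Phi\cong\Gamma^2\oplus\Gamma\Theta\oplus\Gamma\Pi\cong\Gamma\oplus\Theta\oplus\Gamma\Pi$, and $\Pi\Gamma\Phi\cong\Pi\Gamma^2\oplus\Pi\Gamma\Theta\oplus\Pi\Gamma\Pi\cong\Pi\Gamma\oplus\Pi^2$, whence $F\Phi\cong\Gamma\oplus\Theta\oplus\Gamma\Pi\oplus\Pi\Gamma\oplus\Pi^2\cong\Gamma\oplus\Theta\oplus\Pi\cong\Phi$ by Lemma~\ref{lem409}\eqref{lem409.2}.

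The genuine difficulty is concentrated in two places: the derivation of $\Theta\Gamma=0$ above, and the repeated appeal to cancellation of common summands. The latter is legitimate precisely because finitariness of $\cC$ forces the relevant Hom-categories into $\mathfrak{A}_{\Bbbk}^f$, where the Krull--Schmidt theorem holds; one should state this once and invoke it silently thereafter. Everything else is just distributivity of composition over $\oplus$ together with associativity, so no further obstacle is expected.
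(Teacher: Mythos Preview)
Your proof is correct and follows essentially the same expand-and-reduce strategy as the paper, which merely cites the relevant earlier results without writing out the computations. The one place you go slightly beyond the paper is in isolating and proving $\Theta\Gamma=0$ via the $\Gamma^3$ argument; the paper leaves this identity implicit in its appeal to ``the definitions'' (it follows componentwise from Lemmas~\ref{lem404} and~\ref{lem405}), so your derivation is a cleaner, more self-contained way to obtain the same fact.
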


\begin{proof}
Claim \eqref{lem410.1} follows from the definitions and Corollary~\ref{lem406}.
Claim \eqref{lem410.2} follows from claim \eqref{lem410.1}, Corollary~\ref{lem406} and Lemma~\ref{lem409}\eqref{lem409.1}.
Claim \eqref{lem410.3} follows from Corollary~\ref{lem406}.
Claims \eqref{lem410.4} and \eqref{lem410.5} follow from Corollary~\ref{lem406} and Lemma~\ref{lem409}\eqref{lem409.1}.
Finally, claim \eqref{lem410.7} follows from Corollary~\ref{lem406} and
Lemma~\ref{lem409}\eqref{lem409.1}, \eqref{lem409.2} and \eqref{lem409.4}.
\end{proof}

\subsection{Projective $2$-subrepresentations of principal $2$-representations}\label{s4.4}

For $\mathtt{i}\in\cC$ fix an indecomposable $1$-morphism $\mathrm{G}\in \cC(\mathtt{i},\mathtt{i})$
and also a $1$-morphism $\mathrm{Q}\in \cC(\mathtt{i},\mathtt{i})$ such that 
\begin{displaymath}
\mathrm{G}\circ\mathrm{G}\cong \mathrm{G}\oplus \mathrm{Q}\quad\text{ and }\quad 
\mathrm{G}\circ\mathrm{Q}=\mathrm{Q}\circ\mathrm{G}=\mathrm{Q}\circ\mathrm{Q}=0.
\end{displaymath}
Then $\mathrm{E}:=\mathrm{G}\circ\mathrm{G}$ is a weakly idempotent $1$-morphism in $\cC(\mathtt{i},\mathtt{i})$
(in the sense that $\mathrm{E}\circ\mathrm{E}\cong\mathrm{E}$).
Assume now that $\cC$ is cancellative. In this case the  
functor $({}_-\circ\mathrm{E})_{\mathtt{j}}$ is injective when restricted to objects of
$\mathbb{P}_{\mathtt{i}}(\mathtt{j})$. Hence we can consider the corresponding image $\mathbb{P}_{\mathtt{i}}(\mathtt{j})\circ\mathrm{E}$.
The left action of $\cC$ leaves $\mathbb{P}_{\mathtt{i}}({}_-)\circ\mathrm{E}$ invariant. We denote by
$\mathbf{P}_{\mathtt{i},\mathrm{E}}$ this $2$-subrepresentation of $\mathbb{P}_{\mathtt{i}}$
and by $\Lambda$ the corresponding natural inclusion. Denote by 
$\Lambda':\mathbb{P}_{\mathtt{i}}\to\mathbf{P}_{\mathtt{i},\mathrm{E}}$ the strict transformation
given by sending $\mathbbm{1}_{\mathtt{i}}$ to $\mathrm{E}$.

\begin{proposition}\label{prop441}
The composition $\Lambda'\Lambda$ is isomorphic to the identity on $\mathbf{P}_{\mathtt{i},\mathrm{E}}$.
In particular, $\mathbf{P}_{\mathtt{i},\mathrm{E}}$ is projective.
\end{proposition}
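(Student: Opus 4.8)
The plan is to reduce everything to a single isomorphism and then quote Proposition~\ref{prop92}. By construction $\mathbf{P}_{\mathtt{i},\mathrm{E}}$ comes equipped with $\Lambda':\mathbb{P}_{\mathtt{i}}\to\mathbf{P}_{\mathtt{i},\mathrm{E}}$ and $\Lambda:\mathbf{P}_{\mathtt{i},\mathrm{E}}\to\mathbb{P}_{\mathtt{i}}$, so once we know $\Lambda'\Lambda\cong\mathrm{id}_{\mathbf{P}_{\mathtt{i},\mathrm{E}}}$ we may take $\Phi=\Lambda'$ and $\Psi=\Lambda$ in Proposition~\ref{prop92}: this exhibits $\mathbf{P}_{\mathtt{i},\mathrm{E}}$ as a retract of the single principal $2$-representation $\mathbb{P}_{\mathtt{i}}$, which is exactly projectivity. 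Hence the entire content is the first isomorphism. First I would unwind the two transformations componentwise: $\Lambda_{\mathtt{j}}$ is the inclusion of the image of $({}_-\circ\mathrm{E})_{\mathtt{j}}$, while $\Lambda'_{\mathtt{j}}$ sends $\mathrm{H}\in\cC(\mathtt{i},\mathtt{j})$ to $\mathrm{H}\circ\mathrm{E}$. Thus $(\Lambda'\Lambda)_{\mathtt{j}}$ sends an object $\mathrm{H}\circ\mathrm{E}$ of $\mathbf{P}_{\mathtt{i},\mathrm{E}}(\mathtt{j})$ to $\mathrm{H}\circ\mathrm{E}\circ\mathrm{E}$ and acts on $2$-morphisms by $\circ_0\,\mathrm{id}_{\mathrm{E}}$; in other words $\Lambda'\Lambda$ is right multiplication by $\mathrm{E}$, restricted to the subrepresentation. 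As both $\Lambda$ and $\Lambda'$ are strict, so is $\Lambda'\Lambda$, and the goal becomes the construction of an invertible modification $\xi\colon\Lambda'\Lambda\to\mathrm{id}$.

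Next I would exploit that $\mathbb{P}_{\mathtt{i}}$ is principal, so that everything is governed by the generator. The object $\mathrm{E}=\Lambda'_{\mathtt{i}}(\mathbbm{1}_{\mathtt{i}})$ generates $\mathbf{P}_{\mathtt{i},\mathrm{E}}$ under the left $\cC$-action, and by the Yoneda equivalence~\eqref{eq3} a modification out of $\Lambda'\Lambda$ is determined by its value at $\mathrm{E}$. Concretely, I would produce an invertible $2$-morphism $\xi_{\mathrm{E}}\colon\mathrm{E}\circ\mathrm{E}\to\mathrm{E}$ lying in $\mathbf{P}_{\mathtt{i},\mathrm{E}}$, and then propagate it by the action, setting $\xi_{\mathrm{H}\circ\mathrm{E}}:=\mathrm{id}_{\mathrm{H}}\circ_0\xi_{\mathrm{E}}$. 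Here cancellativity of $\cC$ enters in an essential way: it guarantees that each object of $\mathbf{P}_{\mathtt{i},\mathrm{E}}(\mathtt{j})$ has a \emph{unique} presentation as $\mathrm{H}\circ\mathrm{E}$, so that the formula for $\xi_{\mathrm{H}\circ\mathrm{E}}$ is unambiguous and compatible with the left action $\mathrm{F}\circ(\mathrm{H}\circ\mathrm{E})=(\mathrm{F}\circ\mathrm{H})\circ\mathrm{E}$.

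The heart of the argument, and the step I expect to be the main obstacle, is carrying this out \emph{inside the morphism spaces of} $\mathbf{P}_{\mathtt{i},\mathrm{E}}$. Since this $2$-subrepresentation is the image of $({}_-\circ\mathrm{E})$, its $2$-morphisms are only those of the form $\alpha\circ_0\mathrm{id}_{\mathrm{E}}$, which is strictly smaller than the full space of $2$-morphisms of $\cC$ between the corresponding objects; so one must check both that the candidate $\xi_{\mathrm{E}}$ (built from the weak idempotency isomorphism $\mathrm{E}\circ\mathrm{E}\cong\mathrm{E}$, using the section $u\colon\mathrm{G}\to\mathrm{G}\circ\mathrm{G}$ and retraction $p\colon\mathrm{G}\circ\mathrm{G}\to\mathrm{G}$ arising from $\mathrm{G}$ being a summand of $\mathrm{E}$) actually lands in this image and that it is invertible \emph{there}. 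This is exactly where the relations $\mathrm{G}\circ\mathrm{Q}=\mathrm{Q}\circ\mathrm{G}=\mathrm{Q}\circ\mathrm{Q}=0$ do the work: every error term that obstructs invertibility or naturality is supported on a $\mathrm{Q}$-summand and is therefore annihilated upon composing with $\mathrm{E}=\mathrm{G}\circ\mathrm{G}$ (because $\mathrm{Q}\circ\mathrm{E}=\mathrm{Q}\circ\mathrm{G}\circ\mathrm{G}=0$), so that the two composites $\xi_{\mathrm{E}}\xi_{\mathrm{E}}^{-1}$ and $\xi_{\mathrm{E}}^{-1}\xi_{\mathrm{E}}$ agree with the relevant identities in the image even though they need not be identities in $\cC$. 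The same vanishing phenomenon, combined with the interchange law, is what I would use to verify the naturality squares and the modification axiom linking the components $\xi_{\mathtt{j}}$ across different $\mathtt{j}$ with the left $\cC$-action. Once $\xi$ is shown to be a well-defined invertible modification, the equivalence $\Lambda'\Lambda\cong\mathrm{id}_{\mathbf{P}_{\mathtt{i},\mathrm{E}}}$ follows, and projectivity is then immediate from Proposition~\ref{prop92}.
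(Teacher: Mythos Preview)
Your overall plan is sound and the conclusion is correct, but you are working considerably harder than the paper does. The paper's argument is a two-line computation: having observed that morphisms in $\mathbf{P}_{\mathtt{i},\mathrm{E}}$ are all of the form $\beta\circ_0\mathrm{id}_{\mathrm{E}}$ and that $\Lambda'\Lambda$ is right multiplication by $\mathrm{E}$, one simply fixes \emph{any} isomorphism $\eta:\mathrm{E}\to\mathrm{E}^2$ and defines the component at $\mathrm{F}\circ\mathrm{E}$ to be $\mathrm{id}_{\mathrm{F}}\circ_0\eta$; the single commutative square
\[
\xymatrix{
\mathrm{F}\circ\mathrm{E}\ar[rr]^{\beta\circ_0\mathrm{id}_{\mathrm{E}}}\ar[d]_{\mathrm{id}_{\mathrm{F}}\circ_0\eta}
&& \mathrm{F}'\circ\mathrm{E}\ar[d]^{\mathrm{id}_{\mathrm{F}'}\circ_0\eta}\\
\mathrm{F}\circ\mathrm{E}\circ\mathrm{E}\ar[rr]^{\beta\circ_0\mathrm{id}_{\mathrm{E}}\circ_0\mathrm{id}_{\mathrm{E}}}
&& \mathrm{F}'\circ\mathrm{E}\circ\mathrm{E}
}
\]
is then naturality, and the proof is over. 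In particular the paper never unpacks $\mathrm{E}$ as $\mathrm{G}\circ\mathrm{G}$, never invokes the section/retraction $u,p$ for the summand $\mathrm{G}\hookrightarrow\mathrm{E}$, and never uses the relations $\mathrm{G}\circ\mathrm{Q}=\mathrm{Q}\circ\mathrm{G}=\mathrm{Q}\circ\mathrm{Q}=0$; all that structure is simply absorbed into the single datum ``$\mathrm{E}$ is weakly idempotent''.

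Two smaller points. First, your appeal to the Yoneda equivalence~\eqref{eq3} is not literally applicable: that equivalence concerns $\mathrm{Hom}_{\ccC}(\mathbb{P}_{\mathtt{i}},{}_-)$, whereas you need to parametrize modifications between endomorphisms of $\mathbf{P}_{\mathtt{i},\mathrm{E}}$; one can make your propagation formula $\xi_{\mathrm{H}\circ\mathrm{E}}=\mathrm{id}_{\mathrm{H}}\circ_0\xi_{\mathrm{E}}$ work directly from cancellativity without citing Yoneda. Second, you are right that one must check the components lie in the image (i.e.\ have the shape $\gamma\circ_0\mathrm{id}_{\mathrm{E}}$), and the paper is terse on this point; but rather than routing through $\mathrm{G}$ and $\mathrm{Q}$ to ``kill error terms'', it is cleaner to note that one may choose $\eta$ of the form $\eta'\circ_0\mathrm{id}_{\mathrm{E}}$ from the outset (equivalently, first build the isomorphism at $\mathrm{E}$ inside the image and then whisker), which makes $\mathrm{id}_{\mathrm{F}}\circ_0\eta=(\mathrm{id}_{\mathrm{F}}\circ_0\eta')\circ_0\mathrm{id}_{\mathrm{E}}$ manifestly a morphism of $\mathbf{P}_{\mathtt{i},\mathrm{E}}$.
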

\begin{proof}
 
Let $\mathrm{F}$ and $\mathrm{F}'$ be two $1$-morphisms in $\cC$. Then 
$\mathrm{F}\circ\mathrm{E}$ and $\mathrm{F}'\circ\mathrm{E}$ are in
$\mathbf{P}_{\mathtt{i},\mathrm{E}}$. 
Any morphism 
$\alpha: \mathrm{F}\circ\mathrm{E}\to \mathrm{F}'\circ\mathrm{E}$
in $\mathbf{P}_{\mathtt{i},\mathrm{E}}$ is, by definition, of the form $\beta\circ_0\mathrm{id}_{\mathrm{E}}$
for some $\beta:\mathrm{F}\to \mathrm{F}'$. The composition  $\Lambda'\Lambda$ is given by right multiplication by $\mathrm{E}$. Denote by 
$\eta: \mathrm{E}\to \mathrm{E}^2$ an isomorphism. Then the diagram
\begin{displaymath}
\xymatrix{ 
\mathrm{F}\circ\mathrm{E}\ar[rrrr]^{\beta\circ_0\mathrm{id}_{\mathrm{E}}}
\ar[d]_{\mathrm{id}_{\mathrm{F}}\circ_0\eta}
&&&& \mathrm{F}'\circ\mathrm{E}\ar[d]^{\mathrm{id}_{\mathrm{F}'}\circ_0\eta}\\
\mathrm{F}\circ\mathrm{E}\circ\mathrm{E}
\ar[rrrr]^{\beta\circ_0\mathrm{id}_{\mathrm{E}}\circ_0\mathrm{id}_{\mathrm{E}} }
&&&& \mathrm{F}'\circ\mathrm{E}\circ\mathrm{E}
}
\end{displaymath}
where the vertical arrows are isomorphisms provides an isomorphism from the identity on $\mathbf{P}_{\mathtt{i},\mathrm{E}}$ to $\Lambda'\Lambda$.
\end{proof}

\begin{corollary}\label{cor437}
\begin{enumerate}[$($a$)$]
\item\label{cor437.1} The restriction of ${}_-\circ\mathrm{E}$ to  $\mathbf{P}_{\mathtt{i},\mathrm{E}}$ 
is isomorphic to the identity functor on $\mathbf{P}_{\mathtt{i},\mathrm{E}}$.
\item\label{cor437.2} The restriction of ${}_-\circ\mathrm{G}$ to  $\mathbf{P}_{\mathtt{i},\mathrm{E}}$ 
is isomorphic to the identity functor on $\mathbf{P}_{\mathtt{i},\mathrm{E}}$.
\item\label{cor437.3} $\mathrm{End}_{\ccC}(\mathbf{P}_{\mathtt{i},\mathrm{E}})$ is biequivalent
to $\mathrm{E}\circ\cC(\mathtt{i},\mathtt{i})\circ\mathrm{E}$.
\end{enumerate}
\end{corollary}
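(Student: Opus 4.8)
The three statements all rest on the retract structure $\Lambda,\Lambda'$ from Proposition~\ref{prop441}, so the plan is to dispose of \eqref{cor437.1} and \eqref{cor437.2} quickly and to spend the effort on \eqref{cor437.3}.

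For \eqref{cor437.1} I would simply unravel the definitions: since $\Lambda$ is the inclusion and $\Lambda'$ is right multiplication by $\mathrm{E}$, both the composite $\Lambda'\Lambda$ and the restriction of ${}_-\circ\mathrm{E}$ send an object $\mathrm{F}\circ\mathrm{E}$ of $\mathbf{P}_{\mathtt{i},\mathrm{E}}$ to $\mathrm{F}\circ\mathrm{E}\circ\mathrm{E}$ (which again lies in $\mathbf{P}_{\mathtt{i},\mathrm{E}}$ because $\mathrm{E}\circ\mathrm{E}\cong\mathrm{E}$); thus ${}_-\circ\mathrm{E}$ restricts to $\Lambda'\Lambda$, and \eqref{cor437.1} is exactly Proposition~\ref{prop441}. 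For \eqref{cor437.2} the one computation needed is $\mathrm{E}\circ\mathrm{G}\cong\mathrm{E}$, which follows from the defining relations: $\mathrm{E}\circ\mathrm{G}=\mathrm{G}\circ\mathrm{G}\circ\mathrm{G}\cong(\mathrm{G}\oplus\mathrm{Q})\circ\mathrm{G}=\mathrm{G}\circ\mathrm{G}\oplus\mathrm{Q}\circ\mathrm{G}\cong\mathrm{E}$, using $\mathrm{G}\circ\mathrm{G}\cong\mathrm{G}\oplus\mathrm{Q}$ and $\mathrm{Q}\circ\mathrm{G}=0$. Fixing an isomorphism $\xi:\mathrm{E}\circ\mathrm{G}\to\mathrm{E}$, the $2$-morphisms $\mathrm{id}_{\mathrm{F}}\circ_0\xi:\mathrm{F}\circ\mathrm{E}\circ\mathrm{G}\to\mathrm{F}\circ\mathrm{E}$ assemble, by the interchange law, into a natural isomorphism exhibiting the restriction of ${}_-\circ\mathrm{G}$ as isomorphic to that of ${}_-\circ\mathrm{E}$, and hence to the identity by \eqref{cor437.1}; note ${}_-\circ\mathrm{G}$ does preserve $\mathbf{P}_{\mathtt{i},\mathrm{E}}$ since $\mathrm{E}\circ\mathrm{G}=\mathrm{G}\circ\mathrm{E}$.

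For \eqref{cor437.3} the plan is to realise $\mathrm{End}_{\ccC}(\mathbf{P}_{\mathtt{i},\mathrm{E}})$ as the ``corner'' of $\mathrm{End}_{\ccC}(\mathbb{P}_{\mathtt{i}})$ determined by the weakly idempotent endomorphism $\Phi_{\mathrm{E}}:={}_-\circ\mathrm{E}=\Lambda\Lambda'$. Recall from \eqref{eq3} (as spelled out at the start of Subsection~\ref{s4.3}) that right multiplication identifies $\mathrm{End}_{\ccC}(\mathbb{P}_{\mathtt{i}})$ with $\cC(\mathtt{i},\mathtt{i})$, sending $\Phi_{\mathrm{E}}$ to $\mathrm{E}$. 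I would then introduce
\begin{displaymath}
\mathsf{A}:\mathrm{End}_{\ccC}(\mathbf{P}_{\mathtt{i},\mathrm{E}})\to\mathrm{End}_{\ccC}(\mathbb{P}_{\mathtt{i}}),\ \Xi\mapsto\Lambda\Xi\Lambda',
\qquad
\mathsf{B}:\mathrm{End}_{\ccC}(\mathbb{P}_{\mathtt{i}})\to\mathrm{End}_{\ccC}(\mathbf{P}_{\mathtt{i},\mathrm{E}}),\ \Omega\mapsto\Lambda'\Omega\Lambda,
\end{displaymath}
and use $\Lambda'\Lambda\cong\mathrm{Id}$ to compute $\mathsf{B}\mathsf{A}\cong\mathrm{Id}$ and $\mathsf{A}\mathsf{B}(\Omega)=\Phi_{\mathrm{E}}\Omega\Phi_{\mathrm{E}}$. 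Since the latter is isomorphic to $\Omega$ precisely when $\Omega$ lies in the corner $\{\Omega:\Omega\cong\Phi_{\mathrm{E}}\Omega\Phi_{\mathrm{E}}\}$, the functor $\mathsf{A}$ and the restriction of $\mathsf{B}$ to this corner are mutually inverse equivalences. As $\Phi_{\mathrm{E}}\Phi_{\mathrm{X}}\Phi_{\mathrm{E}}=\Phi_{\mathrm{E}\circ\mathrm{X}\circ\mathrm{E}}$ under the identification of $\mathrm{End}_{\ccC}(\mathbb{P}_{\mathtt{i}})$ with $\cC(\mathtt{i},\mathtt{i})$, this corner is precisely $\mathrm{E}\circ\cC(\mathtt{i},\mathtt{i})\circ\mathrm{E}$.

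To upgrade this equivalence of categories to the asserted biequivalence of one-object $2$-categories, I would finally check that $\mathsf{A}$ is monoidal: $\mathsf{A}(\Xi)\circ\mathsf{A}(\Xi')=\Lambda\Xi(\Lambda'\Lambda)\Xi'\Lambda'\cong\Lambda(\Xi\circ\Xi')\Lambda'=\mathsf{A}(\Xi\circ\Xi')$ and $\mathsf{A}(\mathrm{Id})=\Phi_{\mathrm{E}}$ is the monoidal unit $\mathrm{E}$ of the corner. I expect the main obstacle to be the bookkeeping forced by \emph{weak} rather than strict idempotency: every corner identity holds only up to the isomorphism $\mathrm{E}\circ\mathrm{E}\cong\mathrm{E}$ (equivalently $\Lambda'\Lambda\cong\mathrm{Id}$), so the care lies in confirming that $\mathsf{A}$ and $\mathsf{B}$ are coherent monoidal functors and that the essential image is exactly the corner. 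One must also track composition order, since right multiplication reverses it; this is the $2$-categorical shadow of the classical fact, used in Section~\ref{s1}, that the endomorphisms of a representable form the opposite algebra.
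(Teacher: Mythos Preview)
Your argument is correct and follows essentially the same route as the paper. For \eqref{cor437.1} and \eqref{cor437.2} the paper is even terser, deducing \eqref{cor437.2} from \eqref{cor437.1} via $\mathrm{E}\circ\mathrm{Q}=0$ rather than your $\mathrm{Q}\circ\mathrm{G}=0$, but this is the same computation read from the other side. For \eqref{cor437.3} the paper likewise sends $\Xi\mapsto\Lambda\Xi\Lambda'$, applies Yoneda to identify the result with a $1$-morphism $\mathrm{F}$, and then verifies $\mathrm{E}\circ\mathrm{F}\cong\mathrm{F}\cong\mathrm{F}\circ\mathrm{E}$ directly (the first using that $\Xi$ intertwines the left $\cC$-action, the second because $\mathrm{F}\in\mathbf{P}_{\mathtt{i},\mathrm{E}}$); your ``corner of a weak idempotent'' packaging with $\mathsf{A},\mathsf{B}$ is a cleaner abstract formulation of the same idea, and your explicit check of monoidality makes precise a point the paper leaves implicit in the word ``biequivalent''.
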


\begin{proof}
Claim \eqref{cor437.1} is a direct consequence of  Proposition~\ref{prop441}.
Claim \eqref{cor437.2} follows from claim \eqref{cor437.1} since $\mathrm{E}\circ\mathrm{Q}=0$ 
(see Lemma~\ref{lem404}).

To prove claim \eqref{cor437.3} we consider the composition
\begin{displaymath}
\mathbb{P}_{\mathtt{i}}\overset{\Lambda'}{\longrightarrow} 
\mathbf{P}_{\mathtt{i},\mathrm{E}}\overset{\Phi}{\longrightarrow} \mathbf{P}_{\mathtt{i},\mathrm{E}}
\overset{\Lambda}{\longrightarrow}  \mathbb{P}_{\mathtt{i}} 
\end{displaymath}
where $\Phi\in \mathrm{End}_{\ccC}(\mathbf{P}_{\mathtt{i},\mathrm{E}})$.
Then $\Lambda\Phi\Lambda'$ is an endomorphism of $\mathbb{P}_{\mathtt{i}}$ and hence is given (up to equivalence)
by right multiplication with some $1$-morphism $\mathrm{F}\in \cC(\mathtt{i},\mathtt{i})$ by the Yoneda Lemma. 
We have $\mathrm{F}\circ\mathrm{E}\cong \mathrm{F}$ since $\mathrm{F}\in \mathbf{P}_{\mathtt{i},\mathrm{E}}$
by claim \eqref{cor437.1}. Further, using idempotency of $\mathrm{E}$ we have
\begin{displaymath}
\mathrm{F}\cong\Phi(\mathrm{E})\cong\Phi(\mathrm{E}\circ\mathrm{E}) 
 \cong\mathrm{E}\circ\Phi(\mathrm{E})\cong\mathrm{E}\circ\mathrm{F}.
\end{displaymath}
This yields that $\mathrm{F}$ is isomorphic to a $1$-morphism in
$\mathrm{E}\circ\cC(\mathtt{i},\mathtt{i})\circ\mathrm{E}$. If 
$\Phi'\in \mathrm{End}_{\ccC}(\mathbf{P}_{\mathtt{i},\mathrm{E}})$ is similarly given by some $\mathrm{F}'$
and $\eta: \Phi\to\Phi'$ is a modification, then, again by the Yoneda Lemma, the corresponding modification
\begin{displaymath}
\mathrm{id}_{\Lambda}\circ_0\eta\circ_0\mathrm{id}_{\Lambda'}:\Lambda\Phi\Lambda'\to\Lambda\Phi'\Lambda' 
\end{displaymath}
is given by some $2$-morphism $\alpha:\mathrm{F}\to\mathrm{F}'$. It follows that $\eta$ is given by 
restriction of $\alpha$ to $\mathbf{P}_{\mathtt{i},\mathrm{E}}$. Claim \eqref{cor437.3} follows.
\end{proof}

\subsection{Description of projective finitary $2$-representations}\label{s4.5}

Now we are ready to describe projective $2$-representations of $\cC$. We assume that $\cC$ is cancellative.

\begin{theorem}\label{thm412}
Let $\mathbf{P}$ be a projective  $2$-representation of $\cC$. Then
\begin{equation}\label{eq41201}
\mathbf{P}\cong \bigoplus_{s=1}^m \mathbf{P}_{\mathtt{i}_s,\mathrm{E}_s} 
\end{equation}
for some $\mathtt{i}_s\in\cC$ and $\mathrm{E}_s\in\cC(\mathtt{i}_s,\mathtt{i}_s)$ such that
$\mathrm{E}_s=\mathrm{G}_s\circ \mathrm{G}_s$ for some indecomposable
$\mathrm{G}_s\in\cC(\mathtt{i}_s,\mathtt{i}_s)$  and,
additionally,
\begin{displaymath}
\mathrm{G}_s\circ\mathrm{G}_s\cong \mathrm{G}_s\oplus \mathrm{Q}_s\quad\text{ and }\quad 
\mathrm{G}_s\circ\mathrm{Q}_s=\mathrm{Q}_s\circ\mathrm{G}=\mathrm{Q}_s\circ\mathrm{Q}_s=0
\end{displaymath} 
for some $\mathrm{Q}_s\in\cC(\mathtt{i}_s,\mathtt{i}_s)$.
\end{theorem}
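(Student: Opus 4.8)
The plan is to realize $\mathbf{P}$ as the retract of a direct sum of principal $2$-representations and then read off the decomposition from the analysis of idempotent endomorphisms in Subsections~\ref{s4.3} and~\ref{s4.4}. First, by Proposition~\ref{prop92}, I may fix a finite direct sum $\mathbb{P}=\bigoplus_{t}\mathbb{P}_{\mathtt{j}_t}$ of principal $2$-representations together with strong transformations $\beta:\mathbf{P}\to\mathbb{P}$ and $\alpha:\mathbb{P}\to\mathbf{P}$ satisfying $\alpha\beta\cong\mathrm{id}_{\mathbf{P}}$. Set $\Phi:=\beta\alpha\in\mathrm{End}_{\ccC}(\mathbb{P})$, so that $\Phi^2\cong\Phi$ and $\mathbf{P}$ is the retract cut out by $\Phi$. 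If $\Phi\cong 0$ then $\mathbf{P}\cong 0$ and \eqref{eq41201} holds with the empty sum; so assume $\Phi\cong\Phi^2\neq 0$. Then $M_\Phi$ is a nonzero idempotent non-negative integer matrix, and the construction of Subsection~\ref{s4.3} (Lemmas~\ref{lem403}--\ref{lem405}, Corollary~\ref{lem406}, Lemma~\ref{lem409}) gives $\Phi\cong\Gamma\oplus\Theta\oplus\Pi$, where $\Gamma$ is the multiplicity-free direct sum of the indecomposable summands of $\Phi$, each being right multiplication with a distinct indecomposable $\mathrm{G}_s\in\cC(\mathtt{i}_s,\mathtt{i}_s)$ $(s=1,\dots,m)$, and $\Theta$ is assembled from the associated $\mathrm{Q}_s$ of Lemma~\ref{lem404}.

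The heart of the argument is to show that the retract cut out by $\Phi$ is isomorphic, in $\cC\text{-}\mathrm{afmod}$, to the one cut out by the weak idempotent $\Xi:=\Gamma\oplus\Theta$. Write $\Psi:=\Gamma\oplus\Theta\oplus\Pi\Gamma$; by Corollary~\ref{cor410} both $\Xi$ and $\Psi$ are idempotent, and \eqref{lem410.3}--\eqref{lem410.7} record the identities $\Phi\Psi\cong\Psi$, $\Psi\Phi\cong\Phi$, $\Psi\Xi\cong\Psi$ and $\Xi\Psi\cong\Xi$. The retract $\mathbf{R}$ of $\Xi$ exists by Proposition~\ref{prop441}; let $j:\mathbf{R}\to\mathbb{P}$, $q:\mathbb{P}\to\mathbf{R}$ be the corresponding maps, so $qj\cong\mathrm{id}_{\mathbf{R}}$ and $jq\cong\Xi$. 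I would then set $a:=q\,\Psi\,\beta:\mathbf{P}\to\mathbf{R}$ and $b:=\alpha\,\Psi\,j:\mathbf{R}\to\mathbf{P}$, and verify $ab\cong\mathrm{id}_{\mathbf{R}}$ and $ba\cong\mathrm{id}_{\mathbf{P}}$ using only these identities. Concretely, $ab\cong q\,\Psi\Phi\Psi\,j\cong q\,\Psi\,j\cong q\,j\cong\mathrm{id}_{\mathbf{R}}$, where I collapse $\Psi\Phi\Psi\cong\Psi$ by \eqref{lem410.7} and \eqref{lem410.5} and use $q\Psi\cong q$ (from \eqref{lem410.4} and $qj\cong\mathrm{id}_{\mathbf{R}}$); and $ba\cong\alpha\,\Psi\Xi\Psi\,\beta\cong\alpha\,\Psi\,\beta\cong\alpha\beta\cong\mathrm{id}_{\mathbf{P}}$, collapsing $\Psi\Xi\Psi\cong\Psi$ by \eqref{lem410.3} and $\Psi\beta\cong\beta$ (from $\beta\cong\Phi\beta$ and \eqref{lem410.7}). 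The design point is that only the \emph{endomorphism} $\Psi$ of $\mathbb{P}$ enters, so its own (potentially non-splitting) retract is never needed.

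It remains to identify $\mathbf{R}$, the retract of $\Xi=\Gamma\oplus\Theta$, with $\bigoplus_{s=1}^m\mathbf{P}_{\mathtt{i}_s,\mathrm{E}_s}$ for $\mathrm{E}_s:=\mathrm{G}_s\circ\mathrm{G}_s$. Since $\Xi$ is right multiplication by $\bigoplus_s\mathrm{E}_s=\bigoplus_s(\mathrm{G}_s\oplus\mathrm{Q}_s)$, and Lemma~\ref{lem405}\eqref{lem405.2} gives $\mathrm{G}_s\circ\mathrm{G}_{s'}=0$ for $s\neq s'$, the corresponding summands are pairwise orthogonal weak idempotents, whence the retract splits as a direct sum over $s$; by the definition in Subsection~\ref{s4.4} together with Proposition~\ref{prop441}, the $s$-th summand is exactly $\mathbf{P}_{\mathtt{i}_s,\mathrm{E}_s}$. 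Finally I would check that each $(\mathrm{G}_s,\mathrm{Q}_s)$ meets the hypotheses of Subsection~\ref{s4.4}: $\mathrm{G}_s$ is indecomposable and $\mathrm{G}_s\circ\mathrm{G}_s\cong\mathrm{G}_s\oplus\mathrm{Q}_s$ with $\mathrm{G}_s\circ\mathrm{Q}_s=\mathrm{Q}_s\circ\mathrm{G}_s=0$ by Lemma~\ref{lem403}\eqref{lem403.1} and Lemma~\ref{lem404}, while $\mathrm{Q}_s\circ\mathrm{Q}_s=0$ because it is a summand of $\mathrm{Q}_s\circ\mathrm{G}_s\circ\mathrm{G}_s=0$. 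This yields \eqref{eq41201}.

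I expect the main obstacle to be the middle step: replacing the genuine idempotent $\Phi$, whose part $\Pi$ blocks any direct comparison, by the clean weak idempotent $\Gamma\oplus\Theta$, since, in contrast to the classical additive setting, there is no guarantee that the intermediate idempotent $\Psi$ splits off. This is precisely what the battery of identities in Corollary~\ref{cor410} is engineered to bypass, and the delicate point is to choose the comparison maps $a$ and $b$ so that every composite reduces through those identities alone, never invoking a retract of $\Psi$ itself.
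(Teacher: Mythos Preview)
Your proposal is correct and follows essentially the same approach as the paper. Both arguments start from the retract $\Phi=\beta\alpha$, decompose it as $\Gamma\oplus\Theta\oplus\Pi$, and then use the intermediate idempotent $\Gamma\oplus\Theta\oplus\Pi\Gamma$ together with the identities of Corollary~\ref{cor410} to replace $\Phi$ by the diagonal idempotent $\Gamma\oplus\Theta$; the only cosmetic difference is that the paper defines the comparison maps $\Omega=(\Gamma\oplus\Theta)(\Gamma\oplus\Theta\oplus\Pi\Gamma)\beta$ and $\Omega'=\alpha(\Gamma\oplus\Theta\oplus\Pi\Gamma)(\Gamma\oplus\Theta)$ directly between $\mathbf{P}$ and $\mathbb{P}$ and then identifies the image of $\Omega\Omega'\cong\Gamma\oplus\Theta$, whereas you first build the retract $\mathbf{R}$ of $\Gamma\oplus\Theta$ via $j,q$ and then set $a=q\Psi\beta$, $b=\alpha\Psi j$.
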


\begin{proof}
Let $\mathbf{P}$ be a projective  $2$-representation of $\cC$. By Proposition~\ref{prop92},
there exists a direct sum $\mathbb{P}$ of principal $2$-representations and strong transformations
$\Phi:\mathbb{P}\to \mathbf{P}$ and $\Psi:\mathbf{P}\to\mathbb{P}$ 
such that $\Phi\Psi$ is isomorphic to the identity endomorphism of $\mathbf{P}$. 
Therefore $\Psi\Phi$ is isomorphic to an idempotent endomorphism of $\mathbb{P}$.

By our analysis in Subsection \ref{s4.3},  $\Psi\Phi$ is of the form $\Psi\Phi\cong \Gamma \oplus \Theta \oplus\Pi$ with 
$\Gamma^2 \cong\Gamma \oplus \Theta$ and $\Theta\Psi\Phi=\Psi\Phi\Theta=0$. By Corollary \ref{cor410}\eqref{lem410.1} 
and \eqref{lem410.3},  $\Gamma \oplus \Theta$ and $(\Gamma\oplus\Theta\oplus\Pi\Gamma)$ are also idempotent endomorphisms 
of $\mathbb{P}$. Moreover, by Corollary \ref{cor410}\eqref{lem410.5} and  \eqref{lem410.7}, we have 
\begin{equation}\label{eq425}
\begin{split}
\Psi\Phi &\cong \Psi\Phi(\Gamma\oplus\Theta\oplus\Pi\Gamma)\Psi\Phi \\
\Gamma\oplus\Theta\oplus\Pi\Gamma&\cong (\Gamma\oplus\Theta\oplus\Pi\Gamma)\Psi\Phi (\Gamma\oplus\Theta\oplus\Pi\Gamma)
\end{split}
\end{equation}
and similarly Corollary \ref{cor410}\eqref{lem410.3} and \eqref{lem410.4} yield
\begin{equation}\label{eq426}
\begin{split}
\Gamma\oplus\Theta\oplus\Pi\Gamma &\cong 
(\Gamma\oplus\Theta\oplus\Pi\Gamma)(\Gamma \oplus \Theta)(\Gamma\oplus\Theta\oplus\Pi\Gamma)  \\
\Gamma \oplus \Theta &\cong (\Gamma \oplus \Theta)(\Gamma\oplus\Theta\oplus\Pi\Gamma)(\Gamma \oplus \Theta).
\end{split}
\end{equation}

Consider the strong transformations
\begin{displaymath}
\begin{array}{lcrlll}
\Omega&:=&(\Gamma\oplus\Theta)(\Gamma\oplus\Theta\oplus\Pi\Gamma)\Psi:&\mathbf{P}&\to&\mathbb{P}\\
\Omega'&:=&\Phi(\Gamma\oplus\Theta\oplus\Pi\Gamma)(\Gamma\oplus\Theta):&\mathbb{P}&\to&\mathbf{P}.\\
\end{array}
\end{displaymath}
We have $\Omega'\Omega\cong \Phi\Psi \Omega'\Omega \Phi\Psi$ and the latter is isomorphic to the
identity on $\mathbf{P}$ by \eqref{eq425} and \eqref{eq426}. In the other direction, we deduce 
from \eqref{eq425} and \eqref{eq426} that the composition $\Omega\Omega'$ is isomorphic to 
$\Gamma \oplus \Theta$. The latter is a diagonal idempotent endomorphism of $\mathbb{P}$. Since $\cC$ is cancellative,
we have a well-defined image of $\Gamma \oplus \Theta$ which has the form specified in \eqref{eq41201} by
Proposition~\ref{prop441}.
Moreover, $\Gamma \oplus \Theta$ is isomorphic to the identity endomorphism of this image. This completes the proof.
\end{proof}

We would like to emphasize here the main difference between classical representation theory and $2$-representation theory. In classical representation theory, indecomposable projectives correspond to indecomposable idempotents. In $2$-representation theory, the idempotent associated to an indecomposable projective $2$-representation might not itself be indecomposable, but is the square of an indecomposable $1$-morphism (which itself might not be idempotent), see Example \ref{s7.2} for an example of this.

\section{$2$-Morita theory}\label{s3}

\subsection{$2$-progenerators}\label{s3.1}

Let $\cC$ be a finitary $2$-category. We denote by $\cC\text{-}\mathrm{proj}$ the full $2$-subcategory of $\cC\text{-}\mathrm{afmod}$ whose objects are projective $2$-representations.
A full $2$-subcategory $\cP$ of $\cC\text{-}\mathrm{proj}$ is called a {\em $2$-progenerator} provided that for any projective 
$2$-representation $\mathbf{P}$ of $\cC$ there is $\mathbf{P}'$ in the additive closure of $\cP$ and
strong transformations $\Psi:\mathbf{P}\to\mathbf{P}'$ and $\Phi:\mathbf{P}'\to\mathbf{P}$ such that
$\Phi\Psi$ is isomorphic to the identity on $\mathbf{P}$. For example, the $2$-subcategory 
$\cP_{\ccC,\mathbb{P}}$ of $\cC\text{-}\mathrm{afmod}$
whose objects are the principal $2$-representations of $\cC$ is a $2$-progenerator. 
Note that $\cP_{\ccC,\mathbb{P}}$ is biequivalent to $\cC^{\mathrm{op}}$ and $\mathrm{End}_{\ccC}(\cP_{\ccC,\mathbb{P}})$ is biequivalent to $\cC$.
Note also that  a full $2$-subcategory $\cP$ of $\cC\text{-}\mathrm{proj}$ is a progenerator if any 
principle $2$-representation is a retract of an object in the additive closure of $\cP$.

\subsection{The essential $2$-subcategory of a finitary $2$-category}\label{s2.8}

Let $\cC$ be a finitary $2$-category. Define a binary relation $\preceq$ on the set of equivalence classes
of objects of $\cC$ as follows: $\mathtt{i}\preceq\mathtt{j}$ if and only if there exist 
$\Phi\in\cC(\mathtt{j},\mathtt{i})$ and $\Psi\in\cC(\mathtt{i},\mathtt{j})$ such that 
$\Phi\Psi\cong\mathbbm{1}_{\mathtt{i}}$. Denote by ${}^{\dagger}{\cC}$ the full $2$-subcategory of $\cC$
given by a choice of one object $\mathtt{i}$ in each equivalence class which is a maximal 
element with respect to $\preceq$.
We will call ${}^{\dagger}{\cC}$ the {\em essential} $2$-subcategory of $\cC$.

\subsection{The main result}\label{s3.2}
The following is the main result of this paper. Recall that $\cA^{\mathrm{op}}$ is defined in Subsection~\ref{s2.1}.

\begin{theorem}[Morita theorem for finitary $2$-categories]\label{thmmain}
Let $\cA$ and $\cC$ be two finitary $2$-categories. Then the following assertions are equivalent. 
\begin{enumerate}[$($a$)$]
\item\label{thmmain.1} There is a $2$-progenerator $\cP$ for $\cC$ whose endomorphism $2$-category is
biequivalent to $\cA^{\mathrm{op}}$.
\item\label{thmmain.2} The $2$-categories $\cA\text{-}\mathrm{proj}$ and $\cC\text{-}\mathrm{proj}$ are biequivalent.
\item\label{thmmain.3} The $2$-categories $\cA\text{-}\mathrm{afmod}$ and $\cC\text{-}\mathrm{afmod}$ are biequivalent.
\end{enumerate}
\end{theorem}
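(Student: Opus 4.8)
The plan is to prove the cyclic chain of implications \eqref{thmmain.3} $\Rightarrow$ \eqref{thmmain.2} $\Rightarrow$ \eqref{thmmain.1} $\Rightarrow$ \eqref{thmmain.3}, exactly mirroring the structure of the classical Theorem~\ref{mthm}, but upgrading every categorical statement to its bicategorical analogue. The key input making this possible is the description of projective $2$-representations already established: by Proposition~\ref{prop92} the objects of $\cC\text{-}\mathrm{proj}$ are precisely the retracts of direct sums of principal $2$-representations, and by Theorem~\ref{thm412} these are explicitly the $\mathbf{P}_{\mathtt{i},\mathrm{E}}$. Throughout I would freely invoke the reduction to the cancellative case justified at the end of Subsection~\ref{s2.7}.

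First I would prove \eqref{thmmain.3} $\Rightarrow$ \eqref{thmmain.2}. A biequivalence $\cA\text{-}\mathrm{afmod}\to\cC\text{-}\mathrm{afmod}$ is a local equivalence, and since projectivity is defined in Subsection~\ref{s4.002} purely in terms of $\mathrm{Hom}_{\ccC}({}_-,{}_-)$ commuting with small colimits — an intrinsic categorical property — a biequivalence must send projective objects to projective objects and reflect them. Hence it restricts to a biequivalence of the full $2$-subcategories $\cA\text{-}\mathrm{proj}$ and $\cC\text{-}\mathrm{proj}$. The mild subtlety, which I would address, is that biequivalences only preserve colimits up to the coherent isomorphisms of \eqref{eq1}–\eqref{eq2}, so one checks that the colimit-preservation condition is invariant under the equivalences $\mathbf{H}_{\mathtt{i},\mathtt{j}}$ rather than being literally preserved.

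Next, \eqref{thmmain.2} $\Rightarrow$ \eqref{thmmain.1}. Given a biequivalence $\mathbf{H}:\cA\text{-}\mathrm{proj}\to\cC\text{-}\mathrm{proj}$, I would take $\cP$ to be the image under $\mathbf{H}$ (in the sense of Subsection~\ref{s1.6}, after passing to skeleta to guarantee injectivity on objects) of the $2$-progenerator $\cP_{\ccA,\mathbb{P}}$ of principal $2$-representations of $\cA$. Since $\cP_{\ccA,\mathbb{P}}$ is a $2$-progenerator for $\cA$ and $\mathbf{H}$ is a local equivalence that is essentially surjective, $\cP$ is a $2$-progenerator for $\cC$: every principal $\cC$-representation, being projective, is a retract of an object in the additive closure of $\mathbf{H}(\cP_{\ccA,\mathbb{P}})$, by the last remark of Subsection~\ref{s3.1}. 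Because $\mathbf{H}$ is a local equivalence, it induces a biequivalence $\mathrm{End}_{\ccA}(\cP_{\ccA,\mathbb{P}})\to\mathrm{End}_{\ccC}(\cP)$, and by the remark in Subsection~\ref{s3.1} the source is biequivalent to $\cA$; taking opposites and using that $\mathrm{End}$ of the principal $2$-progenerator recovers the $2$-category up to $\mathrm{op}$, this gives $\mathrm{End}_{\ccC}(\cP)$ biequivalent to $\cA^{\mathrm{op}}$, as required.

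Finally, the implication \eqref{thmmain.1} $\Rightarrow$ \eqref{thmmain.3} is the substantive one and I expect it to be the main obstacle. Here one is handed a $2$-progenerator $\cP$ with $\mathrm{End}_{\ccC}(\cP)$ biequivalent to $\cA^{\mathrm{op}}$ and must manufacture an honest biequivalence of the full $2$-representation categories. The plan is to construct a pair of adjoint-type $2$-functors
\begin{displaymath}
\cP\otimes_{\ccA}{}_-:\ \cA\text{-}\mathrm{afmod}\to\cC\text{-}\mathrm{afmod}
\quad\text{ and }\quad
\mathrm{Hom}_{\ccC}(\cP,{}_-):\ \cC\text{-}\mathrm{afmod}\to\cA\text{-}\mathrm{afmod},
\end{displaymath}
the $2$-analogues of the classical Morita functors appearing in the proof of Theorem~\ref{mthm}. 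The difficulty is threefold: one must (i) define the tensor-product $2$-functor coherently, building the tensor over the $2$-category $\cA^{\mathrm{op}}\simeq\mathrm{End}_{\ccC}(\cP)$ rather than over an algebra, so that all associativity and unit constraints are strong transformations satisfying the pentagon/triangle conditions of Subsection~\ref{s2.3}; (ii) produce the unit and counit strong transformations of \eqref{eq1} together with the invertible modifications of \eqref{eq2} witnessing biequivalence; and (iii) verify the two triangle-type coherences. The engine driving all three is the Yoneda equivalence \eqref{eq3} together with the colimit presentation of Proposition~\ref{prop91}: every $2$-representation is canonically the colimit of its principal covers, $\cP\otimes_{\ccA}{}_-$ is defined so as to preserve these colimits, and the defining property of a projective in Subsection~\ref{s4.002} guarantees that $\mathrm{Hom}_{\ccC}(\cP,{}_-)$ does too. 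I would therefore check the unit/counit are equivalences first on the generating principal $2$-representations, where the computation reduces via \eqref{eq3} and the $2$-progenerator hypothesis to the statement that $\cP$ together with $\mathrm{End}_{\ccC}(\cP)\simeq\cA^{\mathrm{op}}$ reconstructs the principal $\cA$-representations, and then extend to all of $\cA\text{-}\mathrm{afmod}$ by colimit-preservation. The genuinely delicate point, and where I expect to spend the most effort, is bookkeeping the coherence modifications so that the resulting data indeed satisfy the biequivalence conditions \eqref{eq1}–\eqref{eq2}; this is the categorified replacement for the one-line assertion "are easily seen to be mutually inverse equivalences" in the classical proof.
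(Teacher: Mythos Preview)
Your implications \eqref{thmmain.3}$\Rightarrow$\eqref{thmmain.2}$\Rightarrow$\eqref{thmmain.1} match the paper essentially verbatim. The divergence is in closing the cycle. You propose to go directly \eqref{thmmain.1}$\Rightarrow$\eqref{thmmain.3} by constructing a tensor $2$-functor $\cP\otimes_{\ccA}{}_-$ together with $\mathrm{Hom}_{\ccC}(\cP,{}_-)$ and verifying that the unit/counit are equivalences, first on principals and then by colimit extension via Proposition~\ref{prop91}. The paper does not do this. It instead proves \eqref{thmmain.1}$\Rightarrow$\eqref{thmmain.2} and \eqref{thmmain.2}$\Rightarrow$\eqref{thmmain.3} as separate steps. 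For \eqref{thmmain.1}$\Rightarrow$\eqref{thmmain.2} only the Hom side is used, restricted to projectives: one checks directly that $\mathrm{Hom}_{\ccC}(\cP,{}_-):\cC\text{-}\mathrm{proj}\to\underline{\cA}^{\mathrm{op}}\text{-}\mathrm{proj}$ is locally faithful (using that every principal is a retract of something in the additive closure of $\cP$), locally full and dense (bootstrapping from the Yoneda case $\cP_{\ccC,\mathbb{P}}$ through retracts), and essentially surjective (using Theorem~\ref{thm412} and Corollary~\ref{cor437}\eqref{cor437.3} to realise every idempotent on the $\cA$-side by an idempotent $1$-morphism on the $\cC$-side). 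For \eqref{thmmain.2}$\Rightarrow$\eqref{thmmain.3} the paper introduces the essential $2$-subcategory ${}^{\dagger}\cC$ of $\preceq$-maximal objects (Subsection~\ref{s2.8}), observes that a biequivalence of $\mathrm{proj}$-categories must match essential subcategories (Lemma~\ref{lem531}), and then proves independently (Proposition~\ref{prop555}) that $\cC\text{-}\mathrm{afmod}\simeq{}^{\dagger}\cC\text{-}\mathrm{afmod}$ via restriction, with finite-dimensionality of $2$-morphism spaces used to promote ``locally faithful in both directions'' to ``locally full''.

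What each approach buys: your route is the conceptually classical one and would yield explicit mutually inverse biequivalences on the full $\mathrm{afmod}$ categories, but it requires you to build a bicategorical tensor product over a finitary $2$-category and to do exactly the coherence bookkeeping you flag as the main obstacle---none of this machinery is developed in the paper, and your colimit-extension argument would still need a check that $\cP\otimes_{\ccA}{}_-$ lands in $\mathrm{afmod}$ rather than merely $\mathrm{amod}$. The paper's route sidesteps the tensor product altogether: the essential-subcategory trick reduces the passage from $\mathrm{proj}$ to $\mathrm{afmod}$ to an internal statement about a single $2$-category, and finitariness is what makes that reduction work.
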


\subsection{The implication \eqref{thmmain.3}$\Rightarrow$\eqref{thmmain.2}$\Rightarrow$\eqref{thmmain.1}}\label{s3.3}

Assume that $\cA\text{-}\mathrm{afmod}$ and $\cC\text{-}\mathrm{afmod}$ are biequivalent. Since
the notion of a projective $2$-representation is categorical, it follows that 
$\cA\text{-}\mathrm{proj}$ and $\cC\text{-}\mathrm{proj}$ are biequivalent. 
The image of $\cP_{\ccA,\mathbb{P}}$ under such biequivalence is a $2$-progenerator for $\cC$.

\subsection{The implication \eqref{thmmain.1}$\Rightarrow$\eqref{thmmain.2}}\label{s3.4}

Denote by $\underline{\cA}$ the endomorphism $2$-category of $\cP$. Then $\underline{\cA}^{\mathrm{op}}$ and
$\cA$ are biequivalent and hence $\cA\text{-}\mathrm{afmod}$ and 
$\underline{\cA}^{\mathrm{op}}\text{-}\mathrm{afmod}$ are biequivalent by Proposition~\ref{prop123}.
In particular, $\cA\text{-}\mathrm{proj}$ and 
$\underline{\cA}^{\mathrm{op}}\text{-}\mathrm{proj}$ are biequivalent by Subsection~\ref{s3.3}. It remains to show
that $\cC\text{-}\mathrm{proj}$ and $\underline{\cA}^{\mathrm{op}}\text{-}\mathrm{proj}$ are biequivalent.
We have the obvious $2$-functor 
\begin{displaymath}
\mathrm{Hom}_{\ccC}(\cP,{}_-):
\cC\text{-}\mathrm{proj}\to\underline{\cA}^{\mathrm{op}}\text{-}\mathrm{MOD}. 
\end{displaymath}
This $2$-functor sends objects of $\cP$ to principal, and hence projective,  $2$-rep\-re\-sen\-ta\-ti\-ons of 
$\underline{\cA}^{\mathrm{op}}$. Let $\mathbf{P}$ be a projective $2$-representation of $\cC$. By our definition of
a $2$-progenerator, there is $\mathbf{P}'$ in the additive closure of $\cP$ and
strong transformations $\Psi:\mathbf{P}\to\mathbf{P}'$ and $\Phi:\mathbf{P}'\to\mathbf{P}$ such that
$\Phi\Psi$ is isomorphic to the identity on $\mathbf{P}$. Applying $\mathrm{Hom}_{\ccC}(\cP,{}_-)$ we get
that $\mathrm{Hom}_{\ccC}(\cP,\mathbf{P}')$ is a direct sum of principal $2$-representations of $\underline{\cA}^{\mathrm{op}}$, and $\mathrm{Hom}_{\ccC}(\cP,\mathbf{P})$ comes together with
two maps $\mathrm{Hom}_{\ccC}(\cP,\Phi)$ and $\mathrm{Hom}_{\ccC}(\cP,\Psi)$ such that 
$\mathrm{Hom}_{\ccC}(\cP,\Phi\Psi)$ is isomorphic to the identity on $\mathrm{Hom}_{\ccC}(\cP,\mathbf{P})$.
Since $\mathrm{Hom}_{\ccC}(\cP,\mathbf{P}')$ is a projective $2$-representation of 
$\underline{\cA}^{\mathrm{op}}$, so is $\mathrm{Hom}_{\ccC}(\cP,\mathbf{P})$. Therefore
\begin{equation}\label{eq505}
\mathrm{Hom}_{\ccC}(\cP,{}_-):
\cC\text{-}\mathrm{proj}\to\underline{\cA}^{\mathrm{op}}\text{-}\mathrm{proj}.
\end{equation}

Let $\mathtt{I}:=\{\mathtt{i}_1,\dots,\mathtt{i}_k\}$ be a cross-section of equivalence classes of objects in $\cC$.
Let $\mathbb{P}:=\mathbb{P}_{\mathtt{i}_1}\oplus\cdots\oplus \mathbb{P}_{\mathtt{i}_k}$ and
$\mathbf{P}$ a $2$-representation in the additive closure of $\cP$ such that there exist strong transformations 
$\Psi:\mathbb{P}\to\mathbf{P}$ and $\Phi:\mathbf{P}\to\mathbb{P}$ with the property that
$\Phi\Psi$ is isomorphic to the identity on $\mathbb{P}$. Let $\mathbf{M}$ and $\mathbf{N}$ be two 
$2$-representations of $\cC$, let further $\Lambda,\Lambda':\mathbf{M}\to \mathbf{N}$ be strong transformations
and $\eta:\Lambda\to\Lambda'$ a modification. Then the Yoneda Lemma says that $\mathrm{Hom}_{\ccC}(\mathbb{P},{}_-)$ 
maps
\begin{displaymath}
\mathbf{M}\mapsto \bigoplus_{\mathtt{i}\in\mathtt{I}}  \mathbf{M}(\mathtt{i}),\quad
\mathbf{N}\mapsto \bigoplus_{\mathtt{i}\in\mathtt{I}}  \mathbf{N}(\mathtt{i}),\quad
\Lambda\mapsto \bigoplus_{\mathtt{i}\in\mathtt{I}} \Lambda_{\mathtt{i}},\quad
\Lambda'\mapsto \bigoplus_{\mathtt{i}\in\mathtt{I}} \Lambda'_{\mathtt{i}},\quad
\eta\mapsto \bigoplus_{\mathtt{i}\in\mathtt{I}} \eta_{\mathtt{i}}.
\end{displaymath}
Since $\Phi\Psi$ is isomorphic to the identity on $\mathbb{P}$, it follows that composition with
$\Phi$ maps $\Lambda$, $\Lambda'$ or $\eta$ to zero if and only if $\Lambda=0$, $\Lambda'=0$ or $\eta=0$,
respectively. This implies that the $2$-functor $\mathrm{Hom}_{\ccC}(\cP,{}_-)$ from \eqref{eq505} is locally
faithful.

Let $\mathbf{Q}$ denote the multiplicity-free direct sum of all objects in $\cP$ up to equivalence
(in particular, this sum is finite). By construction, $\mathbb{P}$ is a $2$-progenerator for $\cC$. Hence
there is a direct sum $\mathbf{R}$ of principal $2$-representations of $\cC$ and strong transformations
$\Psi':\mathbf{Q}\to\mathbf{R}$ and $\Phi':\mathbf{R}\to\mathbf{Q}$ such that $\Phi'\Psi'$ is isomorphic to the 
identity on $\mathbf{Q}$. Since $\cC^{\mathrm{op}}$ is biequivalent to $\cP_{\ccC,\mathbb{P}}$,
the $2$-functor 
\begin{displaymath}
\mathrm{Hom}_{\ccC}(\cP_{\ccC,\mathbb{P}},{}_-): \cC\text{-}\mathrm{proj}\to
\mathrm{End}(\cP_{\ccC,\mathbb{P}})^{\mathrm{op}}\text{-}\mathrm{afmod}
\end{displaymath}
is locally full and dense by Proposition~\ref{prop123}.
As the additive closures of $\cP_{\ccC,\mathbb{P}}$ and $\mathbb{P}$ coincide, it follows that
the $2$-functor 
\begin{displaymath}
\mathrm{Hom}_{\ccC}(\mathbb{P},{}_-): \cC\text{-}\mathrm{proj}\to
\mathrm{End}(\mathbb{P})^{\mathrm{op}}\text{-}\mathrm{afmod}
\end{displaymath}
is locally full and dense. Since $\mathbf{R}$ belongs to the
additive closure of $\mathbb{P}$, we get  that the
$2$-functor 
\begin{displaymath}
\mathrm{Hom}_{\ccC}(\mathbf{R},{}_-): \cC\text{-}\mathrm{proj}\to
\mathrm{End}(\mathbf{R})^{\mathrm{op}}\text{-}\mathrm{afmod}
\end{displaymath}
is locally full and dense. Finally, since $\Phi'\Psi'$ is isomorphic to 
the identity on $\mathbf{Q}$, composition with $\Psi'$ gives that the
$2$-functor 
\begin{displaymath}
\mathrm{Hom}_{\ccC}(\mathbf{Q},{}_-): \cC\text{-}\mathrm{proj}\to
\mathrm{End}(\mathbf{Q})^{\mathrm{op}}\text{-}\mathrm{afmod}
\end{displaymath}
is locally full and dense. The latter implies that the $2$-functor $\mathrm{Hom}_{\ccC}(\cP,{}_-)$ from \eqref{eq505} 
is locally full and dense.

It remains to show that the $2$-functor $\mathrm{Hom}_{\ccC}(\cP,{}_-)$ from \eqref{eq505} is surjective 
on equivalence classes of objects.
Let now $\mathbf{P}$ be a projective $2$-representation of $\underline{\cA}^{\mathrm{op}}$. Without 
loss of generality we may assume that $\underline{\cA}^{\mathrm{op}}$ is cancellative. By 
Theorem~\ref{thm412}, $\mathbf{P}$ is equivalent to a direct sum of $2$-rep\-re\-sen\-tations of the 
form $\mathbf{P}^{\underline{\ccA}^{\mathrm{op}}}_{\mathtt{i},\mathrm{E}}$ 
for some $\mathtt{i}\in\underline{\cA}^{\mathrm{op}}$ and 
$\mathrm{E}\in\underline{\cA}^{\mathrm{op}}(\mathtt{i},\mathtt{i})$ such that $\mathrm{E}^2\cong\mathrm{E}$. 
From the definition of $\underline{\cA}^{\mathrm{op}}$ we have that any such $\mathrm{E}$ comes from an
idempotent endomorphism of an object $\mathbf{Q}\in\cP$. Since $\underline{\cA}^{\mathrm{op}}$ is finitary
(in particular, $\mathbbm{1}_{\mathtt{i}}$ is indecomposable),
$\mathbf{Q}$ has the form $\mathbf{P}^{\ccC}_{\mathtt{j},\mathrm{F}}$ for some
$\mathtt{j}\in\cC$ and $\mathrm{F}^2\cong\mathrm{F}\in\cC(\mathtt{j},\mathtt{j})$.
From Corollary~\ref{cor437}\eqref{cor437.3} it follows that $\mathrm{E}$ comes from some
$\mathrm{H}^2\cong\mathrm{H}\in \mathrm{F}\circ\cC(\mathtt{j},\mathtt{j})\circ\mathrm{F}$.
Consequently, $\mathrm{Hom}_{\ccC}(\cP,\mathbf{P}^{\ccC}_{\mathtt{j},\mathrm{H}})\cong 
\mathbf{P}^{\underline{\ccA}^{\mathrm{op}}}_{\mathtt{i},\mathrm{E}}$ and hence $\mathrm{Hom}_{\ccC}(\cP,{}_-)$
is surjective on equivalence classes of objects.

\subsection{The implication \eqref{thmmain.2}$\Rightarrow$\eqref{thmmain.3}}\label{s3.6}

We start with the following observation (recall that the definition of the essential $2$-subcategories
${}^{\dagger}\hspace{-1mm}\cA$ and ${}^{\dagger}\cC$ is in Section~\ref{s2.8}).

\begin{lemma}\label{lem531}
Under assumption \eqref{thmmain.2} the $2$-categories ${}^{\dagger}\hspace{-1mm}\cA$ and 
${}^{\dagger}\cC$ are biequivalent.
\end{lemma}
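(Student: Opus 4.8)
The plan is to recover, from the biequivalence $\cC\text{-}\mathrm{proj}\simeq\cA\text{-}\mathrm{proj}$, an intrinsic (hence biequivalence-invariant) copy of the essential $2$-subcategory, namely the full $2$-subcategory spanned by the \emph{maximal} indecomposable projective $2$-representations, where ``maximal'' refers to the preorder ``being a retract of''. By Proposition~\ref{prop123} together with the cancellative envelope of Subsection~\ref{s2.6}, which has the same objects as $\cC$ and hence the same relation $\preceq$ and the same ${}^{\dagger}\cC$, I may assume throughout that $\cA$ and $\cC$ are cancellative. Write $\mathbf{X}\le\mathbf{Y}$ if $\mathbf{X}$ is a retract of $\mathbf{Y}$.

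First I would set up the dictionary between $\preceq$ and the principal $2$-representations. Since $\mathbbm{1}_{\mathtt{i}}$ is indecomposable, \eqref{eq3} gives $\mathrm{End}_{\ccC}(\mathbb{P}_{\mathtt{i}})\cong\cC(\mathtt{i},\mathtt{i})$ with indecomposable identity, so each $\mathbb{P}_{\mathtt{i}}$ is indecomposable; unwinding \eqref{eq3} further, a retraction of $\mathbb{P}_{\mathtt{i}}$ off $\mathbb{P}_{\mathtt{j}}$ amounts to $1$-morphisms $\Phi\in\cC(\mathtt{j},\mathtt{i})$, $\Psi\in\cC(\mathtt{i},\mathtt{j})$ with $\Phi\circ\Psi\cong\mathbbm{1}_{\mathtt{i}}$, so that $\mathbb{P}_{\mathtt{i}}\le\mathbb{P}_{\mathtt{j}}$ if and only if $\mathtt{i}\preceq\mathtt{j}$. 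Moreover, by Theorem~\ref{thm412} and Proposition~\ref{prop441}, every indecomposable projective is equivalent to some $\mathbf{P}_{\mathtt{j},\mathrm{E}}$ and satisfies $\mathbf{P}_{\mathtt{j},\mathrm{E}}\le\mathbb{P}_{\mathtt{j}}$.

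Next I would prove the two-sided characterisation: the maximal indecomposable projectives are, up to equivalence, exactly the $\mathbb{P}_{\mathtt{i}}$ with $\mathtt{i}$ maximal for $\preceq$. For the easy inclusion, if $\mathtt{i}$ is maximal and $\mathbb{P}_{\mathtt{i}}\le\mathbf{Q}$ with $\mathbf{Q}$ indecomposable projective, then $\mathbf{Q}\le\mathbb{P}_{\mathtt{j}}$ for some $\mathtt{j}$, whence $\mathtt{i}\preceq\mathtt{j}$, whence (maximality) $\mathtt{j}\preceq\mathtt{i}$, i.e.\ $\mathbb{P}_{\mathtt{j}}\le\mathbb{P}_{\mathtt{i}}$, so $\mathbf{Q}\le\mathbb{P}_{\mathtt{i}}$ and $\mathbb{P}_{\mathtt{i}}$ is maximal. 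The substantive direction is to show that a maximal indecomposable projective $\mathbf{P}_{\mathtt{j},\mathrm{E}}$ is principal. Here maximality forces $\mathbb{P}_{\mathtt{j}}\le\mathbf{P}_{\mathtt{j},\mathrm{E}}$; composing such a retraction with the canonical inclusion and projection $\Lambda,\Lambda'$ of Proposition~\ref{prop441} produces $1$-morphisms $\mathrm{S}\cong\mathrm{S}\circ\mathrm{E}$ and $\mathrm{R}$ in $\cC(\mathtt{j},\mathtt{j})$ with $\mathrm{S}\circ\mathrm{R}\cong\mathbbm{1}_{\mathtt{j}}$. Writing $\mathrm{S}=\mathrm{A}\circ\mathrm{E}$ and passing to the associated non-negative integer matrices of Subsection~\ref{s4.3}, the relation $\mathrm{A}\circ\mathrm{E}\circ\mathrm{R}\cong\mathbbm{1}_{\mathtt{j}}$ becomes a product of non-negative integer square matrices equal to the identity; since such a product forces each factor to be a permutation matrix, $\mathrm{E}$ is invertible. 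As $\mathrm{E}=\mathrm{G}\circ\mathrm{G}\cong\mathrm{G}\oplus\mathrm{Q}$, invertibility forces $\mathrm{Q}=0$ and $\mathrm{G}\circ\mathrm{G}\cong\mathrm{G}$ with $\mathrm{G}$ invertible, hence $\mathrm{G}\cong\mathbbm{1}_{\mathtt{j}}$ and $\mathbf{P}_{\mathtt{j},\mathrm{E}}\cong\mathbb{P}_{\mathtt{j}}$; a final check as in the easy inclusion shows that $\mathtt{j}$ is then maximal for $\preceq$.

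Finally I would assemble the biequivalence. Let $\mathcal{D}_{\ccC}$ be the full $2$-subcategory of $\cC\text{-}\mathrm{proj}$ on the maximal indecomposable projectives. Using $\cP_{\ccC,\mathbb{P}}\simeq\cC^{\mathrm{op}}$ from Subsection~\ref{s3.1}, the assignment $\mathtt{i}\mapsto\mathbb{P}_{\mathtt{i}}$ together with the Yoneda equivalences $\cC(\mathtt{j},\mathtt{i})\cong\mathrm{Hom}_{\ccC}(\mathbb{P}_{\mathtt{i}},\mathbb{P}_{\mathtt{j}})$ gives a biequivalence $({}^{\dagger}\cC)^{\mathrm{op}}\to\mathcal{D}_{\ccC}$, with essential surjectivity provided by the characterisation above. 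Since indecomposability and the retract relation are categorical, the given biequivalence $\cC\text{-}\mathrm{proj}\to\cA\text{-}\mathrm{proj}$ carries maximal indecomposable projectives to maximal indecomposable projectives and hence restricts to a biequivalence $\mathcal{D}_{\ccC}\to\mathcal{D}_{\ccA}$. Stringing these together yields $({}^{\dagger}\cC)^{\mathrm{op}}\simeq({}^{\dagger}\hspace{-1mm}\cA)^{\mathrm{op}}$, and passing to opposite $2$-categories gives ${}^{\dagger}\cC\simeq{}^{\dagger}\hspace{-1mm}\cA$. \textbf{The main obstacle} is the substantive direction of the characterisation — that a maximal indecomposable projective must be principal — which rests on the fine structure of Subsection~\ref{s4.3} (in particular $\mathrm{E}=\mathrm{G}\circ\mathrm{G}$) and the combinatorics of non-negative integer matrices underlying Proposition~\ref{prop401}; one must take care that the retraction bookkeeping and the passage to matrices are valid in the $2$-categorical, cancellative setting.
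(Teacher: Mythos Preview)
Your proposal is correct and follows the same strategy as the paper: extend the retract preorder $\preceq$ to the indecomposable projective $2$-representations, identify the maximal elements as precisely the principal $2$-representations corresponding to $\preceq$-maximal objects, and then transport along the given biequivalence. The paper's proof is extremely terse here, merely asserting that ``maximal elements with respect to $\preceq$ will be principal''; your substantive contribution is a careful justification of exactly this assertion via the non-negative integer matrix combinatorics of Subsection~\ref{s4.3}, which is the natural way to flesh it out (the alternative ``mutual retracts of indecomposables are equivalent'' argument is not available, since $\cC\text{-}\mathrm{proj}$ need not be idempotent complete).
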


\begin{proof}
The order $\preceq$ extends in the obvious way to all $\mathbf{P}_{\mathtt{i},\mathrm{E}}$ in 
$\cA\text{-}\mathrm{proj}$ (and similarly for  $\cC\text{-}\mathrm{proj}$). Note that maximal elements
with respect to $\preceq$ will be principal. Any biequivalence between $\cA\text{-}\mathrm{proj}$ 
and $\cC\text{-}\mathrm{proj}$ maps maximal elements with respect to $\preceq$ (for $\cA$) to maximal elements
with respect to $\preceq$ (for $\cC$) and hence induces a biequivalence between
${}^{\dagger}\hspace{-1mm}\cA$ and ${}^{\dagger}\cC$.
\end{proof}

Now from Proposition~\ref{prop123} we have that ${}^{\dagger}\hspace{-1mm}\cA\text{-}\mathrm{afmod}$ 
and ${}^{\dagger}\cC\text{-}\mathrm{afmod}$ are biequivalent. The proof of the implication
\eqref{thmmain.2}$\Rightarrow$\eqref{thmmain.3} is now completed by the following:

\begin{proposition}\label{prop555}
The $2$-categories $\cC\text{-}\mathrm{afmod}$ and ${}^{\dagger}\cC\text{-}\mathrm{afmod}$ 
are biequivalent. 
\end{proposition}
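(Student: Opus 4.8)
The plan is to construct an explicit biequivalence between $\cC\text{-}\mathrm{afmod}$ and ${}^{\dagger}\cC\text{-}\mathrm{afmod}$ by exploiting the fact that the inclusion $\iota:{}^{\dagger}\cC\hookrightarrow\cC$ of the essential $2$-subcategory is itself a biequivalence of $2$-categories, after which Proposition~\ref{prop123} does most of the work. First I would verify that $\iota$ is a local equivalence and essentially surjective on objects. Essential surjectivity is the crux: I must show that every object $\mathtt{j}\in\cC$ is equivalent (in the sense of the relation $\preceq$, i.e.\ via $1$-morphisms $\Phi,\Psi$ with $\Phi\Psi\cong\mathbbm{1}$) to some object of ${}^{\dagger}\cC$. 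By definition ${}^{\dagger}\cC$ contains, for each $\preceq$-equivalence class, a maximal representative, so given $\mathtt{j}$ I pass to its class and to the chosen maximal object $\mathtt{i}\succeq\mathtt{j}$; the subtlety is that maximality must force an honest \emph{equivalence} of objects in $\cC$ rather than merely a one-sided domination. I would argue that if $\mathtt{i}\succeq\mathtt{j}$ with witnesses $\Phi\Psi\cong\mathbbm{1}_{\mathtt{j}}$, then maximality of $\mathtt{i}$ together with finitariness (indecomposability of identity $1$-morphisms and finite-dimensionality of $2$-hom spaces) yields that $\mathtt{i}$ and $\mathtt{j}$ are genuinely equivalent objects.

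Next I would check local equivalence: for $\mathtt{i},\mathtt{j}\in{}^{\dagger}\cC$ the inclusion induces $\cC(\mathtt{i},\mathtt{j})\to\cC(\mathtt{i},\mathtt{j})$, which is the identity and hence trivially an equivalence. Thus $\iota$ is a biequivalence. It is worth remarking that although biequivalences are generally only \emph{weakly} invertible, the abstract formulation of biequivalence recalled in Subsection~\ref{s2.5} (the data \eqref{eq1} and \eqref{eq2}) supplies a homomorphism $\cC\to{}^{\dagger}\cC$ inverse to $\iota$, which is exactly the hypothesis needed to invoke Proposition~\ref{prop123}.

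With $\iota:{}^{\dagger}\cC\to\cC$ established as a biequivalence, I would then apply Proposition~\ref{prop123} directly: since ${}^{\dagger}\cC$ and $\cC$ are biequivalent finitary $2$-categories, their $2$-categories of finitary additive $2$-representations ${}^{\dagger}\cC\text{-}\mathrm{afmod}$ and $\cC\text{-}\mathrm{afmod}$ are biequivalent. This completes the proof. Concretely, the biequivalence is realized by the restriction $2$-functor ${}_-\circ\iota:\cC\text{-}\mathrm{afmod}\to{}^{\dagger}\cC\text{-}\mathrm{afmod}$ together with its inverse ${}_-\circ\mathbf{G}$, where $\mathbf{G}:\cC\to{}^{\dagger}\cC$ is the inverse biequivalence of $\iota$, and the coherence data transport as in the proof of Proposition~\ref{prop123}.

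The main obstacle I anticipate is the essential-surjectivity argument, specifically turning the one-sided domination $\mathtt{i}\succeq\mathtt{j}$ coming from maximality into an actual equivalence of objects. The relation $\preceq$ is defined by existence of $\Phi,\Psi$ with $\Phi\Psi\cong\mathbbm{1}_{\mathtt{i}}$, which is a priori asymmetric, and one must rule out a strict domination surviving within a single $\preceq$-class; here I expect to use that $\mathbbm{1}_{\mathtt{j}}$ is indecomposable together with finitariness to control the possible $1$-morphisms and conclude that the composite $\Psi\Phi$, an idempotent-up-to-isomorphism endomorphism of $\mathbbm{1}_{\mathtt{j}}$, must itself be isomorphic to $\mathbbm{1}_{\mathtt{j}}$, giving the two-sided inverse that upgrades domination to equivalence.
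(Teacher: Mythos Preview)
Your proposed route has a genuine gap: the inclusion $\iota:{}^{\dagger}\cC\hookrightarrow\cC$ is \emph{not} a biequivalence in general, so you cannot simply invoke Proposition~\ref{prop123}. The whole point of Proposition~\ref{prop555} is precisely that passing to ${}^{\dagger}\cC$ loses objects of $\cC$ up to equivalence but does not lose anything at the level of $2$-representations; this is a nontrivial statement, not a formal consequence of a biequivalence on the nose.

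A concrete counterexample is Example~\ref{s7.1} (read with $\cA$ in the role of $\cC$). There $\cA$ has two objects that are explicitly shown to be inequivalent, yet one is a retract of the other, so ${}^{\dagger}\cA$ has a single object and $\iota$ is not essentially surjective. Your key step, namely upgrading $\mathtt{j}\preceq\mathtt{i}$ to an honest equivalence of objects, fails here: with $\Phi\Psi\cong\mathbbm{1}_{\mathtt{j}}$ the composite $\Psi\Phi$ is a weakly idempotent $1$-morphism in $\cC(\mathtt{i},\mathtt{i})$, but indecomposability of $\mathbbm{1}_{\mathtt{i}}$ does \emph{not} force $\Psi\Phi\cong\mathbbm{1}_{\mathtt{i}}$. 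In Example~\ref{s7.1} the relevant $\Psi\Phi$ is the idempotent $\mathrm{F}\not\cong\mathbbm{1}$. Maximality of $\mathtt{i}$ is of no help either: it only says nothing sits strictly above $\mathtt{i}$, not that everything below $\mathtt{i}$ is equivalent to it.

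The paper therefore argues differently. It works directly at the level of $2$-representations, pairing the restriction $2$-functor $\mathrm{Res}$ with the $2$-functor $\mathrm{Hom}_{{}^{\dagger}\ccC}(\cP,{}_-)$ built from the restrictions of principal $2$-representations of $\cC$. The retract data $\Phi\Psi\cong\mathbbm{1}_{\mathtt{i}}$ witnessing $\mathtt{i}\preceq\mathtt{j}$ is used not to produce an equivalence of objects in $\cC$, but to show that for any $2$-representation the values $\Lambda_{\mathtt{i}}$, $\alpha_{\mathtt{i}}$ at a non-maximal $\mathtt{i}$ are determined (up to isomorphism) by the values at the dominating maximal $\mathtt{j}$, via conjugation by $\mathbf{M}(\Psi)$ and $\mathbf{N}(\Phi)$. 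This yields local faithfulness and injectivity on isomorphism classes for both $\mathrm{Res}$ and its partner; finite-dimensionality of $2$-morphism spaces then closes the argument.
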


\begin{proof}
We have the restriction functor 
\begin{displaymath}
\mathrm{Res}:\cC\text{-}\mathrm{afmod}\to{}^{\dagger}\cC\text{-}\mathrm{afmod}. 
\end{displaymath}
Let $\cP$ be a full subcategory of ${}^{\dagger}\cC\text{-}\mathrm{proj}$ consisting of
restrictions to ${}^{\dagger}\cC$ of all principal $2$-representations of $\cC$.
As in the previous subsection we have the obvious $2$-functor
\begin{displaymath}
\mathrm{Hom}_{{}^{\dagger}\ccC}(\cP,{}_-):
{}^{\dagger}\cC\text{-}\mathrm{afmod}\to\cC\text{-}\mathrm{afmod}. 
\end{displaymath}
Moreover, it is easy to check that $\mathrm{Res}\circ\mathrm{Hom}_{{}^{\dagger}\ccC}(\cP,{}_-)$ is isomorphic to the
identity on ${}^{\dagger}\cC\text{-}\mathrm{afmod}$. In particular, 
$\mathrm{Hom}_{{}^{\dagger}\ccC}(\cP,{}_-)$ maps non-equivalent
$2$-representations of ${}^{\dagger}\cC$ to non-equivalent $2$-representations of $\cC$, moreover, it is 
locally faithful and locally injective on isomorphism classes of objects.

Let $\mathbf{M}$ and $\mathbf{N}$ be  $2$-representations of $\cC$. Let $\Lambda,\Lambda':\mathbf{M}\to \mathbf{N}$ 
be strong transformations and $\alpha:\Lambda\to\Lambda'$ a modification. 
Let $\mathtt{j}$ be an object of ${}^{\dagger}\cC$ and 
$\mathtt{i}$ be an object of $\cC\setminus{}^{\dagger}\cC$ such that $\mathtt{i}\preceq\mathtt{j}$. 
Following the notation of Subsection~\ref{s2.8}, we have the commutative (up to natural isomorphism) diagram
\begin{displaymath}
\xymatrix{ 
\mathbf{M}(\mathtt{j})\ar@/^/[rr]^{\mathbf{M}(\Phi)}
\dtwocell^{\Lambda_{\mathtt{j}}}_{\Lambda'_{\mathtt{j}}}{\alpha_{\mathtt{j}}}
&&\ar@/^/[ll]^{\mathbf{M}(\Psi)}\mathbf{M}(\mathtt{i})
\dtwocell^{\Lambda_{\mathtt{i}}}_{\Lambda'_{\mathtt{i}}}{\alpha_{\mathtt{i}}}\\
\mathbf{N}(\mathtt{j})\ar@/^/[rr]^{\mathbf{N}(\Phi)}&&\ar@/^/[ll]^{\mathbf{N}(\Psi)}\mathbf{N}(\mathtt{i})
} 
\end{displaymath}
from which we have isomorphisms
\begin{displaymath}
\xi_1:\Lambda_{\mathtt{i}}\to\mathbf{N}(\Phi)\Lambda_{\mathtt{j}}\mathbf{M}(\Psi)\quad\text{ and }\quad
\xi_2:\Lambda'_{\mathtt{i}}\to \mathbf{N}(\Phi)\Lambda'_{\mathtt{j}}\mathbf{M}(\Psi) .
\end{displaymath}
Furthermore,  we also have
\begin{displaymath}
\alpha_{\mathtt{i}}=\xi_2^{-1}\circ_1(\mathrm{id}_{\mathbf{N}(\Phi)}\circ_0 \alpha_{\mathtt{j}}\circ_0 
\mathrm{id}_{\mathbf{M}(\Psi)})\circ_1\xi_1.
\end{displaymath}
This proves that $\mathrm{Res}$ maps non-equivalent
$2$-representations of $\cC$ to non-equivalent $2$-representations of ${}^{\dagger}\cC$, moreover, it also
proves that $\mathrm{Res}$ is locally faithful and locally injective on isomorphism classes of objects.

Combined with the previous paragraph we have that 
\begin{itemize}
\item $\mathrm{Res}$ and  $\mathrm{Hom}_{{}^{\dagger}\ccC}(\cP,{}_-)$ induce mutually inverse 
bijections between equivalence classes of objects,
\item locally they induce mutually inverse bijections between isomorphism classes of $1$-morphisms,
\item locally they induce injections in both directions on the level of $2$-morphisms.
\end{itemize}
Since $2$-morphism spaces are finite dimensional, we obtain that the $2$-functors $\mathrm{Res}$ and 
$\mathrm{Hom}_{{}^{\dagger}\ccC}(\cP,{}_-)$ are mutually inverse biequivalences.
\end{proof}

\section{Examples}\label{s7}

\subsection{Morita equivalent but not biequivalent finitary $2$-categories}\label{s7.1}

Denote by $A$ the path algebra of the quiver $\xymatrix{1\ar[r]&2}$ over $\Bbbk$ and let
$\mathcal{C}$ be a small category equivalent to $A\text{-}\mathrm{proj}$. Denote by $\mathrm{F}$ the
endofunctor of $\mathcal{C}$ given by tensoring with $Ae_{2}\otimes_{\Bbbk}e_{2}A$. Clearly,
$\mathrm{F}^2\cong \mathrm{F}$. Consider the $2$-category $\cC$ defined as follows: 
\begin{itemize}
\item $\cC$ has one object $\mathtt{i}$ (which we identify with $\mathcal{C}$);
\item $1$-morphisms in $\cC$ are all functors which are isomorphic to a direct sum of copies of
$\mathrm{F}$ and the identity functor $\mathbbm{1}_{\mathtt{i}}=\mathrm{Id}_{\mathcal{C}}$;
\item $2$-morphisms in $\cC$ are all natural transformations of functors.
\end{itemize}
Let $\underline{\cA}$ denote the full $2$-subcategory of $\cC\text{-}\mathrm{proj}$ with objects
$\mathbb{P}_{\mathtt{i}}$ and $\mathbf{P}_{\mathtt{i},\mathrm{F}}$. Set $\cA:=\underline{\cA}^{\mathrm{op}}$.
Then $\cA$ and $\cC$ are Morita equivalent by Theorem~\ref{thmmain}. On the other hand,
$\cA$ has two objects which are not equivalent (since the functors representing the actions of 
$\mathbbm{1}_{\mathtt{i}}$ and $\mathrm{F}$ on $\mathbf{P}_{\mathtt{i},\mathrm{F}}$ are isomorphic while 
 the functors representing the actions of $\mathbbm{1}_{\mathtt{i}}$ 
and $\mathrm{F}$ on $\mathbb{P}_{\mathtt{i}}$ are not isomorphic). At the same time $\cC$ has only one object.
Hence $\cA$ and $\cC$ are not biequivalent.

\subsection{Indecomposable non-idempotent $1$-morphisms which square to idempotent $1$-morphisms}\label{s7.2}

Take $A=\Bbbk\oplus\Bbbk\oplus\Bbbk$ over $\Bbbk$, let $\mathcal{D}$ be a small category equivalent to
$\Bbbk\text{-}\mathrm{mod}$ and $\mathcal{C}:=\mathcal{D}\oplus\mathcal{D}\oplus\mathcal{D}$ (which
is equivalent to  $A\text{-}\mathrm{proj}=A\text{-}\mathrm{mod}$). Denote by $\mathrm{E}$ the identity 
functor on $\mathcal{D}$ and let $\mathrm{F}$ and $\mathrm{K}$ be the endofunctors of $\mathcal{C}$ 
given by the matrices
\begin{displaymath}
\left(\begin{array}{ccc}0&\mathrm{E}&0\\0&\mathrm{E}&\mathrm{E}\\0&0&0\end{array}\right)\quad\text{and}\quad 
\left(\begin{array}{ccc}0&0&\mathrm{E}\\0&0&0\\0&0&0\end{array}\right),
\end{displaymath}
respectively. We have $\mathrm{K}^2=\mathrm{K}\mathrm{F}=\mathrm{F}\mathrm{K}=0$ and
$\mathrm{F}^2\cong \mathrm{F}\oplus\mathrm{K}$, from which it follows that $(\mathrm{F}\oplus\mathrm{K})^2\cong
\mathrm{F}\oplus\mathrm{K}$. Consider the $2$-category $\cC$ defined as follows: 
\begin{itemize}
\item $\cC$ has one object $\mathtt{i}$ (which we identify with $\mathcal{C}$);
\item $1$-morphisms in $\cC$ are all functors which are isomorphic to a direct sum of copies of
$\mathrm{F}$, $\mathrm{K}$ and the identity functor $\mathbbm{1}_{\mathtt{i}}=\mathrm{Id}_{\mathcal{C}}$;
\item $2$-morphisms in $\cC$ are given by scalar multiples of the identity natural transformations on 
$\mathrm{F}$, $\mathrm{K}$ and $\mathbbm{1}_{\mathtt{i}}$, extended additively to their direct sums.
\end{itemize}
The $1$-morphism $\mathrm{F}$ in $\cC$ is an indecomposable non-idempotent $1$-morphism which squares to 
an idempotent (but decomposable) $1$-morphism. 

\subsection{Morita equivalence classes for $2$-categories of projective functors for
finite dimensional algebras}\label{s7.3}

Let $A$ be a finite dimensional $\Bbbk$-algebra. Assume that $A\cong A_1\oplus A_2\oplus\dots\oplus A_k$
with $A_1,A_2,\dots,A_k$ connected (that is indecomposable as algebras).
Denote by $\cC_{A}$ the $2$-category defined as follows (compare  \cite[7.3]{MM}):
\begin{itemize}
\item objects are $\mathtt{1},\dots,\mathtt{k}$ where we identify $\mathtt{i}$ with some small category equivalent
to $A_i\text{-}\mathrm{proj}$;
\item $1$-morphisms are all additive functors from $A_i\text{-}\mathrm{proj}$ to $A_j\text{-}\mathrm{proj}$ 
isomorphic to direct sums of functors realized as tensoring with either projective $A_j\text{-}A_i$-bimodules
or, additionally, with the bimodule $A_i$ in case $i=j$;
\item $2$-morphisms are natural transformations of functors.
\end{itemize}
Note that, up to biequivalence, $\cC_{A}$ does not depend on the choice of small categories equivalent
to $A_i\text{-}\mathrm{proj}$ for $i=1,2,\dots,k$. The $2$-category $\cC_{A}$ is of particular interest, as in \cite[Theorem 13]{MM3} it was shown that fiat $2$-categories, which are ``simple'' in a certain sense, are constructed from these.

For $B\cong B_1\oplus B_2\oplus\dots\oplus B_m$ with  $B_1,B_2,\dots,B_m$ connected, write
$A\sim B$ provided that the following two conditions are satisfied:
\begin{itemize}
\item $m=k+1$, $A_1\cong B_1,A_2\cong B_2$,\dots, $A_k\cong B_k$ and 
$B_{m}\cong\Bbbk$;
\item there are idempotents $e,e'\in A$ such that $\dim eAe'=1$. 
\end{itemize}
Denote by $\approx$ the minimal equivalence relation (on the class of all finite dimensional $\Bbbk$-algebra)
containing both $\sim$ and the classical Morita equivalence relation for finite dimensional algebras.

\begin{theorem}\label{thm97}
Let $A$ and $B$ be two  finite dimensional $\Bbbk$-algebras. Then $\cC_{A}$ and $\cC_{B}$ are Morita equivalent
if and only if $A\approx B$.
\end{theorem}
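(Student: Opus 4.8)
The strategy is to reduce everything to the combinatorial description of projective $2$-representations from Theorem~\ref{thm412} together with the Morita criterion of Theorem~\ref{thmmain}, and then to match the two generators of the equivalence relation $\approx$ against the two operations that are allowed in a $2$-progenerator. The key observation is that $\cC_A$ is a $2$-category of projective functors, so its indecomposable $1$-morphisms $\mathrm{F}$ (other than the identities $\mathbbm{1}_{\mathtt{i}}$) are precisely the functors of tensoring with indecomposable projective bimodules, i.e.\ with $A_j e \otimes_{\Bbbk} e' A_i$ for primitive idempotents $e\in A_j$, $e'\in A_i$. The first step is therefore to describe $\mathrm{F}\circ\mathrm{F}$ for such a functor: tensoring $A_j e\otimes_{\Bbbk}e'A_i$ with itself over $A_i$ produces a summand whose multiplicity is governed by $\dim e' A_i e'$, and more generally the square of an indecomposable projective functor decomposes into projective functors with multiplicities recorded by dimensions of the relevant $eAe'$ spaces. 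Feeding this into Theorem~\ref{thm412}, I can read off which $\mathrm{E}=\mathrm{G}\circ\mathrm{G}$ actually occur and hence enumerate the indecomposable projective $2$-representations $\mathbf{P}_{\mathtt{i},\mathrm{E}}$.

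\textbf{The ``only if'' direction.} Suppose $\cC_A$ and $\cC_B$ are Morita equivalent. By Theorem~\ref{thmmain} this gives a biequivalence of $\cA:=\cC_A$-$\mathrm{proj}$ with $\cC_B$-$\mathrm{proj}$, and by Lemma~\ref{lem531} the essential $2$-subcategories ${}^{\dagger}\cC_A$ and ${}^{\dagger}\cC_B$ are biequivalent. The essential subcategory singles out the $\preceq$-maximal objects, which are exactly the principal $2$-representations $\mathbb{P}_{\mathtt{i}}$; an object $\mathtt{i}$ fails to be maximal precisely when there is a $1$-morphism realising $\mathbbm{1}_{\mathtt{i}}$ as a retract of a composite through another object, and for projective functors this forces the corresponding block $A_i$ to be a one-dimensional semisimple piece (i.e.\ $A_i\cong\Bbbk$) that is ``absorbed'' by some $e A e'$ of dimension one. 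This is exactly the data appearing in the definition of $\sim$: a block isomorphic to $\Bbbk$ together with idempotents $e,e'$ with $\dim eAe'=1$. Thus a biequivalence can only change $A$ by discarding/adjoining such $\Bbbk$-blocks, plus an underlying change that induces a biequivalence of the non-semisimple cores, which by the classification of $\cC_A$ up to biequivalence (and classical Morita theory, via Proposition~\ref{prop102}) amounts to classical Morita equivalence of the cores. Composing finitely many such moves gives $A\approx B$.

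\textbf{The ``if'' direction.} Conversely, it suffices to check that each generator of $\approx$ yields a Morita equivalence. Classical Morita equivalence $A\cong_{\mathrm{Mor}}B$ induces an equivalence $A\text{-}\mathrm{proj}\simeq B\text{-}\mathrm{proj}$ compatible with tensoring by projective bimodules, hence a biequivalence $\cC_A\to\cC_B$, which is in particular a Morita equivalence. For the relation $\sim$, where $B$ has one extra block $B_m\cong\Bbbk$ and there exist idempotents with $\dim eAe'=1$, I build an explicit $2$-progenerator $\cP$ for $\cC_B$ whose endomorphism $2$-category is biequivalent to $\cC_A^{\mathrm{op}}$, and invoke the implication \eqref{thmmain.1}$\Rightarrow$\eqref{thmmain.3} of Theorem~\ref{thmmain}. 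Concretely, $\cP$ consists of the principal $2$-representations over the blocks $\mathtt{1},\dots,\mathtt{k}$ together with a \emph{non-principal} projective $\mathbf{P}_{\mathtt{i},\mathrm{E}}$ accounting for the extra $\Bbbk$-block: the condition $\dim eAe'=1$ is exactly what guarantees that the relevant indecomposable projective functor $\mathrm{G}$ squares to $\mathrm{E}=\mathrm{G}\circ\mathrm{G}$ with $\mathrm{G}\circ\mathrm{Q}=\mathrm{Q}\circ\mathrm{G}=\mathrm{Q}\circ\mathrm{Q}=0$, so that $\mathbf{P}_{\mathtt{i},\mathrm{E}}$ is a legitimate projective $2$-representation by Proposition~\ref{prop441}, and its endomorphism category recovers the missing $\Bbbk$ via Corollary~\ref{cor437}\eqref{cor437.3}. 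Verifying that this $\cP$ is additive-generating and that $\mathrm{End}_{\ccC}(\cP)^{\mathrm{op}}\simeq\cC_A$ is then a bookkeeping computation.

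\textbf{Main obstacle.} The genuinely delicate point is the bookkeeping in the ``only if'' direction: translating the abstract biequivalence of essential subcategories into the precise numerical condition $\dim eAe'=1$. One must show that the only indecomposable projective $2$-representations that are \emph{not} principal yet can serve to toggle an object are those coming from a single simple block together with a one-dimensional morphism space; any larger morphism space would produce, via the square $\mathrm{G}\circ\mathrm{G}$, an $\mathrm{E}$ that does not split off a new indecomposable object in the required way, and conversely higher-dimensional or non-simple data cannot be absorbed while keeping $\mathbbm{1}_{\mathtt{i}}$ indecomposable. Pinning down this dichotomy — essentially classifying which weakly idempotent $\mathrm{E}=\mathrm{G}\circ\mathrm{G}$ arise in $\cC_A$ and matching them to algebraic invariants of $A$ — is where the real work lies, and it is here that the combinatorics of idempotent non-negative integer matrices (Proposition~\ref{prop401}) reappears at the level of the bimodule multiplicities.
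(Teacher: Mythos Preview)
Your overall plan correctly identifies the two key ingredients---the computation that $Ae'\otimes_{\Bbbk}eA$ is weakly idempotent iff $\dim eAe'=1$, and the Morita criterion via progenerators---but both directions as written have gaps.

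In the ``if'' direction you say you will build a progenerator $\cP$ for $\cC_B$ whose endomorphism $2$-category is $\cC_A^{\mathrm{op}}$, consisting of the principals for blocks $\mathtt{1},\dots,\mathtt{k}$ \emph{together with} a non-principal $\mathbf{P}_{\mathtt{i},\mathrm{E}}$. This is inconsistent: such a $\cP$ has $k+1$ inequivalent objects while $\cC_A$ has only $k$. Either you mean a progenerator for $\cC_A$ (principals plus one retract) with endomorphism category $\cC_B^{\mathrm{op}}$, or a progenerator for $\cC_B$ consisting of the principals for $\mathtt{1},\dots,\mathtt{k}$ only, with endomorphism category $\cC_A^{\mathrm{op}}$. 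The paper takes the second route, which is simpler: with $x$ the identity of $B_m$, the bimodules $Bx\otimes_{\Bbbk}eA$ and $Ae'\otimes_{\Bbbk}xB$ compose to $Bx\otimes_{\Bbbk}xB\cong B_m$ (using $\dim eAe'=1$), so $\mathbb{P}^{\ccC_B}_{\mathtt{m}}$ is already a retract of the remaining principals and no non-principal object is needed.

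In the ``only if'' direction, invoking Lemma~\ref{lem531} to get a biequivalence ${}^{\dagger}\cC_A\simeq{}^{\dagger}\cC_B$ is fine, but you then need the statement ``$\cC_{A'}$ biequivalent to $\cC_{B'}$ implies $A'$ and $B'$ are classically Morita equivalent,'' which you invoke as known but which is not proved in this paper and is not immediate (for instance $\mathrm{End}(\mathbbm{1}_{\mathtt{i}})$ only recovers the centre $Z(A_i)$). The paper sidesteps this by arguing directly: it classifies \emph{all} weakly idempotent indecomposable $1$-morphisms in $\cC_A$ (they are exactly the $Ae'\otimes_{\Bbbk}eA$ with $\dim eAe'=1$), observes that any such idempotent factors through $\Bbbk\text{-}\mathrm{proj}$, and concludes that adjoining the corresponding retract to the principals yields precisely $\cC_{A\oplus\Bbbk}^{\mathrm{op}}$. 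Hence every step of adding or removing a retract is an instance of $\sim$, and no external biequivalence classification of the $\cC_A$'s is needed. Your final paragraph in fact locates exactly this computation as ``where the real work lies''; once you carry it out, the detour through essential subcategories becomes unnecessary.
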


\begin{proof}
We first note that the $2$-category $\cC_{A}$ is, clearly, independent of the choice
of $A$ within its Morita equivalence class. Hence, to prove sufficiency 
it is enough to show that $A\sim B$ implies Morita equivalence of $\cC_{A}$ and $\cC_{B}$.
Let $\mathbb{P}^{\ccC_{B}}_{\mathtt{m}}$ be the principal $2$-representation of $B$ associated to $B_m$.
We identify $A$ with the subalgebra $B_1\oplus\dots\oplus B_k$ of $B$.
Let $x$ denote the identity in $B_m$. Then $Bx\otimes_{\Bbbk}eA$ and $Ae'\otimes_{\Bbbk}xB$ are $1$-morphisms in 
$\cC_B$. As $\dim eAe'=1$, we have
\begin{displaymath}
Bx\otimes_{\Bbbk}eA\otimes_B Ae'\otimes_{\Bbbk}xB \cong Bx\otimes_{\Bbbk}xB^{\oplus \dim eAe'}\cong B_m. 
\end{displaymath}
This implies that $\mathbb{P}^{\ccC_{B}}_{\mathtt{m}}$ is a retract of 
$\mathbb{P}^{\ccC_{B}}_{\mathtt{1}}\oplus\dots\oplus\mathbb{P}^{\ccC_{B}}_{\mathtt{k}}$. By our choice of $A$
we have that the endomorphism $2$-category of $\mathbb{P}^{\ccC_{B}}_{\mathtt{1}},
\mathbb{P}^{\ccC_{B}}_{\mathtt{2}},\dots,\mathbb{P}^{\ccC_{B}}_{\mathtt{k}}$ 
is biequivalent to the endomorphism $2$-category of
$\mathbb{P}^{\ccC_{A}}_{\mathtt{1}},\mathbb{P}^{\ccC_{A}}_{\mathtt{2}},\dots,\mathbb{P}^{\ccC_{A}}_{\mathtt{k}}$. 
The claim follows.
 
To prove necessity, let us analyze idempotent $1$-morphisms in $\cC_A$. Let $e,e'$ be primitive idempotents
in $A$ and $Ae'\otimes_{\Bbbk}eA$ the corresponding projective bimodule. We have
\begin{equation}\label{eq976}
Ae'\otimes_{\Bbbk}eA\otimes_A Ae'\otimes_{\Bbbk}eA\cong Ae'\otimes_{\Bbbk}eA ^{\oplus \dim eAe'}
\end{equation}
and hence
\begin{multline*}
(Ae'\otimes_{\Bbbk}eA\otimes_A Ae'\otimes_{\Bbbk}eA)\otimes_A 
(Ae'\otimes_{\Bbbk}eA\otimes_A Ae'\otimes_{\Bbbk}eA)\\\cong 
(Ae'\otimes_{\Bbbk}eA\otimes_A Ae'\otimes_{\Bbbk}eA) ^{\oplus (\dim eAe')^2}.
\end{multline*}
This implies that $Ae'\otimes_{\Bbbk}eA\otimes_A Ae'\otimes_{\Bbbk}eA$ is idempotent if and only if
$\dim eAe'=1$. Note that, by \eqref{eq976}, the latter is equivalent to $Ae'\otimes_{\Bbbk}eA$ being idempotent.
By Theorem~\ref{thmmain}, the Morita equivalence class of a finitary $2$-category is obtained by adding or
removing retracts of indecomposable principal $2$-representations. Let $\mathbf{P}$ be a retract corresponding to an idempotent $Ae'\otimes_{\Bbbk}eA$. Since $Ae'\otimes_{\Bbbk}eA$ is idempotent, it factors through 
$\Bbbk\text{-}\mathrm{proj}$.  This implies that 
the endomorphism $2$-category of $\mathbb{P}_{\mathtt{1}},\mathbb{P}_{\mathtt{2}},\dots, \mathbb{P}_{\mathtt{k}},
\mathbf{P}$ is biequivalent to $\cC_B$ where $B=A\oplus \Bbbk$. This completes the proof.
\end{proof}

\subsection{$2$-categories of Soergel bimodules}\label{s7.4}

Let $(W,S)$ be a finite Coxeter system and $\mathbf{C}_W$ the corresponding coinvariant algebra (see \cite{Wi}
for details). Let $\cS_{(W,S)}$ be a $2$-category of Soergel $\mathbf{C}_W\text{-}\mathbf{C}_W$-bimodules for 
$(W,S)$ as defined in \cite[Subsection~7.1]{MM} or \cite[Example~3]{MM2} (in those papers $W$ is assumed to be a 
Weyl group, however, our more general assumption works just fine, see \cite{EW,Wi}). 
The $2$-category $\cS_{(W,S)}$ is usually described using its defining $2$-representation:
\begin{itemize}
\item $\cS_{(W,S)}$ has one object which is identified with some small category $\mathcal{A}$ equivalent to
$\mathbf{C}_W\text{-}\mathrm{mod}$;
\item $1$-morphisms of $\cS_{(W,S)}$ are endofunctors of $\mathcal{A}$ isomorphic to 
direct sums of endofunctors given by tensoring with Soergel bimodules;
\item $2$-morphisms of $\cS_{(W,S)}$ are natural transformations of functors.
\end{itemize}
Clearly, up to biequivalence, $\cS_{(W,S)}$ does not depend on the choice of $\mathcal{A}$.

\begin{proposition}\label{prop983}
Let $(W,S)$ and $(W',S')$ be finite Coxeter systems. Then the $2$-categories 
$\cS_{(W,S)}$ and $\cS_{(W',S')}$ are Morita equivalent if and only if $(W,S)$ and $(W',S')$ are isomorphic.
\end{proposition}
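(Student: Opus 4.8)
The plan is to reduce Morita equivalence to biequivalence and then to reconstruct the Coxeter data from the monoidal structure of Soergel bimodules. Since $\cS_{(W,S)}$ has a single object, that object is trivially maximal with respect to the relation $\preceq$ of Subsection~\ref{s2.8}, so the essential $2$-subcategory ${}^{\dagger}\cS_{(W,S)}$ equals $\cS_{(W,S)}$ itself. Now Theorem~\ref{thmmain}, read through Lemma~\ref{lem531} and Propositions~\ref{prop555} and~\ref{prop123}, shows that two finitary $2$-categories are Morita equivalent if and only if their essential $2$-subcategories are biequivalent. Hence $\cS_{(W,S)}$ and $\cS_{(W',S')}$ are Morita equivalent if and only if they are biequivalent. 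The implication ``$(W,S)\cong(W',S')$ $\Rightarrow$ Morita equivalent'' is then immediate, since isomorphic Coxeter systems visibly yield identically defined, hence biequivalent, $2$-categories. It remains to recover $(W,S)$ from the biequivalence class of $\cS_{(W,S)}$.

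So fix a biequivalence $\mathbf{H}:\cS_{(W,S)}\to\cS_{(W',S')}$. Being a local equivalence compatible with horizontal composition and preserving identity $1$-morphisms, $\mathbf{H}$ amounts to a monoidal equivalence between the categories of Soergel bimodules sending $\mathbbm{1}_{\mathtt{i}}$ to $\mathbbm{1}_{\mathtt{i}'}$. The indecomposable $1$-morphisms of $\cS_{(W,S)}$ are the indecomposable Soergel bimodules $\mathrm{B}_w$, $w\in W$, with $\mathrm{B}_e=\mathbbm{1}_{\mathtt{i}}$, so $\mathbf{H}$ induces a bijection $\mathrm{B}_w\mapsto\mathrm{B}'_{w'}$ preserving units, together with an isomorphism of the split Grothendieck rings (with multiplication induced by $\circ$) matching the distinguished bases of indecomposables. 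First I recover $S$: I claim that, for $w\neq e$, one has $\mathrm{B}_w\circ\mathrm{B}_w\cong\mathrm{B}_w\oplus\mathrm{B}_w$ if and only if $w\in S$. For $w\in S$ this is the standard isomorphism $\mathrm{B}_s\circ\mathrm{B}_s\cong\mathrm{B}_s\oplus\mathrm{B}_s$, coming from $\mathbf{C}_W$ being free of rank two over its $s$-invariants. For the converse, consider the map $r$ sending the class of a Soergel bimodule to its rank as a free left $\mathbf{C}_W$-module; since $\mathrm{X}\circ\mathrm{Y}$ is, as a left module, a direct sum of $r(\mathrm{Y})$ copies of $\mathrm{X}$, the map $r$ is a ring homomorphism from the split Grothendieck ring to $\mathbb{Z}$. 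Applying $r$ to $\mathrm{B}_w\circ\mathrm{B}_w\cong\mathrm{B}_w\oplus\mathrm{B}_w$ forces $r(\mathrm{B}_w)=2$; on the other hand the standard filtration of $\mathrm{B}_w$ gives $r(\mathrm{B}_w)=\sum_{x\le w}h_{x,w}(1)$, which equals $2$ exactly when $\ell(w)=1$ (it is $1$ for $w=e$ and at least $4$ once $\ell(w)\ge 2$). As the squaring condition is manifestly monoidal, $\mathbf{H}$ restricts to a bijection $\phi\colon S\to S'$.

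Finally I recover the Coxeter matrix and conclude. For distinct $s,t\in S$ the additive, idempotent-complete monoidal subcategory generated by $\{\mathrm{B}_s,\mathrm{B}_t\}$ is the parabolic Soergel subcategory of the dihedral subsystem, whose indecomposable objects are exactly $\{\mathrm{B}_w:w\in W_{\{s,t\}}\}$, of which there are $|W_{\{s,t\}}|=2m_{st}$ (finite, as $W$ is). This number is preserved by the monoidal equivalence $\mathbf{H}$, which carries $\{\mathrm{B}_s,\mathrm{B}_t\}$ to $\{\mathrm{B}'_{\phi(s)},\mathrm{B}'_{\phi(t)}\}$; hence $m_{st}=m'_{\phi(s)\phi(t)}$ for all $s\neq t$. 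A bijection of simple reflections preserving the Coxeter matrix is an isomorphism of Coxeter systems, so $(W,S)\cong(W',S')$, as required.

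The reduction in the first paragraph is routine. The crux, and the main obstacle, is verifying the two Soergel-theoretic characterizations for an arbitrary finite Coxeter system (not merely a Weyl group): the identity $r(\mathrm{B}_w)=\sum_{x\le w}h_{x,w}(1)$ together with the equivalence $r(\mathrm{B}_w)=2\Leftrightarrow\ell(w)=1$, and the description of the dihedral parabolic subcategories. Both rest on the general theory of Soergel bimodules as developed in \cite{EW,Wi}, and one must take care throughout that each characterization is phrased purely in monoidal terms so that it is transported by $\mathbf{H}$.
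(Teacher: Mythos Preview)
Your proof is correct, but both halves take a different route from the paper.

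For the reduction from Morita equivalence to biequivalence, the paper establishes a separate fact (its ``Observation~2''): the only weakly idempotent $1$-morphism in $\cS_{(W,S)}$ is the identity. Combined with Theorem~\ref{thm412}, this means every projective $2$-representation is a direct sum of copies of the unique principal one, so any $2$-progenerator has endomorphism $2$-category biequivalent to $\cS_{(W,S)}$ itself, and Theorem~\ref{thmmain}\eqref{thmmain.1} then forces biequivalence. Your route via Lemma~\ref{lem531} is slicker: since there is only one object it is automatically $\preceq$-maximal, so ${}^{\dagger}\cS_{(W,S)}=\cS_{(W,S)}$, and Lemma~\ref{lem531} gives the biequivalence directly, with no analysis of idempotent $1$-morphisms at all. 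You get the reduction for free from the general machinery.

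For the reconstruction of $(W,S)$ from the biequivalence class, the paper's argument is considerably simpler than yours. It merely observes that a biequivalence induces an isomorphism between the $2$-endomorphism algebras of the respective identity $1$-morphisms, and by construction these are the coinvariant algebras $\mathbf{C}_W$ and $\mathbf{C}_{W'}$; one then checks that $\mathbf{C}_W\cong\mathbf{C}_{W'}$ forces $(W,S)\cong(W',S')$. This needs nothing beyond the definition of $\cS_{(W,S)}$ and an elementary fact about coinvariant algebras. Your monoidal reconstruction---recovering $S$ via the condition $\mathrm{B}_w\circ\mathrm{B}_w\cong\mathrm{B}_w\oplus\mathrm{B}_w$ and the Coxeter matrix via counting indecomposables in dihedral parabolic subcategories---is correct, but it leans on the full Soergel--Elias--Williamson theory (indexing of indecomposables by $W$, the rank formula via Kazhdan--Lusztig polynomials, closure of parabolic subcategories). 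So you trade a harder reduction step for an easier one, but pay for it with a substantially heavier reconstruction step.
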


\begin{proof}
The ``if'' part is obvious. The ``only if'' part follows from Theorem~\ref{thmmain} and the following two
observations.

{\em Observation 1. The $2$-categories $\cS_{(W,S)}$ and $\cS_{(W',S')}$ are biequivalent if and only if
$(W,S)$ and $(W',S')$ are isomorphic.} The ``if'' part is again obvious. To prove the ``only if'' part, 
note that any biequivalence between 
$\cS_{(W,S)}$ and $\cS_{(W',S')}$ induces an isomorphism between the $2$-endomorphism algebras of the
identity $1$-morphisms in $\cS_{(W,S)}$ and $\cS_{(W',S')}$. By definition, these endomorphism algebras
are isomorphic to $\mathbf{C}_W$ and $\mathbf{C}_{W'}$, respectively. Finally, it is easy to check that
$\mathbf{C}_W\cong\mathbf{C}_{W'}$ if and only if $(W,S)\cong(W',S')$.

{\em Observation 2. The only weakly idempotent $1$-morphism in $\cS_{(W,S)}$ is the identity $1$-morphism.} 
Tensoring with a Soergel bimodule is a non-zero and exact endofunctor of $\mathcal{A}$ and the latter category
has only one simple object up to isomorphism (call it $L$). Let $\mathrm{G}$ be an indecomposable
$1$-morphism in $\cS_{(W,S)}$ such that $\mathrm{F}:=\mathrm{G}\circ\mathrm{G}\cong \mathrm{G}\oplus\mathrm{Q}$
is weakly idempotent. Let $m$ be the length of $\mathrm{F}(L)\cong\mathrm{F}^2(L)$. Then $m=m^2$ and hence
$m=1$ (since $\mathrm{G}$ is nonzero). It follows that $\mathrm{Q}=0$ and thus $\mathrm{F}=\mathrm{G}$.
In particular, $\mathrm{F}$ is indecomposable and hence corresponds to some element $w\in W$ and, moreover,
$\mathrm{F}(L)\cong L$. In the natural graded picture $\mathrm{F}(L)$ is a graded self-dual vector space
with non-zero components in degrees $\pm$ the length of $w$. Therefore $\mathrm{F}(L)\cong L$ implies
that the length of $w$ equals zero and hence $w$ must coincide with the identity element. Therefore 
$\mathrm{F}$ is isomorphic to the identity $1$-morphism. This completes the proof.
\end{proof}

%\vspace{1cm}

\noindent
Volodymyr Mazorchuk, Department of Mathematics, Uppsala University,
Box 480, 751 06, Uppsala, SWEDEN, {\tt mazor\symbol{64}math.uu.se};
http://www.math.uu.se/$\tilde{\hspace{1mm}}$mazor/.

%\vspace{0.1cm}
\noindent
Vanessa Miemietz, School of Mathematics, University of East Anglia,
Norwich, UK, NR4 7TJ, {\tt v.miemietz\symbol{64}uea.ac.uk};
http://www.uea.ac.uk/$\tilde{\hspace{1mm}}$byr09xgu/.

\end{document}